\pgfplotsset{compat=1.18, width=12cm, height=8cm}
\definecolor{deepblue}{RGB}{0,74,155}
\definecolor{crimson}{RGB}{178,34,52}
\newcommand{\BMO}{\mathrm{BMO}}
\newtheorem{theorem}{Theorem}[section]
\newtheorem{proposition}[theorem]{Proposition}
\newtheorem{lemma}[theorem]{Lemma}
\newtheorem{corollary}[theorem]{Corollary}
\newtheorem{definition}[theorem]{Definition}
\newtheorem{remark}[theorem]{Remark}
\numberwithin{equation}{section}
\def\rn{{\mathbb R^n}}
\def\cc{{\mathbb C}}
\def\BMO{{\rm BMO}}
\def\B{{\mathscr B}}
\def\M{{\mathscr M}}
\def\I{{\mathscr I}}
\def\T{{\mathscr T}}
\def\B{{\mathscr B}}
\def\A{{\mathscr A}}
\def\S{{\mathscr S}}
\def\F{{\mathscr F}}
\DeclareMathAlphabet
{\mathpzc}{OT1}{pzc}{m}{it}
\newcommand{\dada}{\beta}
\def\TB{{\mathcal B}_{\eta ,\S,\tau,\tau',{\vec r},s'}^\mathbf{b,k}}
\newcommand{\mici}{\eta}
\newcommand{\lla}{\left\langle}
\newcommand{\rra}{\right\rangle}
\newcommand{\laa}{\big\langle}
\newcommand{\raa}{\big\rangle}
\newcommand{\laaa}{\Big\langle}
\newcommand{\raaa}{\Big\rangle}
\def\Xint1{\mathchoice
   {\XXint\displaystyle\textstyle{1}}%
   {\XXint\textstyle\scriptstyle{1}}%
   {\XXint\scriptstyle\scriptscriptstyle{1}}%
   {\XXint\scriptscriptstyle\scriptscriptstyle{1}}%
   \!\int}
\def\XXint123{{\setbox0=\hbox{$1{23}{\int}$}
     \vcenter{\hbox{$23$}}\kern-.5\wd0}}
\def\vf{\mathbf{f}}
\newcommand{\aver}[1]{-\hskip-0.46cm\int_{1}}
\newcommand{\textaver}[1]{-\hskip-0.40cm\int_{1}}
\def\d{\mathcal{D}}
\def\F{\mathcal{F}}
\def\N{\mathbb{N}}
\def\Z{\mathbb{Z}}
\def\R{\mathbb{R}}
\def\S{\mathcal{S}}
\def\BMO{\operatorname{BMO}}
\newcommand{\norm}[1]{\left\lVert1\right\rVert}
\newcommand\restr[2]{\ensuremath{\left.1\right|_{2}}}
\def\avint_#1{\mathchoice{\mathop{\kern 0.2em\vrule width 0.6em height 0.69678ex depth -0.58065ex \kern -0.8em \intop}\nolimits_{\kern -0.4em#1}}{\mathop{\kern 0.1em\vrule width 0.5em height 0.69678ex depth -0.60387ex \kern -0.6em \intop}\nolimits_{#1}} {\mathop{\kern 0.1em\vrule width 0.5em height 0.69678ex depth -0.60387ex \kern -0.6em \intop}\nolimits_{#1}} {\mathop{\kern 0.1em\vrule width 0.5em height 0.69678ex depth -0.60387ex \kern -0.6em \intop}\nolimits_{#1}}}
\begin{document}
\title[The multilinear fractional sparse operator theory ...]
{\bf The multilinear fractional sparse operator theory II: refining weighted estimates via multilinear fractional sparse forms}

\author[X. Cen]{Xi Cen}
\address{Xi Cen\\
School of Science\\
China University of Mining and
Technology (Beijing)\\
Beijing 100083 \\
People's Republic of China}\email{xicenmath@gmail.com}

\date{February 24, 2025.}

\subjclass[2020]{42B20, 42B25, 47B47, 35J05.}

\keywords{Multilinear fractional sparse form, sparse domination, Calder\'on-Zygmund operator, higher-order commutator, Space of homogeneous type, Fractional Laplacian equations.}


\begin{abstract} 

This paper refines the main results from our previous study on sparse bounds of generalized commutators of multilinear fractional singular integral operators in \cite{CenSong2412}. The key improvements are:
\begin{itemize}
\item We replace pointwise domination with the $(m+1)$-linear fractional sparse form ${\mathcal A}_{\eta,\mathcal{S},\tau,{\vec{r}},s'}^\mathbf{b,k,t}$, advancing the vector-valued multilinear fractional sparse form domination principle, and relax conditions from multilinear weak type boundedness to multilinear locally weak type boundedness $W_{\vec{p}, q}(X)$.
\item We introduce a multilinear fractional $\vec{r}$-type maximal operator $\mathscr{M}_{\eta,\vec{r}}$ and develop a new class of weights $A_{(\vec{p},q),(\vec{r}, s)}(X)$ to characterize it, establishing norm equivalence with the sparse forms.
\item This norm equivalence provides sharp quantitative weighted estimates for $(m+1)$-linear fractional sparse form, removing exponent parameter limitations and achieving sharp operator norm bounds.
\item We demonstrate applications in two ways:
  \begin{itemize}
  \item Providing sharp or Bloom type estimates for generalized commutators of multilinear fractional Calderón--Zygmund operators and multilinear fractional rough singular integral operators.
  \item Investigating sparse form type weighted Lebesgue $L^p(\omega)$ and weighted Sobolev $W^{s,p}(\omega)$ regularity estimates for solutions of fractional Laplacian equations with higher-order commutators.
  \end{itemize}
\end{itemize}

\end{abstract}

\maketitle
\tableofcontents
\section{\bf Introduction}
\subsection{Background}
~~

In \cite{CenSong2412}, we initially extend the sparse operator theory developed over the past decade. However, relying solely on those conclusions is far from sufficient, since only essential technical innovations can truly drive the advancement of sparse theory. The sparse form appears to be an effective structure, as it invariably emerges in proofs over the past ten years, always manifesting in the following three parts.
\begin{align*}
\left| {\left\langle {T(f),g} \right\rangle } \right|\lesssim \underline{\sum\limits_{Q \in \mathcal S}} \underline{{\mu {{(Q)}^{\eta  + 1}} {{\left\langle f \right\rangle }_{r,Q}}}} \cdot \underline{{{\left\langle g \right\rangle }_{t,Q}}}
\end{align*}

This leads that the starting point of this work is originated from the following. 

On the one hand, based on the experiences of outstanding predecessors, such as, Lerner, Li, Lorist, Moen,  Nieraeth, Pérez, Rivera-Rios, and others (c.f. \cite{Lorist2024,Lorist2020,Li2017,Nier2022,Nier2019,Nier2018,Moen2014,Hy.pz2013,Lerner2022,Lerner2019,Lerner2018,Lorist2019}), they provided the most classical approach: The final step of any sparse theory always depends on exploiting the sparsity of sparse cubes in conjunction with the weighted boundedness of the classical dyadic maximal operators, one obtains quantitative weighted estimates for the target operators. 

On the other hand, in \cite{CenSong2412}, we studied  the multilinear fractional sparse form domination principle by the {\tt higher-order multi-symbol $(m+1)$-linear fractional sparse operator} ${\mathscr A}_{\mici ,\S,\tau,{\vec r},s'}^\mathbf{b,k,t}$, which is defined by (in this paper)
\begin{align*}
&{\mathscr A}_{\mici ,\S,\tau,{\vec r},s'}^\mathbf{b,k,t}(\vec{f})(x)\\
&=\sum_{Q \in \mathcal{S}} \mu(Q)^{\eta} \prod_{i \in \tau}  \left|b_i(x) - b_{i,Q}\right|^{k_i - t_i} \langle\left|f_i (b_i - b_{i,Q})^{t_i}\right|\rangle_{r_i,Q} \prod_{j \in \tau^c} \langle\left|f_j\right|\rangle_{r_j,Q} \chi_Q(x).
\end{align*}
This operator always gives us some desired results, such as it always bounds the generalized commutators of the multilinear fractional singular integral operators.

Due to the duality of $L^p(\omega)$, we adopt the following more convenient sparse forms in this paper.



\begin{definition}\label{def.form}
Let $(X,d,\mu)$ be a space of homogenous type (see Sect.\ref{pre.} for its definition), $\eta \in [0,\infty)$, $ r_i, s' \in  [1,\infty)$, for every $i \in \tau \subseteq \tau_m:=\{1,\cdots,m\}$. Let $\mathcal{S}$ be a sparse family and $\mathbf{b} = (b_1, \ldots, b_m) \in (L_{loc}^1(X))^m$. Suppose that $\mathbf{t}$ and $\mathbf{k}$ are multi-indices with $\mathbf{t} \leq \mathbf{k}$. The {\tt higher-order multi-symbol ($m+1$)-linear fractional sparse form} is defined as
   \begin{align*}
{\mathcal A}_{\mici ,\S,\tau,{\vec r},s'}^\mathbf{b,k,t} (\vec f,g) &=\sum_{Q \in \mathcal{S}} \mu(Q)^{\mici + 1} \prod_{i \in \tau}\left\langle\left(\left|f_i (b_i - b_{i,Q})^{t_i}\right|\right)\right\rangle_{r_i,Q}\\ &\times\left\langle\left( \prod_{i \in \tau}  \left|b_i(x) - b_{i,Q}\right|^{k_i - t_i}\right)g \right\rangle_{s',Q} \prod_{j \in \tau^c} \langle\left|f_j\right|\rangle_{r_j,Q},
     \end{align*}

Meanwhile, given $\tau' \subseteq \tau$, we also define the {\tt  reducing  higher-order multi-symbol $(m+1)$-linear fractional sparse forms} by
\begin{align*}
   {\mathcal B}_{\eta ,\S,\tau,\tau',{\vec r},s'}^\mathbf{b,k} (\vec f,g) &= \sum_{Q \in \S} \mu(Q)^{\mici + 1}  
   \prod_{i_1 \in \tau'}\left\langle\left|b_{i_1}- b_{i_1,Q}\right|^{k_{i_1}} f_{i_1}\right\rangle_{r_{i_1},Q}  \\
   &\quad \times \left\langle \left(\prod_{i_2 \in \tau \backslash \tau'} \left|b_{i_2}- b_{i_2,Q}\right|^{k_{i_2}}\right) g \right\rangle_{s',Q}  
   \prod_{j \in (\tau')^c } \left\langle |f_j| \right\rangle_{r_j,Q}.
\end{align*}
\end{definition}

\begin{remark}
Compared with ${\mathcal A}_{\mici ,\S,\tau,{\vec r},s'}^\mathbf{b,k,t}$, the merit of ${\mathcal B}_{\eta ,\S,\tau,\tau',{\vec r},s'}^\mathbf{b,k}$ lies in the disappearance of the oscillation intrinsic factor ${\mathbf{t}}$. This means that once ${\mathcal A}_{\mici ,\S,\tau,{\vec r},s'}^\mathbf{b,k,t}$ and ${\mathcal B}_{\eta ,\S,\tau,\tau',{\vec r},s'}^\mathbf{b,k}$ are comparable, the oscillation intrinsic factor $\mathbf{t}$ cannot have any fundamental impact on the characterization of ${\mathcal A}_{\mici ,\S,\tau,{\vec r},s'}^\mathbf{b,k,t}$. 
\end{remark}

Therefore, we propose the following ideas. 

\begin{proposition}\label{reduce}
Under the same assumption of Definition \ref{def.form}, 
\begin{align*}
  {\mathcal A}_{\mici ,\S,\tau,{\vec r},s'}^\mathbf{b,k,t} (\vec f,g) \le \sum_{\tau' \subseteq \tau} {\mathcal B}_{\eta ,\S,\tau,\tau',{\vec r},s'}^\mathbf{b,k} (\vec f,g).
\end{align*}
\end{proposition}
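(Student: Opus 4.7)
The approach is a cube-by-cube comparison with a pointwise AM--GM decomposition carried out inside a dualized multi-variable integral. Since both $\mathcal{A}_{\alpha(\eta),\mathcal{S},\tau,\vec r,t}^{\mathbf{b},\mathbf{k},\mathbf{t}}$ and every summand $\mathcal{B}_{\alpha(\eta),\mathcal{S},\tau,\tau',\vec r,t}^{\mathbf{b},\mathbf{k}}$ on the right share the common factor $\mu(Q)^{\alpha(\eta)+1}\prod_{j\in\tau^{c}}\langle|f_j|\rangle_{r_j,Q}$ (using that $(\tau')^{c}=\tau^{c}\sqcup(\tau\setminus\tau')$), the proposition reduces to establishing, for each fixed $Q\in\mathcal{S}$ and with $B_i:=|b_i-b_{i,Q}|$, the inequality
\begin{equation*}
\prod_{i\in\tau}\langle|f_i|B_i^{t_i}\rangle_{r_i,Q}\Big\langle g\prod_{i\in\tau}B_i^{k_i-t_i}\Big\rangle_{t,Q} \le \sum_{\tau'\subseteq\tau}\prod_{i\in\tau'}\langle B_i^{k_i}|f_i|\rangle_{r_i,Q}\prod_{i\in\tau\setminus\tau'}\langle|f_i|\rangle_{r_i,Q}\Big\langle g\prod_{i\in\tau\setminus\tau'}B_i^{k_i}\Big\rangle_{t,Q}.
\end{equation*}

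The combinatorial engine is the weighted AM--GM bound $a^{s}b^{k-s}\le a^{k}+b^{k}$ for $a,b\ge 0$ and $0\le s\le k$. Applied for each $i\in\tau$ separately with $a=B_i(x_i)$ and $b=B_i(y)$ and then multiplied across $i$, it yields
$$\prod_{i\in\tau}B_i(x_i)^{t_i}B_i(y)^{k_i-t_i}\le\prod_{i\in\tau}\bigl[B_i(x_i)^{k_i}+B_i(y)^{k_i}\bigr]=\sum_{\tau'\subseteq\tau}\prod_{i\in\tau'}B_i(x_i)^{k_i}\prod_{i\in\tau\setminus\tau'}B_i(y)^{k_i},$$
so that for each $i\in\tau$ the entire power $B_i^{k_i}$ is routed either to the $f_i$-variable $x_i$ or to the $g$-variable $y$, mirroring exactly the structure of $\mathcal{B}_{\tau'}$.

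To transport this pointwise decomposition through the $L^{r_i}$- and $L^{t}$-averages without any constant loss, I would linearize each average by duality, $\langle F\rangle_{r,Q}=\sup\{\tfrac{1}{\mu(Q)}\int_Q Fh\,d\mu:h\ge 0,\;\|h\|_{L^{r'}(Q,d\mu/\mu(Q))}\le 1\}$, and then apply Fubini. The left-hand side becomes the supremum, over test-function tuples $(h_1,\ldots,h_{|\tau|},h')$ normalized in the appropriate dual spaces, of a single $(|\tau|+1)$-fold integral over $Q^{|\tau|+1}$ whose integrand contains exactly the product treated above. Inserting the pointwise AM--GM and product expansion into the integrand, pushing the finite sum past the supremum via $\sup\sum\le\sum\sup$, and noting that in each summand the resulting integral factorizes into independent one-variable integrals, the supremum over each $h_i$ (resp.\ $h'$) reproduces the corresponding $L^{r_i}$-average (resp.\ $L^{t}$-average) on the right-hand side. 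Summing over $Q\in\mathcal{S}$ then finishes the proof.

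The main obstacle is the non-trivial inner exponents $r_i,t\in[1,\infty)$: a naive pointwise bound $B_i^{t_i}\le 1+B_i^{k_i}$ pushed through each $L^{r}$-average via the triangle inequality would generate a double subset sum over $T,S\subseteq\tau$ with $4^{|\tau|}$ terms rather than the desired single sum $\sum_{\tau'\subseteq\tau}$ with $2^{|\tau|}$ terms. The duality/linearization step is exactly what prevents this over-counting, since it keeps the AM--GM splitting genuinely pointwise in the joint integral, so that for each coordinate $i\in\tau$ the factor $B_i^{k_i}$ is assigned coherently to one side rather than distributed independently between the $f_i$- and $g$-sides.
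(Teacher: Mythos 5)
Your proposal is correct and follows essentially the same route as the paper: both reduce the claim to a single-cube inequality (the paper's Lemma \ref{pre_1}), apply the pointwise bound $a^{t_i}b^{k_i-t_i}\le a^{k_i}+b^{k_i}$ coordinatewise inside a joint integral over $Q^{|\tau|+1}$, and expand the product into the sum over $\tau'\subseteq\tau$ so that each factor $|b_i-b_{i,Q}|^{k_i}$ is routed wholly to the $f_i$-side or the $g$-side. The only difference is bookkeeping: the paper realizes the product of averages as a mixed $L^{\vec r}_{\vec x}\big(L^t_y\big)$ norm of a tensor-product function and uses its subadditivity and factorization, whereas you linearize each average by duality and use Fubini plus H\"older — two equivalent ways of keeping the splitting genuinely pointwise in the joint variables.
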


We are grateful for the work of Lorist et al. \cite[Lemma 3.4]{Lorist2024} here, as they proposed a 1-symbol linear version of this result. We provide the corresponding ($m+1$)-linear  fractional  version as above. This result will be reduced to prove the following lemma.

\begin{lemma}\label{pre_1}
   Let  $1 \leq r_i, t \leq \infty$ with $i \in \tau$,  \(\mathbf{b} = (b_1, \ldots, b_{m}) \in (L_{loc}^1(X))^m\). Suppose that $\mathbf{t}$ and $\mathbf{k}$ are both multi-indexs with \(\mathbf{t} \leq \mathbf{k}\), given a cube $Q$, we write
   \begin{align*}
      C_{{\bf k}, {\bf t}}= \prod_{i \in \tau} \laa \left|f_i (b_i - b_{i,Q})^{t_i}\right|\raa_{r_i,Q}  \laaa \prod_{i \in \tau}  \left|b_i - b_{i,Q}\right|^{k_i - t_i}g\raaa_{t,Q}.
   \end{align*}
   Then we have
   \begin{align*}
      C_{{\bf k}, {\bf t}}\leq  \sum_{\tau' \subseteq \tau} \left( \prod_{i_1 \in \tau'} \left\langle \left| b_{i_1} - b_{i_1,Q} \right|^{k_{i_1}} f_{i_1} \right\rangle_{r_{i_1},Q} 
      \left\langle \prod_{i_2 \in \tau \backslash \tau'} \left| b_{i_2} - b_{i_2,Q} \right|^{k_{i_2}} g \right\rangle_{t,Q} 
      \prod_{i_3 \in \tau \backslash \tau'} \left\langle f_{i_3} \right\rangle_{r_{i_3},Q} \right).
   \end{align*}
\end{lemma}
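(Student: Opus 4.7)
The plan is to induct on $|\tau|$, with the key ingredient being a single-index (i.e., $|\tau|=1$) splitting that plays the role of the $1$-linear estimate in Lorist et al.~\cite[Lemma 3.4]{Lorist2024}. Concretely, I first establish that for any $b\in L^1_{\mathrm{loc}}(X)$, integers $0\leq t_0\leq k_0$, exponents $1\leq r,s\leq\infty$, functions $f,g$, and a cube $Q$,
\[
\laa |f|(b-b_Q)^{t_0}\raa_{r,Q}\,\laa |b-b_Q|^{k_0-t_0}g\raa_{s,Q} \leq \la f\ra_{r,Q}\,\laa |b-b_Q|^{k_0}g\raa_{s,Q} + \laa |b-b_Q|^{k_0}f\raa_{r,Q}\,\la g\ra_{s,Q}.
\]

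To prove this single-index inequality, I set $\phi:=|b-b_Q|$, factor $|f|\phi^{t_0} = |f|^{(k_0-t_0)/k_0}\bigl(|f|\phi^{k_0}\bigr)^{t_0/k_0}$ (the $|f|$-exponents and the $\phi$-exponents both summing correctly), raise to the $r$-th power, and apply H\"older's inequality on $Q$ with conjugate exponents $k_0/(k_0-t_0)$ and $k_0/t_0$ (the boundary cases $t_0\in\{0,k_0\}$ are trivial). This yields
\[
\laa |f|\phi^{t_0}\raa_{r,Q} \leq \la f\ra_{r,Q}^{(k_0-t_0)/k_0}\,\laa |f|\phi^{k_0}\raa_{r,Q}^{t_0/k_0},
\]
and symmetrically $\laa g\phi^{k_0-t_0}\raa_{s,Q}\leq \la g\ra_{s,Q}^{t_0/k_0}\,\laa |g|\phi^{k_0}\raa_{s,Q}^{(k_0-t_0)/k_0}$. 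Multiplying these two bounds and applying Young's inequality $A^\theta B^{1-\theta}\leq \theta A+(1-\theta)B$ with $\theta=(k_0-t_0)/k_0$, $A = \la f\ra_{r,Q}\laa|b-b_Q|^{k_0}g\raa_{s,Q}$, and $B=\laa|b-b_Q|^{k_0}f\raa_{r,Q}\la g\ra_{s,Q}$, delivers the claimed two-term upper bound.

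For the induction, the base case $\tau=\emptyset$ is tautological. Assuming the statement for all index sets of size $<|\tau|$, I fix any $i_0\in\tau$ and regroup $C_{\mathbf{k},\mathbf{t}}$ so that the two $i_0$-factors sit next to each other, with the remaining oscillations packaged into $\widetilde g := g\prod_{i\in\tau\setminus\{i_0\}}|b_i-b_{i,Q}|^{k_i-t_i}$. Applying the single-index bound to this $i_0$-pair (with $g$ replaced by $\widetilde g$) splits $C_{\mathbf{k},\mathbf{t}}$ into two pieces. In each piece, the surviving $i$-factors ($i\neq i_0$) together with the resulting $g$-side average reassemble into an expression of type $C_{\mathbf{k}',\mathbf{t}'}$ on the reduced index set $\tau\setminus\{i_0\}$, in which the role of $g$ is played either by $g$ itself (when $|b_{i_0}-b_{i_0,Q}|^{k_{i_0}}$ attaches to $f_{i_0}$) or by $|b_{i_0}-b_{i_0,Q}|^{k_{i_0}}g$ (otherwise). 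The inductive hypothesis then expresses each piece as a sum over $\tau''\subseteq\tau\setminus\{i_0\}$; re-indexing via $\tau'=\tau''\cup\{i_0\}$ in the first branch and $\tau'=\tau''$ in the second exactly reconstitutes $\sum_{\tau'\subseteq\tau}$ of the target form.

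I do not anticipate any genuine obstacle in this argument. The one place that demands care is the subset bookkeeping at the end of the inductive step, where one must verify that the two branches of the single-index splitting correspond precisely to the partition $\{\tau'\subseteq\tau\colon i_0\in\tau'\}\sqcup\{\tau'\subseteq\tau\colon i_0\notin\tau'\}$; all remaining steps are routine applications of H\"older and Young.
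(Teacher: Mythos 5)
Your proof is correct, but it follows a genuinely different route from the paper. The paper argues in one shot: it rewrites $C_{\mathbf{k},\mathbf{t}}$ as a mixed $L^{\vec r}_{\vec x}(L^t_y)$ norm of the tensorized function $\varphi(\vec x,y)$ on $Q^{|\tau|+1}$, applies for each $i\in\tau$ the elementary pointwise bound $|b_i(x_i)-b_{i,Q}|^{k_i-t_i}|b_i(y)-b_{i,Q}|^{t_i}\leq|b_i(x_i)-b_{i,Q}|^{k_i}+|b_i(y)-b_{i,Q}|^{k_i}$, and expands the resulting product into the sum over $\tau'\subseteq\tau$, the mixed norm then factoring because each summand separates the variables. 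You instead prove a single-index inequality by H\"older (log-convexity of $t_0\mapsto\langle |f|\,|b-b_Q|^{t_0}\rangle_{r,Q}$) plus Young's inequality, and then induct on $|\tau|$, absorbing the discarded oscillations into a modified $g$; your subset bookkeeping ($\tau'=\tau''\cup\{i_0\}$ versus $\tau'=\tau''$) is consistent and reproduces the sum over $\tau'\subseteq\tau$ with constant $1$, exactly as in the statement. What each approach buys: the paper's argument is induction-free and treats all indices simultaneously, at the cost of setting up the product-space mixed-norm formalism; yours stays entirely at the level of one-variable averages on $Q$ and even yields, per index, the slightly sharper convex-combination bound before relaxing to the sum. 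The only point deserving one extra line in your write-up is the endpoint $r_{i}=\infty$ or $t=\infty$ admitted by the lemma: "raise to the $r$-th power and apply H\"older" needs the trivial separate remark that the essential supremum of $|f|^{1-\theta}(|f|\,|b-b_Q|^{k_0})^{\theta}$ is bounded by the corresponding product of essential suprema; with that noted, the argument is complete.
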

\begin{proof}[Proof of Lemma \ref{pre_1}]
   For $x_i, y \in X$ with $i \in \tau$ denote
\begin{align*}
   \varphi(\vec x, y) = \left(\prod_{i \in \tau}\left|f_i(x_i) (b_i(x_i) - b_{i,Q})^{t_i}\right|\right)  \left(\prod_{i \in \tau}  \left|b_i(y) - b_{i,Q}\right|^{k_i - t_i}g(y)\right) \chi_{Q^{|\tau| + 1}}(\vec x,y),
\end{align*}
where $\chi_{Q^{|\tau| + 1}}(\vec x,y)=\chi_{Q}(y)\prod\limits_{i \in \tau}\chi_{Q}(x_i)$.

Then we can know  that:
$$
C_{{\bf k}, {\bf t}} = \big\|\|\varphi(\vec x, y)\|_{L^t_{y}(\frac{d y}{\mu(Q)})}\big\|_{L^{\vec{r}}_{\vec{x}}\Big(\frac{d {\vec x}}{\mu(Q)^{|\tau|}}\Big)}
$$
where the mixed norm is defined by
\begin{align*}
\|\cdot\|_{L^{\vec{r}}_{\vec{x}}\Big(\frac{d {\vec x}}{\mu(Q)^{|\tau|}}\Big)}:&=\|\cdot\|_{L^{\vec{r}}_{\vec{x}}\left(\frac{d x_{\tau(1)}}{\mu(Q)} \cdots \frac{d x_{\tau(|\tau|)}}{\mu(Q)}\right)}\\
   &=\Big\| \cdots \big\|\|\cdot\|_{L^{r_{\tau(1)}}_{x_{\tau(1)}}(\frac{d x_{\tau(1)}}{\mu(Q)})}\big\|_{L^{r_{\tau(2)}}_{x_{\tau(2)}}(\frac{d x_{\tau(2)}}{\mu(Q)})} \cdots \Big\|_{L^{r_{\tau(|\tau|)}}_{x_{\tau(|\tau|)}}(\frac{d x_{\tau(|\tau|)}}{\mu(Q)})}.\\
\end{align*}
Note that 
$\left|b_i(x_i)- b_{i,Q}\right|^{k_i-t_i}\left|b_i(y)- b_{i,Q}\right|^{t_i} \leq\left|b_i(x_i)- b_{i,Q}\right|^{k_i}+\left|b_i(y)- b_{i,Q}\right|^{k_i}$, then we obtain
\begin{align*}
   \varphi(\vec x, y) &\leq \prod_{i \in \tau}\big(\left|b_i(x_i)- b_{i,Q}\right|^{k_i}+\left|b_i(y)- b_{i,Q}\right|^{k_i}\big) \times F(\vec x) g(y)\chi_{Q^{|\tau| + 1}}(\vec x,y)\\
   &\leq \sum_{\tau' \subseteq \tau} \left(\prod_{i \in \tau'}\left|b_i(x_i)- b_{i,Q}\right|^{k_i}\prod_{j \in \tau \backslash \tau'} \left|b_j(y)- b_{j,Q}\right|^{k_j} \right)F(\vec x) g(y)\chi_{Q^{|\tau| + 1}}(\vec x,y)
\end{align*}
where $F(\vec x)=\prod\limits_{i \in \tau}f_i(x_i)$.

Considering the fact that $\|\chi_{Q}(y)\|_{L^t_y\left(\frac{dy}{\mu(Q)}\right)} = 1$, it follows readily that 
\begin{align*}
   &\quad C_{{\bf k}, {\bf t}}\\
   &\leq  \left\|\Big\|\sum_{\tau' \subseteq \tau} \left(\prod_{i \in \tau'}\left|b_i(x_i)- b_{i,Q}\right|^{k_i}\prod_{j \in \tau \backslash \tau'} \left|b_j(y)- b_{j,Q}\right|^{k_j} \right)F(\vec x) g(y)\chi_{Q^{|\tau| + 1}}(\vec x,y)\Big\|_{L^t_{y}(\frac{d y}{\mu(Q)})}\right\|_{L^{\vec{r}}_{\vec{x}}\Big(\frac{d {\vec x}}{\mu(Q)^{|\tau|}}\Big)}\\
   &\le \sum_{\tau' \subseteq \tau} \left( \prod_{i_1 \in \tau'} \left\langle \left| b_{i_1} - b_{i_1,Q} \right|^{k_{i_1}} f_{i_1} \right\rangle_{r_{i_1},Q} 
   \left\langle \prod_{i_2 \in \tau \backslash \tau'} \left| b_{i_2} - b_{i_2,Q} \right|^{k_{i_2}} g \right\rangle_{t,Q} 
   \prod_{i_3 \in \tau \backslash \tau'} \left\langle f_{i_3} \right\rangle_{r_{i_3},Q} \right).\qedhere
\end{align*}

\end{proof}

Based on the above discussion, we have realized that ${\mathcal B}_{\eta ,\S,\tau,\tau',{\vec r},s'}^\mathbf{b,k}$ is a reduced version of ${\mathcal A}_{\mici ,\S,\tau,{\vec r},s'}^\mathbf{b,k,t}$. Therefore, we will study them in detail later, including yielding sparse bounds via them, and the weighted estimates of them. 

The notations in the following that are the same as the above are consistent with the above statements and concepts, unless we make special explanations for them.

\subsection{Organization}
~~

The structure of the rest is as follows. 
\begin{itemize}
\item In Sect. \ref{pre.}, we recall the concepts of space of homogeneous type, dyadic lattices and adjacent systems of dyadic cubes, and state preliminary properties of oscillation symbols.

\item In Sect. \ref{MSFSDP}, we establish
 the vector-valued multilinear fractional sparse form domination principle via higher-order multi-symbol $(m+1)$-linear fractional sparse forms.

\item In Sect. \ref{Maximal.control.}, we will set up the multilinear fractional $\vec{r}$-type maximal operators and a new class of multilinear fractional weights $ A_{(\vec{p},q),(\vec{r}, s)}(X)$, which can characterize this maximal operator, followed by revealing the norm equivalence between this maximal operator and the sparse forms.

\item In Sect. \ref{Bloom.estimate.}, the new Bloom type weighted estimates for multilinear fractional   sparse forms ${\mathcal B}_{\eta ,\S,\tau,\tau',{\vec r},s'}^\mathbf{b,k}$ is presented, in which we will use the new technic established in the Sect. \ref{Maximal.control.}.

\item In Sect. \ref{sharp}, we shall construct the sharp weighted estimates for higher-order multi-symbol multilinear fractional sparse form.

\item In Sect. \ref{A1}, we will demonstrate that the classical multilinear fractional singular integral operators and their generalized commutators are applicable to the multilinear fractional sparse form domination principle proposed in  Sect. \ref{MSFSDP}.

\item In Sect. \ref{A2}, we investigate sparse form type weighted Lebesgue $L^p(w)$ and weighted Sobolev $W^{s,p}(w)$ regularity estimates of the solution for a class of fractional Laplacian equations associated with the higher-order commutators via the multilinear fractional sparse bounds.

\end{itemize}

\section{\bf Preliminaries}\label{pre.}

\subsection{Spaces of homogeneous type }\label{def_hom}
~~

To illustrate the idea of this paper, we first provide the relevant definitions for spaces of homogeneous type (SHT).

\begin{definition}
	Let $d: X \times X \rightarrow [0, \infty)$ be a positive function and $X$ be a set, then the quasi-metric space $(X, d)$ satisfies the following conditions:
	\begin{enumerate}
		\item  When $x=y$, $d(x, y)=0$.
		\item $d(x, y)=d(y, x)$ for all $x, y \in X$.
		\item  For all $x, y, z \in X$, there is a constant $A_0 \geq 1$ such that $d(x, y) \leq A_0(d(x, z)+d(z, y))$.
	\end{enumerate}
\end{definition}

\begin{definition}
	Let $\mu$ be a measure of a space $X$. For a quasi-metric ball $B(x,  r)$ and any $r>0$, if $\mu$ satisfies doubling condition, then there exists a doubling constant $C_\mu \geq 1$, such that  
	\begin{align}\label{def_hom}
    0<\mu(B(x, 2 r)) \leq C_\mu \mu(B(x, r))<\infty.
  \end{align}
\end{definition}

\begin{definition}
	For a non-empty set $X$ with a qusi-metric $d$, a triple $(X, d, \mu)$ is said to be a space of homogeneous type if $\mu$ is a regular measure which satisfies doubling condition on the $\sigma$-algebra, generated by open sets and quasi-metric balls.
\end{definition}

\subsection{Dyadic analysis}

Consider a measure space $(X, d, \mu)$ where $0 < c_0 \leq C_0 < \infty$, and $0 < \delta < 1$. Let $J_k$ be a given index set. For each $k$, define $\d_k = \{Q_k^j\}_{j \in J_k}$ as a collection of measurable sets and a set of points $\{z_j^k\}_{j \in J_k}$. We refer to the collection $\d := \bigcup_{k \in \mathbb{Z}} \d_k$ as a \emph{dyadic lattice} with parameters $c_0$, $C_0$, and $\delta$ if it satisfies the following conditions:
\begin{list}{\rm (\theenumi)}{\usecounter{enumi}\leftmargin=1cm \labelwidth=1cm \itemsep=0.2cm \topsep=.2cm \renewcommand{\theenumi}{\roman{enumi}}}
\item For all $k \in \mathbb{Z}$ we have
\begin{align*}
X=\bigcup_{j \in J_k} Q_j^k;
\end{align*}
\item For $k \geq l, Q \in \d_k$ and $Q^{\prime} \in \d_l$ we either have $Q \cap Q^{\prime}=\varnothing$ or $Q \subseteq Q^{\prime}$;
\item For each $k \in \mathbb{Z}$ and $j \in J_k$ we have
\begin{align}\label{eq:contain}
B\left(z_j^k, c_0 \delta^k\right) \subseteq Q_j^k \subseteq B\left(z_j^k, C_0 \delta^k\right),
\end{align}
where $z_j^k$ and $\delta^k$ as the center and side length of a cube $Q_j^k \in \d_k$.
\end{list} 

Follow we define the sparse family of cubes.
\begin{definition}\label{D:sparse}
  For $0 < \delta < 1$, a collection $\mathcal{S} $ of dyadic cubes is \textit{$\delta$-sparse} if for each $Q \in \mathcal{S}$, there exists a measurable subset $E_Q \subseteq Q$ with $\mu(E_Q) \geq \delta \mu(Q)$, and the family $\{E_Q\}_{Q \in \mathcal{S}}$ are pairwise disjoint.
\end{definition}

Then we define the restricted dyadic lattice $\d(Q):=\{P \in \d: P \subseteq Q\}$. We will say that an estimate depends on $\d$ if it depends on the parameters $c_0, C_0$ and $\delta$. We more consider the dilations of such cubes, 
in Euclidean space, the expansion of a cube can be easily determined using volume calculations. However, in a space of homogeneous type, we need to redefine the dilations $\dada P$ for $\dada \geq 1$ as
\begin{align*}
\dada P:=B\left(z, \dada \cdot C_0 \delta^k\right).
\end{align*}

In $\R^n$ we all know a fact that any ball is contained in a cube of comparable size from one of these dyadic lattices (refer to \cite[ Lemma 3.2.26]{hombook}). Follow we introduce the a proposition to achieve the same effect on spaces of homogeneous type. We omit the proof details. Readers can refer to \cite{HK}.

\begin{proposition}\label{cubeeq}
  Let \((X, d, \mu)\) be a space of homogeneous type. There exist constants \(0 < c_0 \leq C_0 < \infty\), \(\gamma \geq 1\), \(0 < \delta < 1\), and an integer \(m \in \mathbb{N}\) such that there exist dyadic systems \(\d^1, \ldots, \d^N\) with parameters \(c_0, C_0\), and \(\delta\), satisfying the following property: for each point \(x \in X\) and radius \(\rho > 0\), there exists an index \(j \in \{1, \ldots, N\}\) and a cube \(Q \in \d^j\) such that
\[
B(x, \rho) \subseteq Q, \quad \text{and} \quad \operatorname{diam}(Q) \leq \gamma \rho.
\]
\end{proposition}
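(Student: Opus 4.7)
The plan is to follow the construction of adjacent dyadic systems due to Hytönen and Kairema, which is tailor-made for a statement of this form on a general space of homogeneous type. The proof naturally splits into two tasks: (i) building a single dyadic lattice on $(X,d,\mu)$, and (ii) producing finitely many such lattices whose union covers every metric ball with controlled distortion.

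For (i), I would fix $\delta \in (0,1)$ small enough relative to the quasi-metric constant $A_0$, and at each scale $k \in \mathbb{Z}$ select a maximal $\delta^k$-separated subset of centers $\{z_j^k\}_{j \in J_k}$. Maximality gives the covering $X = \bigcup_j B(z_j^k, \delta^k)$, while separation gives disjointness of the balls of half the radius. A parent-assignment map $\pi$ matches each $z_j^k$ to a unique coarser-scale center $z_{\pi(j)}^{k-1}$ within distance at most $A_0 \delta^{k-1}$. The cubes $Q_j^k$ are then obtained by a Christ-type iterative refinement: start from the partition into nearest-center cells, and then redistribute boundary points through finitely many auxiliary scales so that each $Q_j^k$ sits inside a single parent cube $Q_{\pi(j)}^{k-1}$. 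The doubling of $\mu$ controls the overlap multiplicities at each step and yields the sandwich \eqref{eq:contain} with explicit $c_0, C_0$ depending only on $\delta$ and $A_0$.

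For (ii), the strategy is to build $N$ dyadic systems $\mathcal{D}^1,\ldots,\mathcal{D}^N$ in parallel, each based on a distinct family of centers $\{z_j^{k,\ell}\}$ for $\ell = 1,\ldots,N$. The essential requirement is the following coverage property at every scale $k$: for each $x \in X$ there exists $\ell \in \{1,\ldots,N\}$ and some $j$ with $d(x, z_j^{k,\ell}) \leq \kappa \delta^k$, where $\kappa$ is chosen sufficiently small relative to $1/A_0^2$. The existence of such a finite family of shifts is a combinatorial consequence of the geometric doubling derived from \eqref{def_hom}: the number of distinct $\kappa \delta^k$-separated configurations inside any ball of radius $\delta^k$ is bounded independently of $k$, so a greedy (or probabilistic) selection yields $N$ center families achieving the coverage. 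Granted this, for any ball $B(x, \rho)$ I would choose the unique $k$ with $\delta^{k+1} < \rho \leq \delta^k$, pick $\ell$ and $j$ as above, and use \eqref{eq:contain} together with the quasi-triangle inequality to verify $B(x,\rho) \subseteq Q_j^{k,\ell}$, while $\operatorname{diam}(Q_j^{k,\ell}) \leq 2 A_0 C_0 \delta^k \leq 2 A_0 C_0 \delta^{-1} \rho$ establishes the diameter bound with $\gamma = 2 A_0 C_0/\delta$.

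The main obstacle is the combinatorial step in (ii): quantifying $N$ in terms of the doubling constant $C_\mu$, and maintaining the parent-child consistency across scales for each of the $N$ systems simultaneously. The delicate point is that the center families of different $\mathcal{D}^\ell$ cannot be chosen independently at different scales --- otherwise the three defining axioms of a dyadic lattice in Section~\ref{pre.} may fail for each individual $\ell$ --- so the shifts must be propagated through all scales in a coherent way, which is what forces the use of the underlying measure doubling and not merely the existence of \emph{a priori} maximal nets. Once this is carried out, the remaining verifications (nesting, covering at each scale, and the two-sided ball inclusion) are routine applications of the quasi-triangle inequality.
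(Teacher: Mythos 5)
The paper itself does not prove this proposition: it explicitly omits the argument and refers to Hytönen--Kairema \cite{HK}, and your outline is precisely that construction (Christ-type lattices from maximal $\delta^k$-separated nets, then finitely many ``adjacent'' center families whose number $N$ is controlled by geometric doubling, then a scale-selection argument), so in substance you are following the same route the paper relies on, and your identification of the cross-scale coherence of the shifted center families as the delicate point is accurate. One concrete quantitative slip in your final verification should be fixed: with the scale chosen by $\delta^{k+1}<\rho\leq\delta^k$ and a center $z$ with $d(x,z)\leq\kappa\delta^k$, the inclusion $B(x,\rho)\subseteq Q$ via the inner ball in \eqref{eq:contain} requires $A_0(\rho+\kappa\delta^k)\leq c_0\delta^k$, i.e.\ $A_0(1+\kappa)\leq c_0$, which is impossible since $c_0<1\leq A_0$ in any such construction. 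The remedy is standard and harmless: choose the generation a fixed number of steps coarser, namely $k$ with $\rho\approx\delta^{k+m_0}$ where $m_0$ depends only on $A_0$, $c_0$, $\delta$ (this offset is exactly the role of the integer $m$ appearing in the statement), after which $B(x,\rho)\subseteq B(z,c_0\delta^k)\subseteq Q$ holds and the diameter bound becomes $\operatorname{diam}(Q)\leq 2A_0C_0\delta^k\leq\gamma\rho$ with $\gamma\approx 2A_0C_0\delta^{-m_0-1}$; this only changes the value of $\gamma$, not the structure of your argument.
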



The following covering lemma fundamentally determines that we can establish sparse domination on \(X\). Its proof relies on Proposition \ref{cubeeq}, which is proved in \cite{Lorist2020}.
\begin{lemma}[\cite{Lorist2020}, Lemma 2.2]\label{lem.covering}
  Let \((X, d, \mu)\) be a space of homogeneous type and \(\d\) a dyadic system with parameters \(c_0, C_0\), and \(\delta\). Suppose that \(\operatorname{diam}(X) = \infty\), take \(\dada \geq \frac{3 A_0^2}{\delta}\), and set \(E \subseteq X\) satisfy \(0 < \operatorname{diam}(E) < \infty\). Then there exists a partition \(\mathcal{D}' \subseteq \d\) of \(X\) such that \(E \subseteq \dada P\) for all \(Q \in \mathcal{D}'\).
\end{lemma}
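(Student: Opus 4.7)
The plan is to construct $\mathcal{D}'$ explicitly via a ``siblings'' construction anchored at a fixed reference point $x_0 \in E$. For each $k \in \Z$, let $P^k(x_0)$ denote the unique cube of $\d_k$ containing $x_0$; by the nesting property these form an increasing chain as $k$ decreases, and their union $T_{x_0} := \bigcup_{k} P^k(x_0)$ is the dyadic component of $x_0$. Next I would fix $k_0 \in \Z$ sufficiently negative so that $\delta^{k_0}$ exceeds a constant multiple of $\diam(E)/(A_0 C_0)$ (specified in the key estimate below); this is possible because $\delta^{k_0} \to \infty$ as $k_0 \to -\infty$ while $\diam(E) < \infty$. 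Then set
\begin{align*}
\mathcal{D}' := \{P^{k_0}(x_0)\} \cup \bigcup_{k \le k_0} \{\,P \in \d_k : P \subseteq P^{k-1}(x_0),\ P \neq P^k(x_0)\,\},
\end{align*}
i.e.\ the cube $P^{k_0}(x_0)$ together with every ``sibling'' of $P^k(x_0)$ inside the parent $P^{k-1}(x_0)$ for each $k \le k_0$. Telescoping shows that $P^{k_0}(x_0)$ together with the siblings at levels $k_0, k_0-1, \ldots, k_0-N$ partitions exactly $P^{k_0-N-1}(x_0)$, so letting $N \to \infty$ yields a pairwise disjoint partition of $T_{x_0}$. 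If $\d$ decomposes into several dyadic components (as with the standard lattice on $\R$), the same construction is repeated in each component with its own anchor and starting level.

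The technical core is to verify that every $P \in \mathcal{D}'$ is good, i.e.\ $E \subseteq \dada P$. For a sibling $P \in \d_k$ contained in $P^{k-1}(x_0)$ with center $z_P$, two applications of (\ref{eq:contain}) yield $d(z_P, x_0) \le 2 A_0 C_0 \delta^{k-1}$, and the quasi-triangle inequality then gives, for any $y \in E$,
\begin{align*}
d(z_P, y) \le A_0(d(z_P, x_0) + d(x_0, y)) \le 2 A_0^2 C_0 \delta^{k-1} + A_0 \diam(E).
\end{align*}
Since $\dada P = B(z_P, \dada C_0 \delta^k)$, the inclusion $E \subseteq \dada P$ reduces to
\begin{align*}
\frac{2 A_0^2}{\delta} + \frac{A_0 \diam(E)}{C_0\, \delta^k} \le \dada.
\end{align*}
The hypothesis $\dada \ge 3 A_0^2/\delta$ absorbs the first term (leaving $\tfrac{1}{3}\dada$ on the right), and the second term is made at most $\tfrac{1}{3}\dada$ by taking $\delta^{k_0}$ large enough in the opening step, so that the bound holds uniformly for every $k \le k_0$. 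The case $P = P^{k_0}(x_0)$ is handled by the same estimate with $k = k_0$.

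The main obstacle lies in the above estimate: the quasi-triangle constant $A_0$, the scale ratio $\delta^{-1}$, and the dilation $\dada$ conspire to produce the precise sufficient threshold $3 A_0^2/\delta$ stated in the hypothesis, and making the calculation rigorous requires the careful double application of the quasi-triangle inequality above. A secondary subtlety is the possible splitting of $\d$ into several dyadic trees; this is handled by repeating the construction in each tree separately, which is always possible because $\diam(X) = \infty$ ensures the existence of arbitrarily large cubes along every chain, so one can pick in each component a starting level $k_0^T$ negative enough that the anchor cube $P^{k_0^T}(x_T)$ satisfies $\dada P^{k_0^T}(x_T) \supseteq E$ (using the quasi-triangle inequality with the distance $d(x_T,E)$).
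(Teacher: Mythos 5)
Your proposal is correct and is essentially the standard argument for this lemma: the paper itself gives no proof (it defers to \cite{Lorist2020}, Lemma 2.2), and your construction — anchoring the increasing chain of ancestors at a point of $E$, taking that cube together with all siblings along the chain from a level $k_0$ with $\delta^{k_0}$ comparable to $\operatorname{diam}(E)$, and verifying $E \subseteq \beta P$ by two uses of \eqref{eq:contain} plus the quasi-triangle inequality, with $\beta \geq 3A_0^2/\delta$ absorbing the $2A_0^2/\delta$ term — is exactly the cited one. Your treatment of the case where the union of the ancestor chain is only a quadrant of $X$ (re-anchoring in each dyadic component with a starting level coarse enough relative to $d(x_T,E)$) is a point the statement's ``partition of $X$'' genuinely requires, and you handle it correctly.
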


\subsection{Preliminary properties of oscillation symbols}

\begin{definition}\label{MWbmo}
 A locally integrable function $b \in L^1_{\text{loc}}(X)$ belongs to ${ BMO}_\omega(X)$ provided that
\[
\|b\|_{{ BMO}_\omega(X)} := \sup_{B} \frac{1}{\omega(B)} \int_B |b(x) - b_B| \, d\mu(x) < \infty,
\]
where
\[
b_B := \frac{1}{\mu(B)} \int_B b(x) \, d\mu(x)
\]
and the supremum is taken over all balls $B \subseteq X$.
  \end{definition}

\begin{lemma}[\cite{Yang2019}]\label{zhang:6.1}
   Let $0 < \gamma < 1$, $\mathcal{D}$ be a dyadic lattice in $X$, and $\mathcal{S} \subseteq \mathcal{D}$ a $\gamma$-sparse family. For $b \in L^1_{\text{loc}}(X)$, there exists a $\frac{\gamma}{2(\gamma+1)}$-sparse family $\tilde{\mathcal{S}} \subseteq \mathcal{D}$ containing $\mathcal{S}$ such that for each $Q \in \tilde{\mathcal{S}}$,
   \[
     |b(x) - b_Q| \leq C \sum_{\substack{R \in \tilde{\mathcal{S}} \\ R \subseteq Q}} \langle\left|b(x)-b_R\right| \rangle_{R} \chi_R(x) \quad \text{a.e. } x \in Q.
   \]
 \end{lemma}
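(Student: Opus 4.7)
The plan is to enlarge $\mathcal{S}$ by performing, inside every $Q \in \mathcal{S}$, a Calderón--Zygmund type stopping-time decomposition of $b - b_Q$, and then iterating this construction within each newly produced stopping cube. Fix a threshold $\lambda > 1$ to be chosen later. For each $Q \in \mathcal{S}$, let $\mathcal{P}(Q)$ be the collection of maximal dyadic subcubes $P \subsetneq Q$ in $\mathcal{D}$ satisfying $\langle |b - b_Q| \rangle_P > \lambda \langle |b - b_Q| \rangle_Q$. By maximality these cubes are pairwise disjoint, and the standard Calderón--Zygmund estimate gives $\sum_{P \in \mathcal{P}(Q)} \mu(P) \leq \lambda^{-1} \mu(Q)$. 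Iterating (replacing $b_Q$ by $b_P$ for each $P \in \mathcal{P}(Q)$ and repeating inside $P$), and then taking the union across all $Q \in \mathcal{S}$, I let $\tilde{\mathcal{S}} \supseteq \mathcal{S}$ consist of $\mathcal{S}$ together with all cubes produced in any generation of this procedure.

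The pointwise bound then follows by a telescoping argument. Fix $Q \in \tilde{\mathcal{S}}$ and $x \in Q$ in the Lebesgue set of $b$. Let $Q = R_0 \supsetneq R_1 \supsetneq \cdots \supsetneq R_N$ be the chain of cubes in $\tilde{\mathcal{S}}$ containing $x$ (each $R_{j+1}$ being the unique element of $\mathcal{P}(R_j)$ containing $x$, with the chain terminating when $x$ lies outside every subsequent stopping cube). For $x$ outside every further stopping cube, Lebesgue differentiation combined with maximality in the stopping criterion yields $|b(x) - b_{R_N}| \leq \lambda \langle |b - b_{R_N}| \rangle_{R_N}$. At each intermediate level the stopping criterion applied to the dyadic parent of $R_{j+1}$ inside $R_j$ gives $|b_{R_{j+1}} - b_{R_j}| \lesssim \langle |b - b_{R_j}| \rangle_{R_j}$, and the triangle inequality $\langle |b - b_{R_j}| \rangle_{R_{j+1}} \leq \langle |b - b_{R_{j+1}}| \rangle_{R_{j+1}} + |b_{R_{j+1}} - b_{R_j}|$ allows one to absorb everything into terms of the form $\langle |b - b_{R_{j+1}}| \rangle_{R_{j+1}}$. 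Tagging each term with $\chi_{R_{j+1}}(x)$ and summing produces exactly the claimed sparse dominated inequality.

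For the sparsity of $\tilde{\mathcal{S}}$, I associate to each $Q \in \tilde{\mathcal{S}}$ a disjoint set $E_Q^{\mathrm{new}}$: if $Q \in \mathcal{S}$, set $E_Q^{\mathrm{new}} = E_Q \setminus \bigcup_{P \in \mathcal{P}(Q)} P$, where $E_Q$ is the given $\gamma$-subset; if $Q \in \tilde{\mathcal{S}} \setminus \mathcal{S}$, set $E_Q^{\mathrm{new}} = Q \setminus \bigcup_{P \in \mathcal{P}(Q)} P$. The bound $\mu(\bigcup \mathcal{P}(Q)) \leq \lambda^{-1} \mu(Q)$ yields $\mu(E_Q^{\mathrm{new}}) \geq (\gamma - \lambda^{-1}) \mu(Q)$ in the first case and $\mu(E_Q^{\mathrm{new}}) \geq (1 - \lambda^{-1}) \mu(Q)$ in the second, while disjointness inside each $Q \in \mathcal{S}$ is automatic because $E_Q^{\mathrm{new}}$ avoids every descendant stopping cube. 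A judicious choice of $\lambda$ and, when needed, a splitting of the $E$-sets to separate contributions coming from distinct nested originals of $\mathcal{S}$, together produce the announced constant $\frac{\gamma}{2(\gamma + 1)}$.

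The main obstacle is the precise bookkeeping in the sparsity step: one must simultaneously balance the loss $\lambda^{-1}$ in the stopping-time estimate against the initial density $\gamma$, and handle the possibility that $E_Q$ for different $Q \in \mathcal{S}$ overlaps with descendant stopping cubes coming from other originals, which is exactly where the factor $2$ in the denominator enters. The remaining pieces --- the stopping-time bound $\sum \mu(P) \leq \lambda^{-1} \mu(Q)$ and the telescoping pointwise estimate --- are by now standard tools in the dyadic analysis of $\BMO$-type symbols.
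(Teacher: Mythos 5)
You are proving a lemma the paper itself does not prove: it is quoted from \cite{Yang2019}, where the argument is the Lerner--Ombrosi--Rivera-Ríos type construction that your proposal follows in outline (iterated Calderón--Zygmund stopping cubes inside each $Q\in\mathcal{S}$, then a telescoping estimate along the chain of stopping cubes containing $x$). The analytic half of your sketch is essentially fine: the stopping estimate $\sum_{P\in\mathcal{P}(Q)}\mu(P)\le\lambda^{-1}\mu(Q)$, the parent--child comparison (which in a space of homogeneous type silently uses doubling to compare the measures of a dyadic cube and its parent), and the telescoping bound give the displayed pointwise inequality for a.e.\ $x$, since the set of points lying in infinitely many generations of stopping cubes is null.

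The genuine gap is exactly the step you defer: the $\frac{\gamma}{2(\gamma+1)}$-sparseness of $\tilde{\mathcal{S}}$. Your sets $E_Q^{\mathrm{new}}$ are only disjoint from the stopping cubes generated inside the \emph{same} original cube; nothing prevents $E_P^{\mathrm{new}}=P\setminus\bigcup\mathcal{P}(P)$, for a stopping cube $P$ produced inside some $Q'\in\mathcal{S}$, from meeting $E_Q^{\mathrm{new}}$ for a different $Q\in\mathcal{S}$ nested with $P$, or from meeting $E_{P'}^{\mathrm{new}}$ for a stopping cube produced inside yet another member of $\mathcal{S}$. This cross-family disjointness is the whole content of the sparseness claim, and it cannot follow merely from ``each generated family is sparse inside its $Q$'' plus ``$\mathcal{S}$ is $\gamma$-sparse'': that abstract combination is false. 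For instance, in $\mathbb{R}^n$ let $\mathcal{S}$ be the chain of dyadic ancestors $Q_k$ of a fixed cube $R$ and attach to $Q_k$ the family $\{Q_k\}\cup\{\text{all $k$-th generation dyadic descendants of }R\}$; each attached family is $\frac12$-sparse and $\mathcal{S}$ is sparse, yet the union contains all of $\mathcal{D}(R)$ and is not sparse. Hence the sparseness of $\tilde{\mathcal{S}}$ must be extracted from the specific structure of the stopping families (in \cite{Yang2019}, following Lerner--Ombrosi--Rivera-Ríos, this is done through a Carleson-packing type verification), and ``a judicious choice of $\lambda$ plus a splitting of the $E$-sets'' is a placeholder rather than an argument. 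Note also that tuning $\lambda$ against $\gamma$ so that $\gamma-\lambda^{-1}$ exceeds the target density makes your constant $C$ in the pointwise bound depend on $\gamma$, whereas the standard construction fixes the threshold and obtains the stated sparseness constant from the packing argument alone.
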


\begin{lemma}[\cite{250}, Corollary 3.1.9]\label{BMOeq.}
   For all $1<p<\infty$, given $\d$ is a dyadic lattice of $X$, then
   \begin{align}\label{BMOeq._}
\mathop {\sup }\limits_{Q \in \d} \lla |b - b_{Q}| \rra_{p,Q} \approx\|f\|_{\mathrm{B M O}}.
   \end{align}
\end{lemma}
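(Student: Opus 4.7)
The plan is to establish the two-sided equivalence by splitting into the trivial and the nontrivial direction, with the nontrivial one resting on the John--Nirenberg inequality in the SHT setting.

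For the easy direction $\|b\|_{\BMO} \lesssim \sup_{Q \in \d}\lla|b-b_Q|\rra_{p,Q}$, I would first observe that Hölder's inequality yields $\lla|b-b_Q|\rra_{1,Q} \le \lla|b-b_Q|\rra_{p,Q}$ for every cube $Q \in \d$, reducing the task to bounding $\|b\|_{\BMO}$ by the $L^1$-oscillation supremum over $\d$. Since $\BMO$ is defined via balls rather than dyadic cubes, I would invoke Proposition \ref{cubeeq}: for any ball $B=B(x,\rho)$ there is an adjacent dyadic system $\d^j$ and a cube $Q \in \d^j$ with $B \subseteq Q$ and $\mu(Q) \le C\mu(B)$ by doubling. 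The split $|b(y) - b_B| \le |b(y) - b_Q| + |b_Q - b_B|$, combined with the crude control $|b_Q - b_B| \le \tfrac{\mu(Q)}{\mu(B)}\lla|b - b_Q|\rra_{1,Q}$, then reduces the ball estimate to the dyadic one at the cost of a constant depending only on $C_\mu$ and on the number $N$ of adjacent systems furnished by the proposition.

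For the reverse direction, I would invoke the John--Nirenberg inequality on $(X,d,\mu)$: there exist constants $c_1, c_2 > 0$ depending only on the SHT structure such that
\begin{equation*}
\mu\!\left(\{x \in Q : |b(x) - b_Q| > \lambda\}\right) \le c_1\, \mu(Q)\exp\!\left(-\frac{c_2 \lambda}{\|b\|_{\BMO}}\right)
\end{equation*}
for every cube $Q$ and every $\lambda > 0$. Combining this decay with the layer-cake representation
\begin{equation*}
\lla|b-b_Q|^p\rra_Q^{p} = \frac{p}{\mu(Q)}\int_0^\infty \lambda^{p-1}\mu\bigl(\{x \in Q : |b(x)-b_Q|>\lambda\}\bigr)\,d\lambda
\end{equation*}
leads, after an elementary Gamma-integral, to $\lla|b-b_Q|^p\rra_Q^{p} \le c_1 \Gamma(p+1) \bigl(\|b\|_{\BMO}/c_2\bigr)^{p}$. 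Taking $p$-th roots and then the supremum over $Q \in \d$ gives the desired inequality with constant depending only on $p$ and the SHT parameters.

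The main obstacle is securing the John--Nirenberg inequality in the SHT framework. It is proved through a Calderón--Zygmund stopping-time iteration starting from the defining oscillation bound $\lla|b-b_Q|\rra_{1,Q} \le \|b\|_{\BMO}$; the required dyadic Calderón--Zygmund decomposition is available via the dyadic lattices constructed in Subsection 2.2 together with the doubling property of $\mu$, so the classical Euclidean argument transfers essentially verbatim with all constants tracked through $C_\mu$. This is precisely what the cited reference records as Corollary 3.1.9, and accordingly the body of the proof can simply stitch together Hölder, the adjacent-systems covering, and this John--Nirenberg statement.
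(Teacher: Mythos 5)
Your proposal follows exactly the route that the paper's citation points to (Grafakos, Cor.\ 3.1.9, i.e.\ John--Nirenberg plus layer-cake for the hard direction, H\"older for the easy one); the paper itself gives no proof, and your sketch of the substantive direction $\sup_{Q}\lla|b-b_Q|\rra_{p,Q}\lesssim\|b\|_{\BMO}$ is correct, including the observation that John--Nirenberg transfers to dyadic cubes of an SHT via the Calder\'on--Zygmund stopping time on $\d$, the sandwiching \eqref{eq:contain} and doubling. (Minor slip: the left-hand side of your layer-cake identity should be $\lla|b-b_Q|\rra_{p,Q}^{p}=\langle|b-b_Q|^p\rangle_Q$, not $\langle|b-b_Q|^p\rangle_Q^{\,p}$; the Gamma-integral conclusion is then exactly what you want.) This is also the only direction the paper ever uses, e.g.\ in the proof of Theorem \ref{quan.main}.

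One caveat on the converse inequality. As literally stated, the lemma fixes a \emph{single} dyadic lattice $\d$, and for a single lattice the bound $\|b\|_{\BMO}\lesssim\sup_{Q\in\d}\lla|b-b_Q|\rra_{p,Q}$ is false in general: dyadic-type BMO is strictly larger than BMO (already on $\mathbb{R}$, a function such as $\log(1/x)\chi_{(0,1)}$ has finite dyadic oscillation supremum but infinite BMO norm, because no dyadic interval straddles the origin). Your covering argument tacitly acknowledges this: Proposition \ref{cubeeq} hands you a cube $Q$ in \emph{some} adjacent system $\d^j$, not in the given $\d$, so what you actually prove is $\|b\|_{\BMO}\lesssim\max_{1\le j\le N}\sup_{Q\in\d^j}\lla|b-b_Q|\rra_{1,Q}$, equivalently the version of the lemma in which the supremum runs over all balls (as in the cited corollary) or over the union of the adjacent dyadic systems. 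That corrected statement is the right one and is all the paper needs; just be aware that your proof does not (and cannot) establish the one-lattice formulation as printed, and say so explicitly rather than letting the cube from $\d^j$ masquerade as a cube of $\d$.
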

\begin{lemma}\label{xin_1}
Let \((X, d, \mu)\) is a space of homogeneous type  with a dyadic lattice \(\mathcal{D}\). Let \(Q \in \mathcal{D}\) and \(b \in {\rm BMO}(X)\). Let \(w\) be a nonnegative function integrable over \(Q\) with respect to the measure \(\mu\), and \(s\) be a positive real number. Then the following inequality holds
\[
\int_Q w(x) |b(x) - b_Q|^s \, d\mu(x) \leq {C^*} \left( \int_Q w(x) \, d\mu(x) \right) \|b\|_{{\rm BMO}}^s,
\]
where ${C^*}$ depends on $s$, \(A_0\), \(C_\mu\), \(c_0\), \(C_0\) and \(\delta\).
\end{lemma}
\begin{proof}
To establish the inequality, we proceed as follows. In a space of homogeneous type with a dyadic lattice, functions in BMO satisfy a dyadic John-Nirenberg inequality: for any dyadic cube \(Q \in \mathcal{D}\) and any \(\lambda > 0\), 
\[
\mu(\{ x \in Q : |b(x) - b_Q| > \lambda \}) \leq c_1 \mu(Q) e^{-c_2 \lambda / \|b\|_{\text{BMO}}},
\]
where \(c_1 \geq 1\) and \(c_2 > 0\) depend only on the space’s parameters (\(A_0\), \(C_\mu\), \(c_0\), \(C_0\), \(\delta\)). We express the integral using the layer cake representation, 
\[
\int_Q w |b - b_Q|^s \, d\mu = s \int_0^\infty \lambda^{s-1} \left( \int_{\{ x \in Q : |b - b_Q| > \lambda \}} w(x) \, d\mu(x) \right) d\lambda,
\]
decomposing the \(s\)-th power into level sets. Since \(w \geq 0\) is integrable, the inner integral is bounded by 
\[
\int_{\{ x \in Q : |b - b_Q| > \lambda \}} w(x) \, d\mu(x) \leq \int_Q w(x) \, d\mu(x),
\]
as \(\{ x \in Q : |b - b_Q| > \lambda \} \subseteq Q\). Applying the John-Nirenberg inequality, we get 
\begin{align*}
	\int_Q w |b - b_Q|^s \, d\mu & \leq s \left( \int_Q w \, d\mu \right) \int_0^\infty \lambda^{s-1} \frac{\mu(\{ x \in Q : |b - b_Q| > \lambda \})}{\mu(Q)} \, d\lambda \\
	&\leq s \left( \int_Q w \, d\mu \right) \int_0^\infty \lambda^{s-1} c_1 e^{-c_2 \lambda / \|b\|_{\text{BMO}}} \, d\lambda.
\end{align*}
Now, compute the integral \(\int_0^\infty \lambda^{s-1} e^{-c_2 \lambda / \|b\|_{\text{BMO}}} \, d\lambda\) with the substitution \(t = \frac{c_2 \lambda}{\|b\|_{\text{BMO}}}\), so \(\lambda = \frac{\|b\|_{\text{BMO}}}{c_2} t\), \(d\lambda = \frac{\|b\|_{\text{BMO}}}{c_2} dt\), transforming the limits from \(\lambda = 0\) to \(\infty\) to \(t = 0\) to \(\infty\). This yields 
\[
\int_0^\infty \left( \frac{\|b\|_{\text{BMO}}}{c_2} t \right)^{s-1} e^{-t} \cdot \frac{\|b\|_{\text{BMO}}}{c_2} \, dt = \left( \frac{\|b\|_{\text{BMO}}}{c_2} \right)^s \int_0^\infty t^{s-1} e^{-t} \, dt,
\]
and since \(\int_0^\infty t^{s-1} e^{-t} \, dt = \Gamma(s)\), we have 
\[
\int_0^\infty \lambda^{s-1} e^{-c_2 \lambda / \|b\|_{\text{BMO}}} \, d\lambda = \left( \frac{\|b\|_{\text{BMO}}}{c_2} \right)^s \Gamma(s).
\]
Substituting back, 
\[
\int_Q w |b - b_Q|^s \, d\mu \leq s \left( \int_Q w \, d\mu \right) c_1 \left( \frac{\|b\|_{\text{BMO}}}{c_2} \right)^s \Gamma(s) = \left[ s c_1 \left( \frac{1}{c_2} \right)^s \Gamma(s) \right] \left( \int_Q w \, d\mu \right) \|b\|_{\text{BMO}}^s.
\]
Defining \(C = s c_1 \left( \frac{1}{c_2} \right)^s \Gamma(s)\), we obtain the desired inequality.

\end{proof}

Next we first introduce some important notation.

\subsection{Notation and Setting}\label{cuti}
Let \(X\) be a metric measure space, with \(L_b^{\infty}(X)\) as the space of bounded functions with bounded support in \(L^{\infty}(X)\), and \(\omega: X \rightarrow [0, \infty]\) a locally integrable weight satisfying \(0 < \omega(x) < \infty\) almost everywhere, defining the measure \(d\omega(x) = \omega(x)d\mu(x)\) and \(\omega(E) = \int_E \omega \,d\mu\) for measurable \(E \subseteq X\). For \(r > 0\), \(E \subseteq X\), and \(f \in L_{\text{loc}}^r(X)\), the average \(r\)-norm is
\begin{align*}
	\langle f\rangle_{r, E} = \left(\frac{1}{\mu(E)} \int_E |f|^r \,d\mu\right)^{1/r},
\end{align*}
denoted \(\langle f\rangle_{E}\) when \(r = 1\). 

We define \(\mathbf{b} = (b_1, \dots, b_m) \in (L_{\text{loc}}^1(X))^m\) as an \(m\)-tuple of locally integrable functions, and \(\mathbf{k}, \mathbf{t} \in (\mathbb{N}_0)^m\) as \(m\)-tuples of non-negative integers, where \(\mathbf{k} \geq \mathbf{t}\) if \(k_i > t_i\) for all \(i = 1, \dots, m\). For \(\tau_m = \{1, \dots, m\}\) and \(\tau \subseteq \tau_m\), we denote \(|\tau|\) as its cardinality, \(\tau^c = \tau_m \setminus \tau\) as its complement, and \(\tau_\ell\) if \(\tau = \{\tau(j)\}_{j=1}^{|\tau|}\) is strictly increasing with \(|\tau| = \ell\). 

For \(m \in \mathbb{N}\), \(\vec{r} = (r_1, \dots, r_m) \in (0, \infty)^m\), \(\vec{p} = (p_1, \dots, p_m) \in (0, \infty]^m\), and \(s \in (0, \infty]\), we say \(\vec{r} \leq \vec{p}\) if \(r_j \leq p_j\) and \(\vec{r} < \vec{p}\) if \(r_j < p_j\) for all \(j = 1, \dots, m\); here, \(\boldsymbol{p} = (p_1, \dots, p_{m+1})\) is an \((m + 1)\)-tuple. We define \((\vec{r}, s) \stackrel{}{\prec} (\vec{p}, q)\) when \(\vec{r} < \vec{p}\) and \(q < s\), \((\vec{r}, s) \stackrel{}{\preceq} (\vec{p}, q)\) when \(\vec{r} \leq \vec{p}\) and \(q < s\), and \((\vec{r}, s) \preceq^{*} (\vec{p}, q)\) when \(\vec{r} \leq \vec{p}\) and \(q \leq s\).

Let ${\bf b} \in {\left( {L_{loc}^1\left( X \right)} \right)^m}$. Given an $m$-sublinear operator $G$ and a multi-index \(\mathbf{k} = (k_1, \ldots, k_{m}) \in {\N_0}^m \), its generalized commutator can be defined by 
\begin{align*}
	G_{\tau_{\ell}}^{{\bf b, k}}(\vec{f})(x):=G\left((b_1(x) - b_1)^{\beta_1}f_1,\ldots,(b_m(x) - b_m)^{\beta_m}f_m\right)(x),
\end{align*}

where
\[
\beta_i = 
\begin{cases}
	k_i, & \text{if} \quad i \in \tau, \\
	0, & \text{if} \quad i \in \tau^{c}.
\end{cases}
\] 
When ${\bf k}=\vec{1}$, we denote it by $G_{\tau_{\ell}}^{{\bf b}}$.
When ${\bf k}=0$, the generalized commutator is back to $G$.

For some $\vec{\vf} = {\left\{ {\left( {{f_{1,{j_1}}}, \cdots ,{f_{1,{j_m}}}} \right)} \right\}_{\scriptstyle{j_i} \in \Z \hfill\atop
\scriptstyle   i \in \left\{ {1, \cdots ,m} \right\}\hfill}} \subseteq L_{loc}^{1}\left(X\right)$, we can define that 
\begin{align*}
G_{{\tau _\ell }}^{{\bf{b}},{\bf{k}}}(\vec{\vf})(x)
:&={\left\{ {G_{{\tau _\ell }}^{{\bf{b}},{\bf{k}}}\left( {{f_{1,{j_1}}}, \cdots ,{f_{1,{j_m}}}} \right)(x)} \right\}_{\scriptstyle{j_i} \in \Z \hfill\atop\scriptstyle   i \in \left\{ {1, \cdots ,m} \right\}\hfill}}\\
&= {\left\{ {G\left( {{{({b_1}(x) - {b_1})}^{{\beta _1}}}{f_{1,{j_1}}}, \ldots ,{{({b_m}(x) - {b_m})}^{{\beta _m}}}{f_{1,{j_m}}}} \right)(x)} \right\}_{\scriptstyle{j_i} \in \Z \hfill\atop
\scriptstyle   i \in \left\{ {1, \cdots ,m} \right\}\hfill}}.
\end{align*}

Set $A$, $B$ are both quasi-Banach spaces. We define the mixed norm by $${\left\| h \right\|_{{A}({B})}}: = {\left\| {{{\left\| h \right\|}_{{B}}}} \right\|_{{A}}}.$$
In particular, for some $z \ge 1$, we define the norm ${\left\|  \cdot  \right\|_{{l^z}(B)}}$ by 
\[{\left\| {\left\{ {{h_i}} \right\}} \right\|_{{l^z}(B)}}: = {\left\| {\left\{ {{{\left\| {{h_i}} \right\|}_B}} \right\}} \right\|_{{l^z}}}.\]

\section{\bf Vector-valued multilinear fractional sparse form domination principle}

Given $1 \leq s \leq \infty$. Let $\mathcal{D}$ be a dyadic lattice, $Y$ be a quasi-Banach space, and $T$ be a $Y$-valued $m$-linear operator. The sharp grand maximal truncation operator is defined as
\begin{align*}
&\M_{T,s,\dada,Y}^{\#} \vec{f}(x)\\
:=&\mathop {\sup }\limits_{Q \in \mathcal{D}} {\left( {\frac{1}{{\mu {{(Q)}^2}}}\int_{Q \times Q} {{{\left\| {{T}\left( {\vec f{\chi _{X\backslash \beta Q}}} \right)\left( {{x^\prime }} \right) - {T}\left( {\vec f{\chi _{X\backslash \beta Q}}} \right)\left( {{x^{\prime \prime }}} \right)} \right\|}_Y^s}} \;d\mu ({x^\prime })d\mu ({x^{\prime \prime }})} \right)^{\frac{1}{s}}}{\chi _Q}(x),
\end{align*}

Given a function $f:X \to Y$ and a dyadic cube $Q$, we define the oscillation operation by
\begin{align*}
\operatorname{osc}_{s,Y}(f ; Q):=\left(\frac{1}{\mu(Q)^2} \int_{Q \times Q}\left\|f\left(x^{\prime}\right)-f\left(x^{\prime \prime}\right)\right\|_Y^s \mathrm{~d} \mu(x^{\prime}) \mathrm{d} \mu(x^{\prime \prime})\right)^{1 / s}
\end{align*}

The following multilinear locally weak type boundedness $W_{\vec{p}, q}(X)$ will replace multilinear weak type boundedness of $\mathcal{T}_{\eta}$, c.f. \cite{CenSong2412}.

\begin{definition}\label{def:multilinear_W}
Let \(m \in \mathbb{N}\), $\eta \in [0,m)$, $q \in (0,\infty)$, and   $p_i \in (1,\infty)$ for \(i \in \tau_m\). Set $Y_i$, $Y$ be quasi-Banach spaces with \(i \in \tau_m\), and $G$ be a $Y$-valued $m$-linear operator. We write $G \in W_{\vec{p},q}$, if there exists a non-increasing function $\Phi_{G,\vec{p}, q}:\left( {0,1} \right) \to \left( {0,\infty } \right) $, such that for every dyadic cube $Q$ and for every $f_i \in L^{p_i}(Q,Y_i)$, $1 \leq i \leq m$,
\[
 \mathop {\sup }\limits_{\lambda  \in \left( {0,1} \right)} \lambda^{-1} \mu\left(\left\{ x \in Q : 
 \|G(f_1\chi_Q, \dots, f_m\chi_Q)(x)\|_Y > \Phi_{G,\vec{p}, q}(\lambda) \mu(Q)^{\eta} \prod_{i=1}^m \langle \|f_i\|_{Y_i} \rangle_{p_i, Q} \right\}\right) \leq \mu(Q).
\]
 If $G$ is of weak type $(\vec{p}, q)$, then the $W_{\vec{p},q}$ property holds with $$\Phi_{G,\vec{p},q}(\lambda) = \|G\|_{L^{p_1}(X,Y_1) \times \cdots \times L^{p_m}(X,Y_m) \to L^{q,\infty}(X,Y)} \cdot \lambda^{-1/q}.$$
\end{definition}

Now we are ready to introduce the main theorem of this section. We call that {\tt vector-valued multilinear fractional sparse form domination principle}.

\begin{theorem}\label{SDP}
  Let $m \in \N$, $\eta \in [0,m)$, $ r_i, s' \in [1,\infty)$, and $\max\limits_{i}\{r_i\} < s$ for every $i \in \tau_m$. Let $\mathbf{k}$ be a multi-index, and multi-symbols \(\mathbf{b} = (b_1, \ldots, b_{m}) \in (L_{loc}^1(X))^m\). Set $Y_i$, $Y$ be quasi-Banach spaces with $i \in \tau_m$, and $\mathcal{T}_{\eta}$ be a $Y$-valued $m$-linear operator. Assume that $\mathcal{T}_{\eta}, \M_{\mathcal{T}_{\eta},s,\beta}^{\#} \in W_{\vec{r},\tilde{r}}$ with $\frac{1}{\tilde{r}} := \sum_{i=1}^m \frac{1}{r_i} - \eta$.
Take $\beta \geq 3 A_0^2 / \delta$, where $A_0$ is the quasi-metric constant and $\delta$ is as in Proposition \ref{cubeeq}. 
Then there exists a sparse family $\mathcal{{\tilde S}} \subseteq \d$, for every $z \in [1,\infty)$, for any $\vec{\vf} = {\left\{ {\left( {{f_{1,{j_1}}}, \cdots ,{f_{m,{j_m}}}} \right)} \right\}_{\scriptstyle{j_i} \in \left\{ {1, \cdots ,{N_i}} \right\}\hfill\atop
\scriptstyle i \in \left\{ {1, \cdots ,m} \right\}\hfill}} \subseteq \prod\limits_{i = 1}^m {L_b^\infty \left( {X,{Y_i}} \right)}$
and non-negative function $g \in L_b^{\infty}\left(X\right)$,  
  \begin{align}
  &\quad \left\langle\|\mathcal{T}_{\eta,\tau_{}}^{{\bf b, k}}(\vec{\vf})\|_{l^z(Y)},  g \right\rangle \notag\\
    &\le C \sum\limits_{\mathbf{t}_\tau} {\mathcal A}_{\mici ,\mathcal{{\tilde S}},\tau,{\vec r},s'}^\mathbf{b,k,t} \left( {{{\left\| {{{\left\{ {{f_{1,{j_1}}}} \right\}}_{{j_1} \in \left\{ {1, \cdots ,{N_1}} \right\}}}} \right\|}_{{l^z(Y_1)}}}, \ldots ,{{\left\| {{{\left\{ {{f_{m,{j_m}}}} \right\}}_{{j_m} \in \left\{ {1, \cdots ,{N_m}} \right\}}}} \right\|}_{{l^z(Y_m)}}},g} \right)\label{m+1.spa_0}\\
    &\le C \sum\limits_{\mathbf{t}_\tau} \sum_{\tau' \subseteq \tau}  {\mathcal B}_{\mici ,\mathcal{{\tilde S}},\tau,\tau',{\vec r},s'}^\mathbf{b,k} \left( {{{\left\| {{{\left\{ {{f_{1,{j_1}}}} \right\}}_{{j_1} \in \left\{ {1, \cdots ,{N_1}} \right\}}}} \right\|}_{{l^z(Y_1)}}}, \ldots ,{{\left\| {{{\left\{ {{f_{m,{j_m}}}} \right\}}_{{j_m} \in \left\{ {1, \cdots ,{N_m}} \right\}}}} \right\|}_{{l^z(Y_m)}}},g} \right),\label{m+1.spa}  
 \end{align}
 where there exists a constant $C_{\tau, {\bf k}, c_1, c_2} \in (1, \infty)$ 
 with $c_1, c_2$ being constants such that $c_1 \geq 1$ and $c_2 \geq 2$, satisfying the following:
 \begin{align}\label{constants}
C := C_{\tau, {\bf k}, c_1, c_2} \left( \Phi_{\mathcal{T}_{\eta}, \vec{r}, \tilde{r}}\left( \frac{1}{6|\tau|(|{\bf k}| + 1) c_1 c_2} \right) + \Phi_{\M^{\#}_{\mathcal{T}_{\eta, s,\dada}}, \vec{r}, \tilde{r}}\left( \frac{1}{6|\tau|(|{\bf k}| + 1) c_1 c_2} \right) \right).
 \end{align}
Additionally, $\sum\limits_{\mathbf{t}_\tau}$ means that $\sum\limits_{t_{\tau(1)}=0}^{k_{\tau(1)}} \sum\limits_{t_{\tau(2)}=0}^{k_{\tau(2)}} \cdots \sum\limits_{t_{\tau(|\tau|)}=0}^{k_{\tau(|\tau|)}}$.

\end{theorem}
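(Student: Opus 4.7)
The plan is to adapt the three-step dyadic sparse domination scheme of Lerner--Lorist--Nieraeth to the present stochastic fractional, vector-valued, higher-order multi-symbol commutator setting. By Proposition~\ref{cubeeq} and Lemma~\ref{lem.covering}, it suffices to fix one of the $N$ adjacent dyadic systems $\mathcal{D}$ and, for each $Q_0 \in \mathcal{D}$ containing the supports of $g$ and of every $f_{i,j_i}$, to construct a $\tfrac12$-sparse family $\tilde{\mathcal{S}} \subseteq \mathcal{D}(Q_0)$ whose $\mathcal{A}$-form dominates $\langle \|\mathcal{T}^{\mathbf{b,k}}_{\alpha(\eta),\tau}(\vec{\mathbf{f}})\|_{l^z(Y)}, g\rangle$. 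The passage from \eqref{m+1.spa_0} to \eqref{m+1.spa} is then immediate upon applying Proposition~\ref{reduce} to each summand in $\mathbf{t}_\tau$, so the entire burden lies on the first inequality. The commutator will be opened once and for all by the binomial identity
\[
(b_i(x)-b_i(y))^{k_i} \;=\; \sum_{t_i=0}^{k_i}\binom{k_i}{t_i}\,(b_i(x)-b_{i,Q})^{k_i-t_i}\,(-1)^{t_i}\,(b_i(y)-b_{i,Q})^{t_i},
\]
which pulls the $x$-dependent factor outside $\mathcal{T}_{\alpha(\eta)}$ and produces precisely the asymmetric split of exponents $(k_i-t_i)$ attached to $g$ and $t_i$ attached to $f_i$ appearing inside $\mathcal{A}^{\mathbf{b,k,t}}_{\alpha(\eta),\mathcal{S},\tau,\vec{r},t}$.

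\textbf{Stopping-time construction.} Fix $Q_0$, constants $c_1 \ge 1$ and $c_2 \ge 2$, and set $\lambda := \frac{1}{6|\tau|(|\mathbf{k}|+1)c_1 c_2}$. I take the stopping children of $Q_0$ to be the maximal $P \in \mathcal{D}(Q_0)$ on which at least one of the following fails: (i) $\langle \|\mathcal{T}_{\alpha(\eta)}(\vec{\mathbf{f}}\chi_{\beta Q_0})\|_{l^z(Y)}\rangle_P \le c_2\,\Phi_{\mathcal{T}_{\alpha(\eta)},\vec{r},\tilde{r}}(\lambda)\,\mu(Q_0)^{\alpha(\eta)}\prod_i\langle \|f_{i,j_i}\|_{l^z(Y_i)}\rangle_{r_i,\beta Q_0}$; (ii) the analogous inequality for $\mathcal{M}^{\#}_{\mathcal{T}_{\alpha(\eta)},t',\beta}$; (iii) $\langle |b_i-b_{i,Q_0}|^{k_i}\rangle_{\beta P} \le c_1 \|b_i\|^{k_i}_{\mathrm{BMO}}$ for every $i \in \tau$; (iv) $\langle \|f_{i,j_i}\|_{l^z(Y_i)}\rangle_{r_i,\beta P} \le c_1 \langle\|f_{i,j_i}\|_{l^z(Y_i)}\rangle_{r_i,\beta Q_0}$ for every $(i,j_i)$. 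Applying the $W_{\vec{r},\tilde{r}}$ hypotheses of Definition~\ref{def:multilinear_W} to $\mathcal{T}_{\alpha(\eta)}$ and $\mathcal{M}^{\#}_{\mathcal{T}_{\alpha(\eta)},t',\beta}$ at level $\lambda$ bounds the bad sets for (i) and (ii), while Lemma~\ref{BMOeq.} together with the weak-$(1,1)$ inequality for the dyadic maximal function handles (iii) and (iv). Summing the four estimates gives $\mu\bigl(\bigcup_\ell P_\ell\bigr) \le \tfrac12\mu(Q_0)$, so iterating inside each stopping child produces a $\tfrac12$-sparse $\tilde{\mathcal{S}}$.

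\textbf{Good-part estimate and main obstacle.} On the good part $E_{Q_0} := Q_0 \setminus \bigcup_\ell P_\ell$ I split $\mathcal{T}^{\mathbf{b,k}}_{\alpha(\eta),\tau}(\vec{\mathbf{f}}) = \mathcal{T}^{\mathbf{b,k}}_{\alpha(\eta),\tau}(\vec{\mathbf{f}}\chi_{\beta Q_0}) + \mathcal{T}^{\mathbf{b,k}}_{\alpha(\eta),\tau}(\vec{\mathbf{f}}\chi_{X\setminus\beta Q_0})$. For the exterior piece, adding and subtracting a constant-in-$x$ average of $\mathcal{T}^{\mathbf{b,k}}_{\alpha(\eta),\tau}(\vec{\mathbf{f}}\chi_{X\setminus\beta Q_0})$ over $Q_0$ produces an error bounded pointwise by the oscillation $\operatorname{osc}_{t',Y}$ and hence by $\mathcal{M}^{\#}_{\mathcal{T}_{\alpha(\eta)},t',\beta}$ via condition (ii); the assumption $\max_i r_i < t'$ enters here through H\"older in the $L^{t'}$-oscillation in order to absorb it into the $\mathcal{A}$-form. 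The interior piece is dominated directly by its own average over $Q_0$, controlled by (i). Inserting the binomial identity from the first paragraph and pairing against $g\chi_{E_{Q_0}}$ produces, after summation over $\mathbf{t}_\tau \le \mathbf{k}_\tau$, exactly the $\mathcal{A}^{\mathbf{b,k,t}}$ building block at $Q_0$. Recursing inside each stopping child and telescoping yields \eqref{m+1.spa_0}, with the constant $C$ of \eqref{constants} emerging as $C_{\tau,\mathbf{k},c_1,c_2}$ times $\Phi_{\mathcal{T}_{\alpha(\eta)},\vec{r},\tilde{r}}(\lambda) + \Phi_{\mathcal{M}^{\#}_{\mathcal{T}_{\alpha(\eta)},t',\beta},\vec{r},\tilde{r}}(\lambda)$. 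I expect the main technical obstacle to be the simultaneous calibration of the thresholds in (i)--(iv): each of the four bad sets has its measure controlled by a separate mechanism, yet their union must stay below $\tfrac12 \mu(Q_0)$ uniformly in $z$, in the sequence lengths $N_i$, and in the $\mathrm{BMO}$-symbols $b_i$, which is exactly what forces the explicit choice $\lambda = \frac{1}{6|\tau|(|\mathbf{k}|+1)c_1 c_2}$. The vector-valuedness in $l^z(Y_i)$ is absorbed inside the $W_{\vec{r},\tilde{r}}$ hypothesis, which is stated for generic quasi-Banach codomain, so passing $\|\cdot\|_{l^z(Y)}$ through the construction causes no additional loss.
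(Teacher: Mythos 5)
Your overall architecture (local $\tfrac12$-sparse construction inside a fixed dyadic cube, binomial expansion of $(b_i(x)-b_i(y))^{k_i}$ about $b_{i,\beta Q}$, oscillation/$\M^{\#}$ comparison on stopping children, then the covering argument of Proposition \ref{cubeeq}--Lemma \ref{lem.covering}, with Proposition \ref{reduce} giving the second inequality) is the same as the paper's. However, your stopping-time conditions contain a genuine gap. First, condition (iii) calibrates the stopping cubes against $\|b_i\|_{\mathrm{BMO}}^{k_i}$ and you propose to bound its bad set via Lemma \ref{BMOeq.}; but the theorem only assumes $\mathbf{b}\in (L^1_{loc}(X))^m$, and the whole point of the form ${\mathcal A}_{\mici,\S,\tau,\vec r,t}^{\mathbf{b,k,t}}$ is that the oscillation factors stay \emph{inside} the local averages so that no BMO hypothesis is needed at the domination stage (BMO/Bloom conditions only enter later, in Sections \ref{Bloom.estimate.}--\ref{sharp}). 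For merely locally integrable $b_i$ your threshold in (iii) is infinite or vacuous and the measure estimate for that bad set has no proof, so your construction only covers the case $\mathbf{b}\in\mathrm{BMO}^m$, which is strictly weaker than the claim.

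Second, and relatedly, your weak-type conditions (i)--(ii) are applied to the \emph{plain} inputs $\vec{\mathbf f}\chi_{\beta Q_0}$ with the averages $\prod_i\langle\|f_{i,j_i}\|\rangle_{r_i,\beta Q_0}$, whereas after the binomial expansion the objects you must control on the good part are $\mathcal T_{\alpha(\eta)}\bigl(((b_i-b_{i,\beta P})^{t_i}f_{i,j_i}\chi_{\beta P})_{i\in\tau},(f_{j,j_j}\chi_{\beta P})_{j\in\tau^c}\bigr)$ for \emph{every} $\mathbf t\le\mathbf k$, together with the corresponding $\M^{\#}_{\mathcal T_{\alpha(\eta)},t',\beta}$ and $M_{r_i}$ applied to these modified inputs. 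The $W_{\vec r,\tilde r}$ information about $\mathcal T(\vec f\chi_{\beta Q_0})$ does not control these, and trying to compensate through (iii)--(iv) (i.e.\ separating $b$ from $f$ by H\"older and BMO) both requires the unwarranted BMO assumption and raises the integrability exponent on $f_i$ above $r_i$, so it cannot reproduce the averages $\langle|f_i(b_i-b_{i,Q})^{t_i}|\rangle_{r_i,Q}$ appearing in ${\mathcal A}^{\mathbf{b,k,t}}$. This is exactly why the paper defines the exceptional sets $\Omega_{\mathbf t,\tau}(P)$, $\M_{\mathbf t,\tau}(P)$ and $M^1_{\mathbf t,i}(P)$, $M^2_{\mathbf t,i}(P)$ with the $b$-modified inputs and takes the union over $i\in\tau$ and $t_i\in\{0,\dots,k_i\}$ — the factor $|\tau|(|\mathbf k|+1)$ in $\lambda=\frac{1}{6|\tau|(|\mathbf k|+1)c_1c_2}$ counts precisely these sets. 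You quote the same $\lambda$, but your four conditions do not have the structure that makes it necessary or sufficient; as written, the good-part and stopping-children estimates cannot be closed in the claimed form. Fixing the proposal amounts to replacing (i)--(iv) by the $\mathbf t$-indexed family of bad sets for the modified inputs (plus the $M_{r_i}$-sets), after which the rest of your outline goes through as in the paper.
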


\, \hspace{-20pt}{\bf Remark A}. 
{\it\
\begin{itemize}
\item 
We use $\M_{T,s,\dada}^{\#}$ instead of the classic multilinear operators $\M_{T,\infty}^{\#}$ and $\M_{T,\infty}$. This is because $\M_{T,s,\dada}^{\#}$ exhibits more flexible and broader properties, which makes our theorem more general, c.f. \cite{CenSong2412, Cao2018, Nier2020, Lorist2021}.
\item
The multilinear locally weak type boundedness $W_{\vec{p}, q}(X)$ allows us to weaken the classical sparse domination premise for target operator. In nearly a decade of sparse domination theory, authors always used the endpoint weak type estimates for target operators, c.f. \cite{CenSong2412, Cao2018,Lerner2019, Nier2020, Lorist2021}.
\item The condition $\max\limits_{i \in \tau_m}\{r_i\} < s$ is necessary. On the one hand, it ensures that the assumption $\M_{\mathcal{T}_{\eta},s,\beta}^{\#} \in W_{\vec{r},\tilde{r}}$ is reasonable. On the other hand, it guarantees the validity of the theorem when it reduces to classical cases such as \cite[lemma 2.1]{CenSong2412}, \cite[Lemma 4.1]{Cao2018} and the assumptions in \cite[Theorem 1.1]{Li2018}. While $m=1$ and \((\vec{r}, s) \stackrel{}{\preceq} (\vec{p}, q)\) with $\eta = 0$, if $r \geq s$, then we will find that we obtain $r \leq p < s \le r$, there is a contradiction.
\end{itemize}

}

\begin{proof}[Proof of Theorem~\ref{SDP}]\label{MSFSDP}
The second inequality in \eqref{m+1.spa} is easily derived from Proposition \ref{reduce}. It suffices to prove \eqref{m+1.spa_0}.

The first thing that requires attention is the local version of \eqref{m+1.spa_0}. 
Fix a dyadic cube $Q \in \d$, which contains the supports of $\vec {\vf}$ and $g$, we will show that there exists a $\frac{1}{2}$-sparse family $\mathcal{F} \subseteq \mathcal{D}(Q)$ such that
\begin{align}\notag
   &\left\langle \left\| \left\{\mathcal{T}_{\eta,\tau_{}}^{{\bf b, k}}(\vec{\vf})(x) \right\}\right\|_{l^z(Y)},g \right\rangle =\left\langle \left\| \left\{\mathcal{T}_{\eta,\tau_{}}^{{\bf b, k}}(\vec{\vf}\chi_{\dada Q})(x) \right\}\right\|_{l^z(Y)},g\chi_Q\right\rangle
  \\ \notag
  \le & C_{} \sum\limits_{\mathbf{t}_\tau} \left(\sum_{Q \in \mathcal{F}} \mu(P)^{\eta + 1} \right.  \prod_{i \in \tau}\left\langle\left\|\left\{f_{i,j_i} (b_i - b_{i,\dada P})^{t_i}\right\}\right\|_{l^z(Y_i)}\right\rangle_{r_i,\dada P} \\ \label{eq3.3}
  &\times \left\langle\left(\prod_{i \in \tau}  \left|b_i(x) - b_{i,\dada P}\right|^{k_i - t_i}\right)g \right\rangle_{s',\dada P} \left.  \prod_{i \in \tau^c}\langle\|\{f_{i,j_i}\}\|_{l^z(Y_i)}\rangle_{r_i,\dada P}\right).
\end{align}
where $C$ is defined in \eqref{constants}. Next, we will construct the $\frac{1}{2}$-sparse family of cubes $\mathcal{F}$ iteratively. Setting $\mathcal{F}_0=\{Q\}$, given a collection of pairwise disjoint cubes $\mathcal{F}_j$, we focus on how to construct $\mathcal{F}_{j+1}$.

For $i \in \tau$, $t_i \in \{0, \ldots, k_i\}$. Fix a cube $P \in \mathcal{F}_j$, consider the sets

\begin{align*}
  \Omega_{{\bf t},\tau}(P)& := \left\{ x \in P : \left\|\left\{ \mathcal{T}_{\eta} \left( \left( \left( b_i - b_{i,\dada P} \right)^{t_i} f_{i,j_i} \chi_{\dada P} \right)_{i \in \tau}, \left( f_{i,j_i}\chi_{\dada P} \right)_{i \in \tau^c} \right) (x)\right\} \right\|_{l^z(Y)}\right. \\
  &  > \Phi_{\mathcal{T}_{\eta}, \vec{r},\tilde{r}} \left( \frac{1}{6|\tau|(|{\bf k}|+1)c_1c_2} \right)\\
  &\left. \times \mu(\dada P)^{\eta}  \prod_{i \in \tau}\langle\left\|\{f_{i,j_i} (b_i - b_{i,\dada P})^{t_i}\right\}\|_{l^z(Y_i)}\rangle_{r_i,\dada P}\prod_{i \in \tau^c}\langle\left\|\{f_{i,j_i}\}\right\|_{l^z(Y_i)}\rangle_{r_i,\dada P} \right\}
  \end{align*}  
and
\begin{align*}
  \M_{{\bf t},\tau}(P) &:= \left\{ x \in P : \left\| \left\{\M_{\mathcal{T}_{\eta},s,\beta,Y}^{\#} \left( \left( \left( b_i - b_{i,\dada P} \right)^{t_i} f_{i,j_i} \chi_{\dada P} \right)_{i \in \tau}, \left( f_{i,j_i}\chi_{\dada P} \right)_{i \in \tau^c} \right) (x) \right\}\right\|_{l^z} \right. \\
  &   > \Phi_{\M^{\#}_{\mathcal{T}_{\eta,s,\dada,Y}}, \vec{r},\tilde{r}} \left( \frac{1}{6|\tau|(|{\bf k}|+1)c_1c_2} \right) \\
&\left.\times\mu(\dada P)^{\eta}\prod_{i \in \tau}\left\langle\left\|\left\{f_{i,j_i} (b_i - b_{i,\dada P})^{t_i}\right\}\right\|_{l^z(Y_i)}\right\rangle_{r_i,\dada P}\prod_{i \in \tau^c}\langle\left\|\{f_{i,j_i}\}\right\|_{l^z(Y_i)}\rangle_{r_i,\dada P} \right\}.
  \end{align*}
  Let $c_1 \geq 1$, depending on $X, \d$ and $\dada$, be such that $\mu(\dada P) \leq$ $c_1 \mu(P)$. Then
\begin{align*}
   \mu(\Omega_{{\bf t},\tau}(P)) \leq \frac{1}{6|\tau|(|{\bf k}|+1)c_1c_2}\mu(\dada P) \leq \frac{1}{6|\tau|({\bf k}+1)c_2}\mu(P),
   \end{align*}
and the same bound holds for $\mu\left(\M_{{\bf t},\tau}(P)\right)$. Since the maximal operator $M_r$ is weak $L^r$-bounded with constant independent of $r$, there exists a $c_{\bf t}>0$ such that
\begin{align*}
M^1_{{\bf t},i}(P)&:=\left\{x \in P: \left\|\left\{M_{r_i}\left(\|\left(b_i - b_{i,\dada P} \right)^{t_i} f_{i,j_i} \chi_{\dada P}\|_{Y_i}\right)(x)\right\}_{ j_i}\right\|_{l^z}\right.\\
&\left. >c_{\bf t}\left\langle \left\| \left\{\left(b_i - b_{i,\dada P} \right)^{t_i} f_{i,j_i} \chi_{\dada P}\right\}_{ j_i}\right\|_{l^z(Y_i)} \right\rangle_{r_i, \dada P}\right\},
\end{align*}
and
\begin{align*}
  M^2_{{\bf t},i}(P)&:=\left\{x \in P: \left\|\{M_{r_i}\left(\|f_{i,j_i}\|_{Y_i}\right)(x)\}_{j_i}\right\|_{l^z}>c_{\bf t}\langle \| \{f_{i,j_i}\}\|_{l^z(Y_i)} \rangle_{r_i, \dada P}\right\}.
\end{align*}

We define $M_{{{\bf t},i},\tau}(P) := M^1_{{\bf t},i}(P) \cup  M^2_{{\bf t},i}(P)$ and it holds that
\begin{align*}
\mu(M_{{{\bf t},i},\tau}(P)) \leq \frac{1}{6|\tau|({\bf k}+1)c_2}\mu(P) .
\end{align*}
Therefore, setting
\begin{align*}
\Omega(P):=\bigcup_{i \in \tau}\bigcup_{t_i=0}^{k_i}\left(\Omega_{{\bf t},\tau}(P) \cup \mathcal{M}_{{\bf t},\tau}(P) \cup M_{{\bf t},i,\tau}(P)\right),
\end{align*}
we have $\mu(\Omega(P)) \leq \frac{1}{2c_2}\mu(P)$.

Based on above, we obtain pairwise disjoint cubes \( \S_P \subseteq \d(P) \) by applying the Calderón--Zygmund decomposition to \( \chi_{\Omega(P)} \) on \( P \) at the height $\lambda = \frac{1}{c_2}$, such that $\mu\left(\Omega(P) \backslash \bigcup\limits_{P^{\prime} \in \mathcal{S}_P} P^{\prime}\right)=0$ and for every $P^{\prime} \in \mathcal{S}_P$,
\begin{align}\label{eq3.4}
  \frac{1}{c_2}\mu(P^{\prime}) \leq \mu(P^{\prime} \cap \Omega(P)) \leq \frac{1}{2} \mu(P^{\prime}).
\end{align}
In particular, it follows that
\begin{align}\label{eq3.5}
\sum_{P^{\prime} \in \mathcal{S}_P} \mu(P^{\prime}) \leq {c_2}\mu(\Omega(P)) \leq \frac{1}{2}\mu(P).
\end{align}
We define $\mathcal{F}_{j+1}=\bigcup\limits_{P \in \mathcal{F}_j} \mathcal{S}_P$. Setting $\mathcal{F}=\bigcup\limits_{j=0}^{\infty} \mathcal{F}_j$, we note by \eqref{eq3.5} that $\mathcal{F}$ is $\frac{1}{2}$-sparse.

For $j \in \mathbb{N}$ and $P \in \mathcal{F}_j$, the proof of \eqref{eq3.3} is reduced to prove the following recursive inequality
\begin{align}\notag
  & \left\langle \left\| \left\{\mathcal{T}_{\eta,\tau_{}}^{{\bf b, k}}(\vec{\vf}\chi_{\beta P})(x) \right\}\right\|_{l^z(Y)},g\chi_P\right\rangle
  \\ \notag
  \le & C \sum\limits_{\mathbf{t}_\tau} \mu(P)^{\eta + 1} \prod_{i \in \tau}\left\langle\left\|\{f_{i,j_i} (b_i - b_{i,\dada P})^{t_i}\right\}\|_{l^z(Y_i)}\right\rangle_{r_i,\dada P}\\ \notag
  &\times \left\langle\left(\prod_{i \in\tau }  \left|b_i(x) - b_{i,\dada P}\right|^{k_i - t_i}\right)g \right\rangle_{s',\dada P}  \prod_{i \in \tau^c}\langle\left\|\{f_{i,j_i}\}\right\|_{l^z(Y_i)}\rangle_{r_i,\dada P}\\
   &\quad + \sum_{P^{\prime} \in \mathcal{F}_{j+1}: P^{\prime} \subseteq P} \left\langle \left\| \left\{\mathcal{T}_{\eta,\tau_{}}^{{\bf b, k}}(\vec{\vf}\chi_{\beta P'})(x) \right\}\right\|_{l^z(Y)},g\chi_{P'}\right\rangle.
\end{align}
Set $F_j:=\bigcup\limits_{P \in \mathcal{F}_j} P$. Noting that
\begin{align*}
   \left\langle \left\| \left\{\mathcal{T}_{\eta,\tau_{}}^{{\bf b, k}}(\vec{\vf}\chi_{\beta P})(x) \right\}\right\|_{l^z(Y)},g\chi_P\right\rangle \leq & \int_{P \backslash F_{j+1}}\left\|\left\{\mathcal{T}_{\eta,\tau_{}}^{{\bf b, k}}(\vec{\vf}\chi_{\dada P})\right\}\right\|_{l^z(Y)} \mid g \mid \\
& +\sum_{P^{\prime} \in \mathcal{F}_{j+1}: P^{\prime} \subseteq P} \int_{P^{\prime}}\left\|\left\{\mathcal{T}_{\eta,\tau_{}}^{{\bf b, k}}\left(\vec {\vf} \chi_{\dada P  \backslash \dada P'}\right)\right\}\right\|_{l^z(Y)}|g| \\
& +\sum_{P^{\prime} \in \mathcal{F}_{j+1}: P^{\prime} \subseteq P} \int_{P^{\prime}}\left\|\left\{\mathcal{T}_{\eta,\tau_{}}^{{\bf b, k}}\left(\vec {\vf} \chi_{\dada P'}\right)\right\}\right\|_{l^z(Y)}|g|,
\end{align*}
it thus suffices to show that
\begin{align}\notag
  &\quad \int_{P \backslash F_{j+1}}\left\|\left\{\mathcal{T}_{\eta,\tau_{}}^{{\bf b, k}}(\vec{\vf}\chi_{\dada P})\right\}\right\|_{l^z(Y)}\mid g \mid +\sum_{P^{\prime} \in \mathcal{F}_{j+1}: P^{\prime} \subseteq P} \int_{P^{\prime}}\left\|\left\{\mathcal{T}_{\eta,\tau_{}}^{{\bf b, k}}\left(\vec \vf \chi_{\dada P  \backslash \dada P'}\right)\right\}\right\|_{l^z(Y)}|g|\\ \notag
  & \le  C\sum\limits_{\mathbf{t}_\tau} \mu(P)^{\eta + 1} \prod_{i \in \tau}\left\langle\left\|\left\{f_i (b_i - b_{i,\dada P})^{t_i}\right\}\right\|_{l^z(Y_i)}\right\rangle_{r_i,\dada P}\\ \label{eq3.6}
&\quad\times\left\langle\left(\prod_{i \in \tau}  \left|b_i(x) - b_{i,\dada P}\right|^{k_i - t_i}\right)g \right\rangle_{s',\dada P}  \prod_{i \in \tau^c}\langle\left\|\{f_{i,j_i}\}\right\|_{l^z(Y_i)}\rangle_{r_i,\dada P}.
\end{align}

We first consider the first term on the left-hand side of \eqref{eq3.6}. Note that 
\begin{align*}
  &\quad \prod_{i \in \tau} (b_i(x) - b_i(y_i))^{k_i} \\
  &= \sum_{t_{\tau(1)}=0}^{k_{\tau(1)}} \sum_{t_{\tau(2)}=0}^{k_{\tau(2)}} \cdots \sum_{t_{\tau({|\tau|})}=0}^{k_{\tau({|\tau|})}} \Big( \prod_{i \in \tau}C_{k_i}^{t_i} (-1)^{t_i} \left(b_i(x) - b_{i,\dada P}\right)^{k_i - t_i} \left(b_i(y_i) - b_{i,\dada P}\right)^{t_i} \Big).
\end{align*}

For any $c \in \mathbb{C}$, we have by definition of $\Omega_{{\bf t},\tau}(P)$, combining with the definition of $\mathcal{T}_{\eta,\tau}^{{\bf b, k}}$, we conclude that
\begin{align}\notag
  &\quad \int_{P \backslash F_{j+1}}\left\|\left\{\mathcal{T}_{\eta,\tau_{}}^{{\bf b, k}}(\vec{\vf}\chi_{\dada P})\right\}\right\|_{l^z(Y)} \mid g \mid \\ \label{eq3.7}
&\leq \sum\limits_{\mathbf{t}_\tau} \int_{P \backslash F_{j+1}}\left\|\left\{\mathcal{T}_{\eta}\left(\left( \left( b_i - b_{i,\dada P} \right)^{t_i} f_{i,j_i} \chi_{\dada P} \right)_{i \in \tau}, \left( f_{i,j_i}\chi_{\dada P} \right)_{i \in \tau^c}\right)\right\}\right\|_{l^z(Y)}\prod_{i \in \tau}  \left|b_i - b_{i,\dada P}\right|^{k_i - t_i} \mid g \mid\\ \notag
& \leq C_1 \sum\limits_{\mathbf{t}_\tau}  \mu(P)^{\eta + 1} \\ \notag
&\times \prod_{i \in \tau}\left\langle\left\|\left\{f_{i,j_i} (b_i - b_{i,\dada P})^{t_i}\right\}\right\|_{l^z(Y_i)}\right\rangle_{r_i,\dada P}\left\langle\left(\prod_{i = 1}^m  \left|b_i - b_{i,\dada P}\right|^{k_i - t_i}\right)g \right\rangle_{1,\dada P}  \prod_{i \in \tau^c}\langle\left\|\left\{f_{i,j_i}\right\}\right\|_{l^z(Y_i)}\rangle_{r_i,\dada P},
\end{align}
where  $C_1:=c_1 \cdot 2^{\sum\limits_{i \in \tau} k_i}  \Phi_{\mathcal{T}_{\eta}, \vec{r},\tilde{r}} \left( \frac{1}{6|\tau|(|{\bf k}|+1)c_1c_2} \right)$.

Now consider the second term in \eqref{eq3.6}. Fix $P^{\prime} \in \mathcal{F}_{j+1}$ such that $P^{\prime} \subseteq P$ and denote
\begin{align*}
  \boldsymbol{\psi}_{{\bf k},\tau}(y) := \mathcal{T}_{\eta}\left(\left( \left( b_i - b_{i,\dada P} \right)^{t_i} f_{i,j_i} \chi_{\dada P \backslash \dada P'} \right)_{i \in \tau}, \left( f_{i,j_i}\chi_{\dada P \backslash \dada P'} \right)_{i \in \tau^c}\right)(y),\quad y \in X.
\end{align*}
Then, for $y \in P^{\prime}$ to be specified later we have
\begin{align}\notag
 &\int_{P^{\prime}}\left\|\left\{\mathcal{T}_{\eta,\tau_{}}^{{\bf b, k}}\left(\vec \vf \chi_{\dada P  \backslash \dada P^{\prime}}\right)\right\}\right\|_{l^z(Y)}|g| \leq 2^{\sum\limits_{i \in \tau} k_i} \sum\limits_{\mathbf{t}_\tau} \int_{P^{\prime}}\|\boldsymbol{\psi}_{k_i,\tau}\|_{l^z(Y)}\prod_{i \in \tau}  \left|b_i - b_{i,\dada P}\right|^{k_i - t_i} \mid g \mid\\ \label{eq3.8}
 &\quad \leq 2^{\sum\limits_{i \in \tau} k_i}\sum\limits_{\mathbf{t}_\tau} \int_{P^{\prime}}\|\boldsymbol{\psi}_{{\bf k},\tau}(x)- \boldsymbol{\psi}_{{\bf k},\tau}(y)\|_{l^z(Y)}\prod_{i \in \tau}  \left|b_i(x) - b_{i,\dada P}\right|^{k_i - t_i} \mid g(x) \mid d\mu(x)\\ \notag
 &\quad \quad + 2^{\sum\limits_{i \in \tau} k_i}\sum\limits_{\mathbf{t}_\tau} \int_{P^{\prime}}\| \boldsymbol{\psi}_{{\bf k},\tau}(y)\|_{l^z(Y)}\prod_{i \in \tau}  \left|b_i(x) - b_{i,\dada P}\right|^{k_i - t_i} \mid g(x) \mid d\mu(x).
\end{align}
 Consider the set
\begin{align*}
  {\widetilde{\Omega}}_{{\bf t},\tau}(P') &:= \left\{ x \in P' : \left\|\left\{ \mathcal{T}_{\eta} \left( \left( \left( b_i - b_{i,\dada P} \right)^{t_i} f_{i,j_i} \chi_{\dada P'} \right)_{i \in \tau}, \left( f_{i,j_i}\chi_{\dada P'} \right)_{i \in \tau^c} \right) (x)\right\} \right\|_{l^z(Y)} \right. \\
  &\quad  > \Phi_{\mathcal{T}_{\eta}, \vec{r},\tilde{r}} \left( \frac{1}{4c_1|\tau|(|{\bf k}|+1) } \right)\\
  &\left.\mu(\dada P')^{\eta}  \prod_{i \in \tau}\left\langle\left\|\left\{f_{i,j_i} (b_i - b_{i,\dada P'})^{t_i}\right\}\right\|_{l^z(Y_i)}\right\rangle_{r_i,\dada P'}\prod_{i \in \tau^c}\langle\left\|\{f_{i,j_i}\}\right\|_{l^z(Y_i)}\rangle_{r_i,\dada P'} \right\}
  \end{align*} 
  Set $\widetilde{\Omega}\left(P'\right):=\bigcup\limits_{i \in \tau}\bigcup\limits_{t_i=0}^{k_i} \widetilde{\Omega}_{{\bf t},\tau}\left(P'\right)$, for which we have $\mu\left(\widetilde{\Omega}\left(P'\right)\right) \leq \frac{1}{4}\mu\left(P'\right)$. Now, define the good part of the cube $P'$ as
  \begin{align*}
  G_{P'}:=P^{\prime} \backslash\left(\Omega(P) \cup \widetilde{\Omega}\left(P'\right)\right)
  \end{align*}
  Then, by \eqref{eq3.4}, we have
  \begin{align*}
  \mu(G_{P^{\prime}}) \geq\left(\frac{1}{2}-\frac{1}{4}\right)\mu(P^{\prime})=\frac{1}{4}\mu(P^{\prime})
  \end{align*}
  and for all $y \in G_{P^{\prime}}$,
\begin{align*}
  \left\| \boldsymbol{\psi}_{{\bf k},\tau}(y)\right\|_{l^z(Y)} &\leq \left\|\left\{ \mathcal{T}_{\eta} \left( \left( \left( b_i - b_{i,\dada P} \right)^{t_i} f_{i,j_i} \chi_{\dada P} \right)_{i \in \tau}, \left( f_{i,j_i}\chi_{\dada P} \right)_{i \in \tau^c} \right) (x) \right\}\right\|_{l^z(Y)}\\
&\quad + \left\| \left\{\mathcal{T}_{\eta} \left( \left( \left( b_i - b_{i,\dada P} \right)^{t_i} f_{i,j_i} \chi_{\dada P'} \right)_{i \in \tau}, \left( f_{i,j_i}\chi_{\dada P'} \right)_{i \in \tau^c} \right) (x) \right\}\right\|_{l^z(Y)}\\
&\leq \Phi_{\mathcal{T}_{\eta}, \vec{r},\tilde{r}} \left( \frac{1}{6|\tau|(|{\bf k}|+1)c_1c_2} \right)\\
&\times\mu(\dada P)^{\eta}  \prod_{i \in \tau}\left\langle\left\|\{f_{i,j_i} (b_i - b_{i,\dada P})^{t_i}\right\}\|_{l^z(Y_i)}\right\rangle_{r_i,\dada P}\prod_{i \in \tau^c}\langle\left\|\{f_{i,j_i}\}\right\|_{l^z(Y_i)}\rangle_{r_i,\dada P}\\
&\quad + \Phi_{\mathcal{T}_{\eta}, \vec{r},\tilde{r}} \left( \frac{1}{4c_1|\tau|(|{\bf k}|+1) } \right)\\
&\quad\times \mu(\dada P')^{\eta}  \prod_{i \in \tau}\left\langle\left\|\{f_{i,j_i} (b_i - b_{i,\dada P})^{t_i}\right\}\|_{l^z(Y_i)}\right\rangle_{r_i,\dada P'}\prod_{i \in \tau^c}\langle\left\|\{f_{i,j_i}\}\right\|_{l^z(Y_i)}\rangle_{r_i,\dada P'}.
\end{align*}
Further, by the definition of $M_{\bf{t},\tau}(P)$,
\begin{align*}
  &\quad\prod_{i \in \tau}\left\langle\left\|\{f_{i,j_i} (b_i - b_{i,\dada P})^{t_i}\right\}\|_{l^z(Y_i)}\right\rangle_{r_i,\dada P'}\prod_{i \in \tau^c}\langle\left\|\{f_{i,j_i}\}\right\|_{l^z(Y_i)}\rangle_{r_i,\dada P'} \\
  &\leq \prod_{i \in \tau}\left\langle\left\|\{f_{i,j_i} (b_i - b_{i,\dada P})^{t_i}\right\}\|_{l^z(Y_i)}\right\rangle_{r_i,\dada P}\prod_{i \in \tau^c}\langle\left\|\{f_{i,j_i}\}\right\|_{l^z(Y_i)}\rangle_{r_i,\dada P}.
\end{align*}
Hence, for all $y \in G_{P^{\prime}}$, we have
\begin{align*}
  &\quad \left\| \boldsymbol{\psi}_{{\bf k},\tau}(y)\right\|_{l^z(Y)} \\
  &\leq 2 c_{\bf{t}} \Phi_{\mathcal{T}_{\eta}, \vec{r},\tilde{r}} \left( \frac{1}{6|\tau|(|{\bf k}|+1)c_1c_2} \right)\\
  &\quad \times\mu(\dada P)^{\eta}  \prod_{i \in \tau}\left\langle\left\|\{f_{i,j_i} (b_i - b_{i,\dada P})^{t_i}\}\right\|_{l^z(Y_i)}\right\rangle_{r_i,\dada P}\prod_{i \in \tau^c}\langle\left\|\{f_{i,j_i}\}\right\|_{l^z(Y_i)}\rangle_{r_i,\dada P}.
\end{align*}
From this, integrating \eqref{eq3.8} over $y \in G_{P^{\prime}}$, using Hölder's inequality and the definition of the set $\M_{{\bf t},\tau}(P)$, we obtain
\begin{align*}
&\int_{P^{\prime}}\left\|\left\{\mathcal{T}_{\eta,\tau_{}}^{{\bf b, k}}\left(\vec \vf \chi_{\dada P  \backslash \dada P'}\right)\right\}\right\|_{l^z(Y)}|g|\\
  &\leq  2^{\sum\limits_{i \in \tau} k_i+2} \sum\limits_{\mathbf{t}_\tau}\left\|\mathrm{osc}_{s,Y}\left(\left\{\mathcal{T}_{\eta} \left( \left( \left( b_i - b_{i,\dada P} \right)^{t_i} f_{i,j_i} \chi_{X \backslash \dada P'} \right)_{i \in \tau}, \left( f_{i,j_i}\chi_{X \backslash \dada P'} \right)_{i \in \tau^c}\right) ;P' \right\} \right)\right\|_{l^z}\\
  &\quad \times \left\langle\left(\prod_{i \in \tau}  \left|b_i(x) - b_{i,\dada P}\right|^{k_i - t_i}\right)g \right\rangle_{s',P'}\mu(P')\\
  &\quad \quad + 2^{\sum\limits_{i \in \tau} k_i}\sum\limits_{\mathbf{t}_\tau} \int_{P^{\prime}}\left\| \boldsymbol{\psi}_{{\bf k},\tau}(y)\right\|_{L_y^\infty({l^z(Y)})}\prod_{i \in \tau}  \left|b_i(x) - b_{i,\dada P}\right|^{k_i - t_i} \mid g(x) \mid d\mu(x)\\
  &\leq   2^{\sum\limits_{i \in \tau} k_i +2} \cdot c_1 \cdot \Phi_{\M^{\#}_{\mathcal{T}_{\eta},t',\beta, Y} ,\vec{r},\tilde{r}} \left( \frac{1}{6|\tau|(|{\bf k}|+1)c_1c_2} \right)\\
  &\quad \times \sum\limits_{\mathbf{t}_\tau}\mu(P)^{\eta}  \prod_{i \in \tau}\left\langle\left\|\{f_{i,j_i} (b_i - b_{i,\dada P})^{t_i}\right\}\|_{l^z(Y_i)}\right\rangle_{r_i,\dada P}\prod_{i \in \tau^c}\langle\left\|\{f_{i,j_i}\}\right\|_{l^z(Y_i)}\rangle_{r_i,\dada P} \\
  &\quad \times \left\langle\left(\prod_{i \in \tau}  \left|b_i(x) - b_{i,\dada P}\right|^{k_i - t_i}\right)g \right\rangle_{s',P'}\mu(P')\\
  & + 2  \cdot 2^{\sum\limits_{i \in \tau} k_i+1} \cdot c_1 \cdot c_{{\bf t}} \Phi_{\mathcal{T}_{\eta}, \vec{r},\tilde{r}} \left( \frac{1}{6|\tau|(|{\bf k}|+1)c_1c_2} \right)\sum\limits_{\mathbf{t}_\tau}\mu(P)^{\eta}\\  
  & \times\prod_{i \in \tau}\left\langle\left\|\{f_{i,j_i} (b_i - b_{i,\dada P})^{t_i}\right\}\|_{l^z(Y_i)}\right\rangle_{r_i,\dada P}\prod_{i \in \tau^c}\langle\left\|\{f_{i,j_i}\}\right\|_{l^z(Y_i)}\rangle_{r_i,\dada P} \left\langle\left(\prod_{i \in\tau }  \left|b_i - b_{i,\dada P}\right|^{k_i - t_i}\right)g \right\rangle_{1,P'} \mu(P')
  \\
  &\leq C_2 \sum\limits_{\mathbf{t}_\tau}\mu(P)^{\eta}\\  
  &\quad\quad \times\prod_{i \in \tau}\left\langle\left\|\{f_{i,j_i} (b_i - b_{i,\dada P})^{t_i}\right\}\|_{l^z(Y_i)}\right\rangle_{r_i,\dada P}\prod_{i \in \tau^c}\langle\left\|\{f_{i,j_i}\}\right\|_{l^z(Y_i)}\rangle_{r_i,\dada P} \left\langle\left(\prod_{i \in \tau}  \left|b_i - b_{i,\dada P}\right|^{k_i - t_i}\right)g \right\rangle_{s',P'}\mu(P'), 
\end{align*}
where 
\begin{align*}
  C_2 =  2^{\sum\limits_{i \in \tau} k_i+1}\cdot c_1 \cdot \tilde{c}_{{\bf t}}  \left(\Phi_{{\mathcal{T}_{\eta}}, \vec{r},\tilde{r}} \left( \frac{1}{6|\tau|(|{\bf k}|+1)c_1c_2} \right) +  \Phi_{\M^{\#}_{\mathcal{T}_{\eta,s,\dada,Y}}, \vec{r},\tilde{r}} \left( \frac{1}{6|\tau|(|{\bf k}|+1)c_1c_2} \right)\right).
\end{align*}

By Hölder's inequality for sum and disjointness of $\mathcal{F}_{j+1}$, it follows that for any $q \in[1, \infty)$,
\begin{align*}
\sum_{P^{\prime} \in \mathcal{F}_{j+1}: P^{\prime} \subseteq P}\langle | h| \rangle_{q, P^{\prime}}\mu(P^{\prime}) \leq\langle | h| \rangle_{q, P}\mu(P).
\end{align*}
Therefore,
\begin{align*}
  &\sum_{P^{\prime} \in \mathcal{F}_{j+1}: P^{\prime} \subseteq P} \int_{P^{\prime}}\left\|\left\{\mathcal{T}_{\eta,\tau_{}}^{{\bf b, k}}\left(\vec \vf \chi_{\dada P  \backslash \dada P'}\right)\right\}\right\|_{l^z(Y)}|g|\\ 
  &\quad \quad \leq C_2 \sum\limits_{\mathbf{t}_\tau} \mu(P)^{\eta + 1}\prod_{i \in \tau}\left\langle\left\|\{f_{i,j_i} (b_i - b_{i,\dada P})^{t_i}\right\}\|_{l^z(Y_i)}\right\rangle_{r_i,\dada P}\\
  &\quad\quad\quad \times\left\langle\left(\prod_{i \in \tau}  \left|b_i(x) - b_{i,\dada P}\right|^{k_i - t_i}\right)g \right\rangle_{s',\dada P}  \prod_{i \in \tau^c}\langle\left\|\{f_{i,j_i}\}\right\|_{l^z(Y_i)}\rangle_{r_i,\dada P}.
\end{align*}
which will prove \eqref{eq3.6} via combining \eqref{eq3.7}. 
Then, \eqref{m+1.spa_0} can now be derived directly from \eqref{eq3.3} by using a covering argument: Lemma \ref{lem.covering}. 

In fact, from Proposition \ref{cubeeq} and Lemma \ref{lem.covering}, for any $P \in \mathcal{S}$ with center $z$ and sidelength $\delta^k$, we can find a $P^{\prime} \in \d^j$ for some $1 \leq j \leq N$ such that
\begin{align*}
\beta P=B\left(z, \beta C_0 \cdot \delta^k\right) \subseteq P^{\prime}, \quad \operatorname{diam}\left(P^{\prime}\right) \leq \gamma \beta C_0 \cdot \delta^k.
\end{align*}
Therefore there is a $c_1>0$ depending on $S$ and $\beta$ such that
\begin{align*}
\mu\left(P^{\prime}\right) \leq \mu\left(B\left(z, \gamma \beta C_0 \cdot \delta^k\right)\right) \leq c_1 \mu\left(B\left(z, c_0 \cdot \delta^k\right)\right) \leq c_1 \mu(P) .
\end{align*}
By defining $E_{P^{\prime}} := E_P$, we conclude that the collection of cubes $\mathcal{\tilde S} := \{P^{\prime} : P \in \mathcal{S}\}$ is $\frac{1}{2c_1}$-sparse. Since $\beta P \subseteq P^{\prime}$ and $\mu(P^{\prime}) \leq c_1 \mu(\beta P)$ for any $P \in \mathcal{S}$, it follows that
\begin{align*}
    \langle\|\{f_{i,j_i}\}\|_{l^z(Y_i)}\rangle_{r_i,\dada P} & \leq \langle\|\{f_{i,j_i}\}\|_{l^z(Y_i)}\rangle_{r_j, P'},\quad i \in \tau^c,\\
    \left\langle\left\|\left\{f_{i,j_i} (b_i - b_{i,\dada P})^{t_i}\right\}\right\|_{l^z(Y_i)}\right\rangle_{r_i,\dada P} & \leq \left\langle\left\|\left\{f_{i,j_i} (b_i - b_{i,\dada P})^{t_i}\right\}\right\|_{l^z(Y_i)}\right\rangle_{r_i,P'}, \quad i \in \tau, \\
     \left\langle\left(\prod_{i \in \tau}  \left|b_i(x) - b_{i,\dada P}\right|^{k_i - t_i}\right)g \right\rangle_{s',\dada P} & \leq \left\langle\left(\prod_{i \in \tau}  \left|b_i(x) - b_{i,\dada P}\right|^{k_i - t_i}\right)g \right\rangle_{s',P'}.\\
\end{align*}
Using \eqref{eq3.3} with above estimates, this proves the sparse domination in the conclusion of Theorem \ref{SDP}.\qedhere
\end{proof}

\section{\bf Multilinear fractional $\vec{r}$-type maximal operators}\label{Maximal.control.}


The $m$-linear fractional $\vec{r}$-type maximal operator $\M_{\eta, \vec{r}}$ is the main topic of this section. It not only extends the m--sublinear Hardy--
Littlewood maximal operator $M_{\vec{r}}$ define in \cite[Definition 2.4]{Nier2019},   but also provides an important characterization of the weight constant $[\vec{\omega}]_{(\vec{p},q),(\vec{r}, s)}$ for the sparse form, making it more convenient for us to use the sparse forms as tools for the main theorems.

\begin{definition}
Let $m \in \N$, $\eta \in [0,m)$, \( r_1, \ldots, r_m \in (0, \infty) \), we define the $m$-linear fractional $\vec{r}$-type maximal operator as
$$
\M_{\eta, \vec{r}}\left(f_1, \ldots, f_m\right)(x) := \sup_{B \subseteq X} \mu(B)^{\eta} \prod_{j=1}^m \left\langle f_j \right\rangle_{r_j, B} \chi_B(x),
$$
for \( f_j \in L_{\text{loc}}^{r_j} \), where the supremum is taken over all balls \( B \subseteq X \). 
Moreover, for a dyadic grid $\d$, the dyadic $m$-linear fractional $\vec{r}$-type maximal operator is defined by
$$
\M_{\eta, \vec{r}}^{\d}\left(f_1, \ldots, f_m\right)(x) := \sup_{\substack{Q \in \d}} \mu(Q)^{\eta} \prod_{j=1}^m \left\langle f_j \right\rangle_{r_j, Q} \chi_Q(x),
$$
for \( f_j \in L_{\text{loc}}^{r_j} \).
\end{definition}
\begin{definition}\label{def:weight}
Let $r_1, \ldots, r_m \in(0, \infty), s \in(0, \infty]$, and $p_1, \ldots, p_m \in(0, \infty]$ with $(\vec{r}, s) \preceq (\vec{p},q)$. Let $\omega_1, \ldots, \omega_m$ be weights and write $\omega=\prod\limits_{j=1}^m \omega_j$, $\vec{\omega}=\left(\omega_1, \ldots, \omega_m\right)$. We say that $\vec{\omega} \in A_{(\vec{p},q),(\vec{r}, s)}$ if
$$
[\vec{\omega}]_{(\vec{p},q),(\vec{r}, s)}:=\sup _{Q \in \d}\left(\prod_{j=1}^m\left\langle \omega_j^{-1}\right\rangle_{{\frac{1}{\frac{1}{r_j}-\frac{1}{p_j}}},Q}\right)\langle \omega\rangle_{\frac{1}{\frac{1}{q}-\frac{1}{s}}, Q}<\infty,
$$
\end{definition}

\begin{remark}\label{transform.}
 Set $r_{m+1} = s'$, $p_{m+1} = q'$, and $\omega_{m+1}=\omega^{-1}$, the constant for the weight class now takes the form
\begin{align*}
[\vec{\omega}]_{(\vec{p},q),(\vec{r}, s)}=\sup _Q \prod_{j=1}^{m+1}\left\langle \omega_j^{-1}\right\rangle_{{\frac{1}{\frac{1}{r_j}-\frac{1}{p_j}}},Q} &= \left[\left(\omega_1, \ldots, \omega_{m+1}\right)\right]_{\left(\left(p_1, \ldots, p_{m+1}\right),q\right),\left(\left(r_1, \ldots, r_{m+1}\right), \infty\right)}\\
    &:=[\boldsymbol{\omega}]_{(\boldsymbol{p},q),(r, \infty)}<\infty.\\
\end{align*}
The definition of $\boldsymbol{\omega}$ has already been explained in \eqref{cuti}.
\end{remark}
The following remark redefines the weight class. It can be obtained by imitating  \cite[Lemma 2.10 , 2.11]{Nier2019}.
\begin{remark}
Let $m \in \N$, $\eta \in [0,m)$, $\frac{1}{q}=\sum\limits_{i=1}^{m}  \frac{1}{p_i} - \mici$, $p_{m+1}=q'$, and $\omega_{m+1}=\left(\prod\limits_{j=1}^{m} \omega_j\right)^{-1}$.  Under the assumption of Definition \ref{def:weight}, then we have
$\sum\limits_{j=1}^{m+1} \frac{1}{p_j} =1+ \mici$ and $\prod\limits_{j=1}^{m+1} \omega_j=1$, moreover
\begin{itemize}
   \item Let $q=1$, then $\boldsymbol{\omega} \in A_{(\boldsymbol{p},q),(\boldsymbol{r}, \infty)}$ if and only if $v_1, \ldots, v_{m+1}$ are locally integrable and there is a constant $c>0$ such that for all cubes $Q$ we have
   \begin{align}\label{def.of.weighted.condi.}
\left(\prod_{j=1}^{m+1}\left\langle v_j\right\rangle_{1, Q}^{\frac{1}{r_j}}\right)\mu(Q)^{1 + \eta} \leq [\boldsymbol{\omega}]_{(\boldsymbol{p},q),(\boldsymbol{r}, \infty)} \prod_{j=1}^{m+1} v_j(Q)^{\frac{1}{p_j}}.
   \end{align}
   \item Let $Q$ be a cube and let $E \subseteq Q$ such that $\mu(Q) \leq \delta\mu(E)$ with $\delta \in (0,1)$. Then, we have
\begin{align}\label{def.of.weighted.condi._2}
\left(\prod_{j=1}^{m+1}\left\langle v_j\right\rangle_{1, Q}^{\frac{1}{r_j}}\right)\mu(Q)^{1 + \eta} \lesssim [\boldsymbol{\omega}]^{\varTheta}_{(\boldsymbol{p},q),(\boldsymbol{r}, \infty)} \prod_{j=1}^{m+1} v_j(E)^{\frac{1}{p_j}},
   \end{align}
   where $\varTheta = \max\limits_{j=1, \ldots, m+1} \left\{ \frac{\frac{1}{r_j}}{\frac{1}{r_j} - \frac{1}{p_j}} \right\} =\max \left\{ {\frac{{{p_1}}}{{{p_1} - {r_1}}}, \cdots ,\frac{{{p_m}}}{{{p_m} - {r_m}}},\frac{{q'}}{{q' - s'}}} \right\}$.
\end{itemize}
\end{remark}

\begin{figure}[!h]
	\begin{center}
		\begin{tikzpicture}
		\node (1) at(0,0) {$A^{}_{(\vec{p},q),(\vec{r}, s)}$};
		\node (2) at(2.5,2.5) {$A_{\vec p, q}$};
      \node (3) at(5,2.5) {$A_{p, q}$};
		\node (4) at(2.5,-2.5) {$A^{}_{\vec{p},(\vec{r}, s)}$};
		\node (5) at(5,-2.5) {$A_{\vec{p}}$};
		\node (6) at(7.5,0) {$A_{p}$};
		\draw[->] (1)--(2) node[midway, above] {\eqref{we_2}};
      \draw[->] (1)--(3) node[midway, above] {\eqref{we_5}};
      \draw[->] (1)--(5) node[midway, above] {\eqref{we_3}};
      \draw[->] (1)--(6) node[midway, above] {\eqref{we_6}};
		\draw[->] (1)--(4) node[midway, above] {\eqref{we_1}};
		\draw[->] (2)--(3);
		\draw[->] (2)--(5);
		\draw[->] (4)--(5);
      \draw[->] (4)--(6);
		\draw[->] (3)--(6);
		\draw[->] (5)--(6);
      \draw[->] (2)--(6);
		\end{tikzpicture}
	\end{center}
	\caption{The relationships between weights}\label{figure1}
\end{figure}   
\begin{remark}
   The definition of $\vec{\omega} \in A^{}_{(\vec{p},q),(\vec{r}, s)}$ condition is a very broad extension. $\vec{\omega} \in A^{}_{(\vec{p},q),(\vec{r}, s)}$ can be transformed into some well-known and fully studied classical weight conditions, in which case we give Fig.\ref{figure1} to clarify the relationships of these weights.

If $X=\mathbb{R}^n$, these weights are also back to the following classical weights.

\begin{list}{\rm (\theenumi)}{\usecounter{enumi}\leftmargin=1.2cm \labelwidth=1cm \itemsep=0.2cm \topsep=.2cm \renewcommand{\theenumi}{\arabic{enumi}}}
\item \label{we_1} If $p=q$, then $A^{}_{(\vec{p},q),(\vec{r}, s)}(X) = A^{}_{\vec{p},(\vec{r}, s)}(\rn)$ introduced in \cite{Nier2019}.
\item \label{we_2} If $\vec r = \{r_i\}_{i=1}^m= \{1\}_{i=1}^m$ and $s = \infty$, then
$A^{}_{(\vec{p},q),(\vec{r}, s)}(X) = A^{}_{\vec{p},q}(\rn)$ introduced in \cite{Perez2009}.
\item  \label{we_3} If $\vec r = \{r_i\}_{i=1}^m= \{1\}_{i=1}^m$, $s = \infty$, and $p=q$, then
$A^{}_{(\vec{p},q),(\vec{r}, s)} (X)= A^{}_{\vec{p}}(\rn)$ introduced in \cite{Perez2009}.
\item \label{we_5} If $\vec p = \{p_i\}_{i=1}^m= \{p\}_{i=1}^m$, $\vec r = \{r_i\}_{i=1}^m= \{1\}_{i=1}^m$ and $s = \infty$, then
$A^{}_{(\vec{p},q),(\vec{r}, s)}(X) = A^{}_{{p},q}(\rn)$ introduced in \cite{Muc1974}.
\item \label{we_6} If $\vec p = \{p_i\}_{i=1}^m= \{p\}_{i=1}^m$, $\vec r = \{r_i\}_{i=1}^m= \{1\}_{i=1}^m$, $s = \infty$, and $p=q$, then
$A^{}_{(\vec{p},q),(\vec{r}, s)}(\rn) = A^{}_{{p}}(\rn)$ introduced in \cite{Muc1972}.
\end{list}

\end{remark}

\begin{definition}
For $0 < \delta < 1$, a collection $\mathcal{S} \subseteq \mathcal{D}$ of dyadic cubes is \textit{$\delta$-sparse} if for each $Q \in \mathcal{S}$, there exists a measurable subset $E_Q \subseteq Q$ with $\mu(E_Q) \geq \delta \mu(Q)$, and the family $\{E_Q\}_{Q \in \mathcal{S}}$ are pairwise disjoint.

Let $m \in \N$, $\eta \in [0,m)$, $r_1, \ldots, r_m \in(0, \infty)$, for a sparse collection of cubes $\mathcal{S}$ we define the $m$-linear fractional sparse operator by
$$
\mathscr{A}_{\eta,\vec{r}, \mathcal{S}}\left(f_1, \ldots, f_m\right):=\sum_{Q \in \mathcal{S}}\mu(Q)^{\eta}\left(\prod_{j=1}^m\left\langle f_j\right\rangle_{r_j, Q}\right) \chi_Q
$$
and define the $m+1$-linear fractional sparse form  as 
$$
\mathcal{A}_{\eta,\vec{r}, \mathcal{S}}\left(f_1, \ldots, f_{m+1}\right):=\sum_{Q \in \mathcal{S}} \mu(Q)^{\eta+1} \prod_{j=1}^{m+1}\left\langle f_j\right\rangle_{r_j, Q}.
$$
\end{definition}

The following characterization is the main theorem of this section.
\begin{theorem}\label{Maximal_1}
Let $m \in \N$, $\eta \in [0,m)$, 
   $p_1, \ldots, p_{m+1} \in(0, \infty]$ with $\sum\limits_{j=1}^{m+1} \frac{1}{p_j}=1 + \eta$, $r_1, \ldots, r_{m+1} \in(0, \infty)$, satisfying $\boldsymbol{r} < \boldsymbol{p}$.
   Moreover, let $\omega_1, \ldots, \omega_{m+1}$ be weights satisfying $\prod\limits_{j=1}^{m+1} \omega_j=1$. Then the following are equivalent:
\begin{list}{\rm (\theenumi)}{\usecounter{enumi}\leftmargin=1cm \labelwidth=1cm \itemsep=0.2cm \topsep=.2cm \renewcommand{\theenumi}{\roman{enumi}}}
   \item $\boldsymbol{\omega} \in A_{(\boldsymbol{p},q),(\boldsymbol{r}, \infty)}$;
   \item $\left\|\M_{\eta,\boldsymbol{r}}\right\|_{L^{p_1}\left(\omega_1^{p_1}\right) \times \cdots \times L^{p_{m+1}}\left(\omega_{m+1}^{p_{m+1}}\right) \rightarrow L^{\frac{1}{1+\eta},\infty}}<\infty$;
   \item $\left\|\M_{\eta,\boldsymbol{r}}\right\|_{L^{p_1}\left(\omega_1^{p_1}\right) \times \cdots \times L^{p_{m+1}}\left(\omega_{m+1}^{p_{m+1}}\right) \rightarrow L^{\frac{1}{1+\eta}}}<\infty$;
   \item  $\sup\limits_{\mathcal{S} \subseteq \d}\left\| \mathcal{A}_{\eta,\boldsymbol{r}, \mathcal{S}}\right\|_{L^{p_1}\left(\omega_1^{p_1}\right) \times \cdots \times L^{p_{m+1}}\left(\omega_{m+1}^{p_{m+1}}\right) \rightarrow \mathbb{R}}<\infty$.
\end{list}
Moreover, we have
\begin{align}\label{Nier2.6}
   &\left\|\M_{\eta,\boldsymbol{r}}\right\|_{L^{p_1}\left(\omega_1^{p_1}\right) \times \cdots \times L^{p_{m+1}}\left(\omega_{m+1}^{p_{m+1}}\right) \rightarrow L^{\frac{1}{1+\eta},\infty}}   \approx [\boldsymbol{\omega}]_{(\boldsymbol{p},q),(\boldsymbol{r}, \infty)},\\ \label{Nier2.7}
   &   \left\|\M_{\eta,\boldsymbol{r}}\right\|_{L^{p_1}\left(\omega_1^{p_1}\right) \times \cdots \times L^{p_{m+1}}\left(\omega_{m+1}^{p_{m+1}}\right) \rightarrow L^{\frac{1}{1 + \eta}}}\approx \sup\limits_{\mathcal{S} \subseteq \d} \left\|\mathcal{A}_{\eta,\boldsymbol{r}, \mathcal{S}}\right\|^{1+\eta}_{L^{p_1}\left(\omega_1^{p_1}\right) \times \cdots \times L^{p_{m+1}}\left(\omega_{m+1}^{p_{m+1}}\right) \rightarrow \mathbb{R}},
\end{align}
where the implicit constants depend only on the $X$, and
\begin{align}\label{Nier2.8}
   \sup\limits_{\mathcal{S} \subseteq \d} \left\|\mathcal{A}_{\eta,\boldsymbol{r}, \mathcal{S}}\right\|_{L^{p_1}\left(\omega_1^{p_1}\right) \times \cdots \times L^{p_{m+1}}\left(\omega_{m+1}^{p_{m+1}}\right) \rightarrow \mathbb{R}} \lesssim_X c^*[\boldsymbol{\omega}]^{{\gamma}}_{(\boldsymbol{p},q),(\boldsymbol{r}, \infty)},
\end{align}
where the implicit constant depends on $X$ and
$$
c^*=\left(\prod_{i=1}^{m}\left(\frac{p_i}{p_i-r_i}\right)^{\frac{1}{r_i}}\right) \cdot \left(\frac{{q'}}{{q' - s'}}\right)^{\frac{1}{s'}}.
$$
In fact, the exponent $\varTheta:=\max \left\{ {\frac{{{p_1}}}{{{p_1} - {r_1}}}, \cdots ,\frac{{{p_m}}}{{{p_m} - {r_m}}},\frac{{q'}}{{q' - s'}}} \right\}$ is sharp.
\end{theorem}

Similar to \cite[Lemma 2.21]{Cen2408}, we can prove the following conclusion.
\begin{lemma}\label{dyadic.corl.}
   Let $m \in \N$, $\eta \in [0,m)$, $r_1, \ldots, r_m \in(0, \infty)$, there exists a finite family $\left\{\d^i\right\}_{i=1}^N$ of dyadic grids such that  
\begin{align*}
   \M_{\eta,\vec{r}}(\vec{f}) \approx \sum_{i=1}^{N} \M_{\eta,\vec{r}}^{\d^i}(\vec{f}).
\end{align*}
\end{lemma}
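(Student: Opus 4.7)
The plan is to prove the two-sided comparison by treating each inequality separately, using Proposition \ref{cubeeq} to pass from balls to adjacent dyadic cubes for the nontrivial direction, and using the containment \eqref{eq:contain} for the routine direction.

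For the easy inequality $\sum_{i=1}^{N} \mathscr{M}_{\alpha(\eta),\vec{r}}^{\mathcal{D}^i}(\vec{f}) \lesssim \mathscr{M}_{\alpha(\eta),\vec{r}}(\vec{f})$, I would fix any dyadic grid $\mathcal{D}^i$ and any cube $Q_j^k \in \mathcal{D}^i$. By \eqref{eq:contain} we have $Q_j^k \subseteq B(z_j^k, C_0\delta^k) =: B$, and also $B(z_j^k, c_0 \delta^k) \subseteq Q_j^k$; the doubling property \eqref{def_hom} then yields $\mu(B) \lesssim \mu(Q_j^k)$ with a constant depending only on $C_\mu$, $C_0/c_0$. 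Since all $r_i$-averages over $Q_j^k$ are dominated by the corresponding averages over $B$ up to $(\mu(B)/\mu(Q_j^k))^{1/r_i}$, and since $\alpha(\eta)$ is a fixed nonnegative number for fixed $\eta$, I get
\[
\mu(Q_j^k)^{\alpha(\eta)} \prod_{i=1}^m \langle f_i\rangle_{r_i,Q_j^k} \chi_{Q_j^k}(x) \lesssim \mu(B)^{\alpha(\eta)} \prod_{i=1}^m \langle f_i\rangle_{r_i,B} \chi_B(x) \leq \mathscr{M}_{\alpha(\eta),\vec{r}}(\vec{f})(x).
\]
Taking the supremum over $Q_j^k$ and summing over the finitely many grids $i = 1,\ldots,N$ closes this direction.

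For the reverse inequality $\mathscr{M}_{\alpha(\eta),\vec{r}}(\vec{f}) \lesssim \sum_{i=1}^{N} \mathscr{M}_{\alpha(\eta),\vec{r}}^{\mathcal{D}^i}(\vec{f})$, the key input is Proposition \ref{cubeeq}. Given any ball $B = B(x,\rho)$ with $x \in X$ and $\rho > 0$, that proposition supplies an index $j(B) \in \{1,\ldots,N\}$ and a cube $Q_B \in \mathcal{D}^{j(B)}$ such that $B \subseteq Q_B$ and $\operatorname{diam}(Q_B) \leq \gamma \rho$. The containment $B \subseteq Q_B \subseteq B(x, \gamma\rho)$ and repeated application of the doubling condition give $\mu(Q_B) \lesssim \mu(B(x,\gamma\rho)) \lesssim \mu(B)$, with implicit constant depending only on $\gamma$, $C_0$ and $C_\mu$. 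Consequently, for every $y \in B$ (hence $y \in Q_B$),
\[
\mu(B)^{\alpha(\eta)} \prod_{i=1}^{m} \langle f_i\rangle_{r_i,B} \chi_B(y) \;\lesssim\; \mu(Q_B)^{\alpha(\eta)} \prod_{i=1}^{m} \langle f_i\rangle_{r_i,Q_B} \chi_{Q_B}(y) \;\leq\; \mathscr{M}_{\alpha(\eta),\vec{r}}^{\mathcal{D}^{j(B)}}(\vec{f})(y).
\]
Taking the supremum over all balls $B$ containing $y$ and bounding $\mathscr{M}^{\mathcal{D}^{j(B)}}$ by the sum over all $N$ grids yields the claim.

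The argument is essentially routine once Proposition \ref{cubeeq} is in hand; the only point to check with care is that the stochastic parameter $\alpha(\eta)$ behaves well, but since $\eta$ is fixed and $\alpha(\eta) \in [0,\infty)$ is just a scalar, the comparison $\mu(Q_B)^{\alpha(\eta)} \approx \mu(B)^{\alpha(\eta)}$ follows from raising the bounded ratio $\mu(Q_B)/\mu(B)$ to the power $\alpha(\eta)$ (with implicit constant depending on the fixed $\eta$ through $\alpha(\eta)$). If uniformity in $\eta$ were required one would need $\alpha$ to be bounded on the relevant range, but since the statement only asserts the pointwise equivalence for each $\eta$, no such obstruction arises. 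Hence the main step is the invocation of Proposition \ref{cubeeq}, and the rest is doubling-based bookkeeping.
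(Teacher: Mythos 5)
Your proof is correct and follows exactly the route the paper intends (it omits the argument, citing the analogue in \cite{Cen2408}): the easy direction via the sandwich \eqref{eq:contain} plus doubling, and the reverse direction via the adjacent dyadic systems of Proposition \ref{cubeeq} with $\mu(Q_B)\approx\mu(B)$, noting as you do that the implicit constants may depend on the fixed value $\alpha(\eta)$. No gaps to report.
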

\begin{lemma}\label{control.operator.or.form}
Let $m \in \N$, $\eta \in [0,m)$, and $0<r_1, \ldots, r_m<\infty$. Then for each dyadic lattice $\d$ and all $f_j \in L_{loc}^{r_j}$, there is a sparse family $\mathcal{S} \subseteq \d$ such that
 \begin{align*}
\M_{\eta,\vec{r}}^{\d}\left(f_1, \ldots, f_m\right) \approx \sum_{Q \in \mathcal{S}} \mu(Q)^{\eta}\prod_{j=1}^m\left\langle f_j\right\rangle_{r_j, Q} \chi_{E_Q}
 \end{align*}
 pointwise almost everywhere, where $\frac{1}{r}=\sum_{j=1}^m \frac{1}{r_j}$. In particular we have
 \begin{align*}
\M_{\eta,\vec{r}}^{\d}\left(f_1, \ldots, f_m\right) \lesssim A_{\eta, \vec{r}, \mathcal{S}}\left(f_1, \ldots, f_m\right).
 \end{align*}
\end{lemma}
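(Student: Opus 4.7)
The plan is to run the standard principal-cube (stopping-time) construction adapted to the fractional weight $\mu(Q)^{\alpha(\eta)}$. Assuming $f_j \ge 0$ and reducing to the case where the $f_j$ are supported in a bounded set (so that the averages decay at infinity), I would set $a(Q) := \mu(Q)^{\alpha(\eta)}\prod_{j=1}^m \langle f_j\rangle_{r_j,Q}$ and build $\mathcal{S}$ iteratively: starting from suitable top cubes, a dyadic cube $Q' \subsetneq Q \in \mathcal{S}$ is declared a stopping child of $Q$ (and thus placed in $\mathcal{S}$) exactly when $Q'$ is maximal subject to $a(Q') > A\,a(Q)$, for a constant $A>1$ to be fixed at the end. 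Set $E_Q := Q \setminus \bigcup_{Q' \in \mathrm{ch}_\mathcal{S}(Q)} Q'$; by construction the $E_Q$ are pairwise disjoint.

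For the pointwise equivalence I would proceed as follows. For $x$ in the support let $Q^x$ denote the smallest cube of $\mathcal{S}$ containing $x$, so $x \in E_{Q^x}$ and the chain $Q^x = Q_0^x \subsetneq Q_1^x \subsetneq \cdots$ of $\mathcal{S}$-ancestors satisfies $a(Q_{i+1}^x) < A^{-1} a(Q_i^x)$. For any dyadic $R \ni x$ either $R \subseteq Q^x$, in which case $a(R) \le A\,a(Q^x)$ (otherwise maximality of the stopping construction inside $Q^x$ would produce an $\mathcal{S}$-cube strictly between $R$ and $Q^x$, contradicting the minimality of $Q^x$), or $Q_i^x \subseteq R \subsetneq Q_{i+1}^x$ for some $i \ge 0$, and then the maximality of $Q_i^x$ among subcubes of $Q_{i+1}^x$ with $a > A\,a(Q_{i+1}^x)$ forces $a(R) \le A\,a(Q_{i+1}^x) \le a(Q^x)$. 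In both cases $a(R) \le A\,a(Q^x) = A\sum_{Q\in\mathcal{S}} a(Q)\chi_{E_Q}(x)$, giving both the pointwise comparison and, by $\chi_{E_Q}\le\chi_Q$, the stated bound $\M^\d_{\alpha(\eta),\vec r}(\vec f) \lesssim \mathcal{A}_{\alpha(\eta),\vec r,\mathcal{S}}(\vec f)$.

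The main step is the sparsity bound $\mu(E_Q) \ge \tfrac12 \mu(Q)$. Fix $Q \in \mathcal{S}$; because $\mu(Q')^{\alpha(\eta)} \le \mu(Q)^{\alpha(\eta)}$ for $Q' \subseteq Q$, the condition $a(Q') > A\,a(Q)$ forces $\prod_j \langle f_j\rangle_{r_j,Q'} > A \prod_j \langle f_j\rangle_{r_j,Q}$, so the $\alpha(\eta)$-term is harmless. Normalising $h_j := (f_j\chi_Q)^{r_j}/\langle f_j^{r_j}\rangle_Q$ (so $\langle h_j\rangle_Q = 1$) reduces this to $\prod_j \langle h_j\rangle_{Q'}^{1/r_j} > A$, and a pigeonhole argument yields $\langle h_j\rangle_{Q'} > A^{r_j/m}$ for some $j$. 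Consequently
\[
\bigcup_{Q' \in \mathrm{ch}_\mathcal{S}(Q)} Q' \;\subseteq\; \bigcup_{j=1}^{m} \bigl\{x\in Q : M^\d h_j(x) > A^{r_j/m}\bigr\},
\]
and the weak-type $(1,1)$ bound of the dyadic Hardy--Littlewood maximal on $(X,d,\mu)$ (with constant depending only on $X$) gives $\mu\bigl(\bigcup_{Q'} Q'\bigr) \le C_X \mu(Q)\sum_{j=1}^m A^{-r_j/m}$. Choosing $A$ large enough in terms of $m$, $\min_j r_j$, and $C_X$ makes this at most $\tfrac12\mu(Q)$.

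The main obstacle I foresee is precisely that the pigeonhole gain $A^{r_j/m}$ can be weak when $\min_j r_j$ is small, so the usual ``doubling'' threshold $a(Q') > 2\,a(Q)$ is insufficient and one must keep $A$ as a free parameter determined at the end by the sparsity estimate. A minor secondary issue is the passage from an arbitrary dyadic lattice $\d$ (which need not have a single top cube on a space of homogeneous type) to the iterative construction, which is handled by exhausting $X$ by an increasing sequence of dyadic cubes or, as in \cite{Lorist2020}, by running the construction on each connected component of the dyadic tree separately; once the stopping threshold $A$ is fixed, every step fits together into the claimed $\tfrac12$-sparse family.
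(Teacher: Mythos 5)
Your proposal cannot be checked against an in-paper argument, because the paper states Lemma \ref{control.operator.or.form} without proof (it is treated as a standard fact in the spirit of Nieraeth's $M_{\vec r}$--sparse equivalence and the pointwise sparse domination of dyadic maximal operators). What you supply is exactly the canonical principal-cube/stopping-time proof, and it is essentially correct: the stopping condition $a(Q')>A\,a(Q)$ with $a(Q)=\mu(Q)^{\alpha(\eta)}\prod_j\langle f_j\rangle_{r_j,Q}$, the observation that $\mu(Q')^{\alpha(\eta)}\le\mu(Q)^{\alpha(\eta)}$ lets you drop the fractional factor, the pigeonhole to a single $M^{\mathcal{D}}h_j$ with threshold $A^{r_j/m}$, the weak $(1,1)$ bound to get $\mu\bigl(\bigcup_{Q'}Q'\bigr)\le\tfrac12\mu(Q)$ for $A$ large, and the two-case comparison $a(R)\le A\,a(Q^x)$ are all sound, and your remark that $A$ must be left free until the sparsity step (rather than the naive threshold $2$) is the right technical point when $\min_j r_j$ is small.

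Two details deserve to be stated more carefully. First, your reduction "bounded support, so the averages decay at infinity" is not quite enough in the fractional setting: $\langle f_j\rangle_{r_j,R}$ decays, but $a(R)$ carries the growing factor $\mu(R)^{\alpha(\eta)}$, so $a(R)\to 0$ along increasing $R$ only when $\alpha(\eta)<\sum_{j=1}^m\frac{1}{r_j}$. This restriction is not in the lemma's literal hypotheses, but it is implicitly necessary for $\M_{\alpha(\eta),\vec r}^{\mathcal{D}}(\vec f)$ to be finite and it does hold in the paper's applications (e.g.\ in Theorem \ref{Maximal_1}, where $r_j<p_j$ and $\sum_j \frac{1}{p_j}=1+\alpha(\eta)$), so you should make it explicit rather than bury it in the bounded-support reduction. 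Second, your case analysis for $R\ni x$ omits the case in which $R$ strictly contains every $\mathcal{S}$-cube containing $x$; this is precisely where the choice of top cubes (or the alternative construction via maximal cubes in the level sets $\{a(Q)>A^k\}$, $k\in\Z$, which needs no top cubes) must be invoked, and under the decay condition above it is routine. With these two clarifications the argument is complete and matches what the paper implicitly relies on.
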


\begin{proof}[Proof of Theorem~\ref{Maximal_1}]
   We set up a transformation $v_j:=\omega_j^{-\frac{1}{\frac{1}{r_j}-\frac{1}{p_j}}}$ for $j \in \tau_{m+1}$ to make the proof more straightforward.

The proof will proceed in these steps: First, we prove equation \eqref{Nier2.6} to establish the equivalence between (i) and (ii). Then, we prove equation \eqref{Nier2.7} to show the equivalence of (iii) and (iv). Finally, since (iii) $\Rightarrow$ (ii) is clear, we complete the proof by demonstrating (i) $\Rightarrow$ (iv) using equation \eqref{Nier2.8}.

{\bf Proof of \eqref{Nier2.6}.}

To prove inequality \eqref{Nier2.6}, we first observe, by Lemma \ref{dyadic.corl.}, that it suffices to estimate \( \M_{\eta, \boldsymbol{r}}^{\d} \) for a dyadic lattice \( \d \).

Fix a cube \( Q \) and assume the \( v_j \) are locally integrable. Let \( 0 < \lambda < \mu(Q)^{1 + \eta} \prod_{j=1}^m \left\langle v_j \right\rangle_{1, Q}^{1/r_j} \). Define \( f_j := v_j^{1/r_j} \chi_Q \), which gives
\[
\M_{\eta,\boldsymbol{r}}\left(f_1, \ldots, f_{m+1}\right)(x) \geq \mu(Q)^{1 + \eta}\prod\limits_{j=1}^{m+1}\left\langle f_j\right\rangle_{r_j, Q} = \mu(Q)^{1 + \eta}\prod_{j=1}^{m+1}\left\langle v_j\right\rangle_{1, Q}^{\frac{1}{r_j}} > \lambda
\]
for all \( x \in Q \), implying \( Q \subseteq \left\{\M_{\eta,\boldsymbol{r}}\left(f_1, \ldots, f_{m+1}\right) > \lambda\right\} \). Thus,
\begin{align*}
   \lambda\mu(Q)^{1 + \eta} & \leq \lambda\mu\left(\left\{\M_{{\eta,\boldsymbol{r}}}\left(f_1, \ldots, f_{m+1}\right) > \lambda\right\}\right)^{1 + \eta} \\
   & \leq \left\|\M_{\eta,\boldsymbol{r}}\right\|_{L^{p_1}\left(\omega_1^{p_1}\right) \times \cdots \times L^{p_{m+1}}\left(\omega_{m+1}^{p_{m+1}}\right) \rightarrow L^{\frac{1}{1 + \eta}, \infty}} \prod_{j=1}^{m+1}\left\|f_j\right\|_{L^{p_j}\left(\omega_j^{p_j}\right)} \\
   & = \left\|\M_{\eta,\boldsymbol{r}}\right\|_{L^{p_1}\left(\omega_1^{p_1}\right) \times \cdots \times L^{p_{m+1}}\left(\omega_{m+1}^{p_{m+1}}\right) \rightarrow L^{\frac{1}{1 + \eta}, \infty}} \prod_{j=1}^{m+1} v_j(Q)^{\frac{1}{p_j}}.
\end{align*}
Taking the supremum over \( \lambda \), we obtain
\begin{align}\label{Nier2.14}
   \left(\prod_{j=1}^{m+1}\left\langle v_j\right\rangle_{1, Q}^{\frac{1}{r_j}}\right)\mu(Q)^{1 + \eta} \leq \left\|\M_{\eta,\boldsymbol{r}}\right\|_{L^{p_1}\left(\omega_1^{p_1}\right) \times \cdots \times L^{p_{m+1}}\left(\omega_{m+1}^{p_{m+1}}\right) \rightarrow L^{\frac{1}{1 + \eta}, \infty}} \prod_{j=1}^{m+1} v_j(Q)^{\frac{1}{p_j}}.
\end{align}
From \eqref{def.of.weighted.condi.}, this implies
\[
[\boldsymbol{\omega}]_{_{(\boldsymbol{p},q),(\boldsymbol{r}, \infty)}} \leq \left\|\M_{\eta,\boldsymbol{r}}\right\|_{L^{p_1}\left(\omega_1^{p_1}\right) \times \cdots \times L^{p_{m+1}}\left(\omega_{m+1}^{p_{m+1}}\right) \rightarrow L^{\frac{1}{1 + \eta}, \infty}}.
\]

Let \( \mathcal{F} \subseteq \d \) be a finite collection. For \( \lambda > 0 \) and \( f_j \in L^{p_j}(\omega_j^{p_j}) \), define \( \M_{\eta, \boldsymbol{r}}^{\mathcal{F}} \) as \( \M_{\eta, \boldsymbol{r}}^{\d} \), but with the supremum over \( Q \in \mathcal{F} \). Define
\[
\Omega_\lambda^{\mathcal{F}} := \left\{ \M_{\eta, \boldsymbol{r}}^{\mathcal{F}}(f_1, \ldots, f_{m+1}) > \lambda \right\},
\]
and similarly for \( \Omega_\lambda^{\d} \). Let \( \mathcal{P} \) be the collection of cubes \( Q \in \mathcal{F} \) satisfying
\[
\mu(Q)^{\eta} \prod_{j=1}^{m+1} \left\langle f_j \right\rangle_{r_j, Q} > \lambda
\]
with no dyadic ancestors in \( \mathcal{F} \). Using the identity
\[
\langle h \rangle_{r, Q} = \left\langle h u^{-\frac{1}{r}} \right\rangle_{r, Q}^u \langle u \rangle_{1, Q}^{\frac{1}{r}},
\]
and \eqref{def.of.weighted.condi.}, \( \mathcal{P} \) decomposes \( \Omega_\lambda^{\mathcal{F}} \). We compute:
\begin{align*}
\lambda\mu(\Omega_\lambda^{\mathcal{F}}) &=\sum_{Q \in \mathcal{P}} \lambda\mu(Q) \leq \sum_{Q \in \mathcal{P}}\left(\prod_{j=1}^{m+1}\left\langle f_j\right\rangle_{r_j, Q}\right)\mu(Q)^{1 + \eta} \\
   &=\sum_{Q \in \mathcal{P}}\left(\prod_{j=1}^{m+1}\left\langle f_j v_j^{-\frac{1}{r_j}}\right\rangle_{r_j, Q}^{v_j}\left\langle v_j\right\rangle_{1, Q}^{\frac{1}{r_j}}\right)\mu(Q)^{1 + \eta} \\
   &\leq[\boldsymbol{\omega}]_{{(\boldsymbol{p},q),(\boldsymbol{r}, \infty)}} \sum_{Q \in \mathcal{P}} \prod_{j=1}^{m+1}\left\langle f_j v_j^{-\frac{1}{r_j}}\right\rangle_{r_j, Q}^{v_j} v_j(Q)^{\frac{1}{p_j}} \\
   &\leq[\boldsymbol{\omega}]_{{(\boldsymbol{p},q),(\boldsymbol{r}, \infty)}} \sum_{Q \in \mathcal{P}} \prod_{j=1}^{m+1}\left(\int_Q\left|f_j\right|^{p_j} v_j^{p_j\left(\frac{1}{p_j}-\frac{1}{r_j}\right)} \mathrm{d} x\right)^{\frac{1}{p_j}} \\
   &\leq[\boldsymbol{\omega}]_{_{(\boldsymbol{p},q),(\boldsymbol{r}, \infty)}} \prod_{j=1}^{m+1}\left\|f_j\right\|_{L^{p_j}\left(\omega_j^{p_j}\right)}.
\end{align*}
Here, Hölder's inequality (with \( r_j \leq p_j \)) is applied in the fourth step, and again to the sum in the final step. Covering \( \d \) with finite collections and taking the supremum over \( \lambda > 0 \), we conclude
\[
\left\|\M_{\eta,\boldsymbol{r}}^{\d}\right\|_{L^{p_1}\left(w_1^{p_1}\right) \times \cdots \times L^{p_{m+1}}\left(w_{m+1}^{p_{m+1}}\right) \rightarrow L^{\frac{1}{1 + \eta}, \infty}} \leq  [\boldsymbol{\omega}]_{_{(\boldsymbol{p},q),(\boldsymbol{r}, \infty)}}.
\]

Replace \( v_j \) with \( \left(v_j^{-1} + \varepsilon\right)^{-1} \) for \( \varepsilon > 0 \), which are bounded and locally integrable. Rearrange \eqref{Nier2.14} and apply the Monotone Convergence Theorem as \( \varepsilon \to 0 \). This yields the converse inequality, completing the proof of \eqref{Nier2.6}.

{\bf Proof of \eqref{Nier2.7}.}

For \eqref{Nier2.7}, let $f_j \in L^{p_j}\left(\omega_j^{p_j}\right)$ and let $\d$ be a dyadic grid. We claim that
\begin{align}\label{121_4}
   \left\|\M^{\d}_{\eta,\boldsymbol{r}}(f_1,\ldots,f_{m+1})\right\|_{L^{\frac{1}{1+\eta}}} \leq \left(\mathcal{A}_{\frac{\eta}{1+\eta},\boldsymbol{r}(1+\eta),\mathcal{S}}(g_1,\ldots,g_{m+1})\right)^{1+\eta},
\end{align}
where $g_j=f_j^{\frac{1}{1+\eta}}$. Indeed, notice that
\begin{align*}
   \left\|\M^{\d}_{\eta,\boldsymbol{r}}(f_1,\ldots,f_{m+1})\right\|_{L^{\frac{1}{1+\eta}}} 
   &= \left(\int_X \sup\limits_{Q \in \d} \mu(Q)^{\frac{\eta}{1 + \eta}}\prod_{j=1}^{m}\langle f_j^{\frac{1}{1+\eta}} \rangle_{r_j(1 + \eta),Q}\chi_Q\right)^{1 + \eta}\\
   &=\left\|\M^{\d}_{\frac{\eta}{1+\eta},\boldsymbol{r}(1+\eta)}(g_1,\ldots,g_{m+1})\right\|^{1 + \eta}_{L^1}\\
   &\leq \left\|\mathcal{A}_{\frac{\eta}{1+\eta},\boldsymbol{r}(1+\eta),\mathcal{S}}(g_1,\ldots,g_{m+1})\right\|^{1 + \eta}_{L^1}\\
   &\leq \left(\mathcal{A}_{\frac{\eta}{1+\eta},\boldsymbol{r}(1+\eta),\mathcal{S}}(g_1,\ldots,g_{m+1})\right)^{1+\eta}.
\end{align*}
Based on this, we begin proof \eqref{Nier2.7}.

First we give follow assumptions
\begin{align}\label{121_1}
\left|   \mathcal{A}_{\frac{\eta}{1+\eta},\boldsymbol{r}(1+\eta),\mathcal{S}}(g_1,\ldots,g_{m+1})\right| \leq C_1 \prod_{i=1}^{m + 1}\|g_i\|_{L^{p_i(1 +\eta)}(\omega_i^{p_i})},
\end{align}
and 
\begin{align}\label{121_2}
   \left\|\M^{\d}_{\eta,\boldsymbol{r}}(f_1,\ldots,f_{m+1})\right\|_{L^{\frac{1}{1+\eta}}} \leq C_2 \prod_{i=1}^{m}\|f_i\|_{L^{p_i}(\omega_i^{p_i})},
\end{align}
where $C_1,C_2$ are the smallest constants that makes the inequality hold.
Then the proof of \eqref{Nier2.7} is equivalent to proving $C_1^{1 + \eta} \approx C_2$.

For \eqref{121_1}, we can  rewritten as
\begin{align}\label{121_11}
   \left|\mathcal{A}_{\frac{\eta}{1+\eta},\boldsymbol{r}(1+\eta),\mathcal{S}}(g_1,\ldots,g_{m+1})\right|^{(1 +\eta)} \leq C_1^{(1 +\eta)} \prod_{i=1}^{m + 1}\|f_i\|_{L^{p_i}(\omega_i^{p_i})},
   \end{align}

Therefore, it is quite obvious that $C_2 \lesssim C_1^{1 + \eta}$ can be derived from \eqref{121_4}, \eqref{121_2} and \eqref{121_11}. For the converse inequality, we estimate
\begin{align*}
   \left|   \mathcal{A}_{\frac{\eta}{1+\eta},\boldsymbol{r}(1+\eta),\mathcal{S}}(g_1,\ldots,g_{m+1})\right| &\leq 2^{\frac{\eta}{1 + \eta} +1 }  \sum_{Q \in \mathcal{S}} \mu(E_Q)^{\frac{\eta}{1 + \eta} +1 }\prod_{i=1}^{m+1}\langle g_i \rangle_{r_i(1+\eta),Q}\\
   &\leq 2^{\frac{\eta}{1 + \eta} +1 } \sum_{Q \in \mathcal{S}}\int_{E_Q} \M^{\d}_{\frac{\eta}{1+\eta},\boldsymbol{r}(1+\eta)}(g_1,\ldots,g_{m+1})\\
   & \leq 2^{\frac{\eta}{1 + \eta} +1 }  \left\|\M^{\d}_{\frac{\eta}{1+\eta},\boldsymbol{r}(1+\eta)}(g_1,\ldots,g_{m+1})\right\|_{L^1}\\
   & = 2^{\frac{\eta}{1 + \eta} +1 } \left\|\M^{\d}_{\eta,\boldsymbol{r}}(f_1,\ldots,f_{m+1})\right\|^{\frac{1}{1+\eta}}_{L^{\frac{1}{1+\eta}}}.
\end{align*}
This demonstrates the $ C_1^{1 + \eta} \lesssim C_2$. In this way, we have proved \eqref{Nier2.7}.

{\bf Proof of \eqref{Nier2.8}.}

To prove \eqref{Nier2.8} and establish the implication (i) $\Rightarrow$ (iv), we observe that from \eqref{def.of.weighted.condi._2}, it follows that for a sparse collection $\mathcal{S}$ in a dyadic grid $\d$, and for $\varTheta = \max\limits_{j=1, \ldots, m+1} \left\{ \frac{\frac{1}{r_j}}{\frac{1}{r_j} - \frac{1}{p_j}} \right\} =\max \left\{ {\frac{{{p_1}}}{{{p_1} - {r_1}}}, \cdots ,\frac{{{p_m}}}{{{p_m} - {r_m}}},\frac{{q'}}{{q' - s'}}} \right\}$, we have
\begin{align*}
   \mathcal{A}_{\eta,\boldsymbol{r}, \mathcal{S}}(g_1,\ldots,g_{m+1}) & = \sum_{Q \in \mathcal{S}}\left(\prod_{j=1}^{m+1}\left\langle f_j\right\rangle_{r_j, Q}\right)\mu(Q)^{1 + \eta}\\
   &=\sum_{Q \in \mathcal{S}}\left(\prod_{j=1}^{m+1}\left\langle f_j v_j^{-\frac{1}{r_j}}\right\rangle_{r_j, Q}^{v_j}\left\langle v_j\right\rangle_{1, Q}^{\frac{1}{r_j}}\right)\mu(Q)^{1 + \eta}\\
   &\lesssim [\boldsymbol{\omega}]^{\varTheta}_{(\boldsymbol{p},q),(\boldsymbol{r}, \infty)} \sum_{Q \in \mathcal{S}} \prod_{j=1}^{m+1}\left\langle f_j v_j^{-\frac{1}{r_j}}\right\rangle_{r_j, Q}^{v_j} v_j\left(E_Q\right)^{\frac{1}{p_j}}\\
   &\leq [\boldsymbol{\omega}]^{\varTheta}_{(\boldsymbol{p},q),(\boldsymbol{r}, \infty)} \sum_{Q \in \mathcal{S}} \prod_{j=1}^{m+1}\left(\int_{E_Q} M_{r_j}^{v_j, \d}\left(f_j v_j^{-\frac{1}{r_j}}\right)^{p_j} v_j \mathrm{~d} x\right)^{\frac{1}{p_j}}\\
   & \leq [\boldsymbol{\omega}]^{\varTheta}_{(\boldsymbol{p},q),(\boldsymbol{r}, \infty)} \prod_{j=1}^{m+1}\left\|M_{r_j}^{v_j, \d}\left(f_j v_j^{-\frac{1}{r_j}}\right)\right\|_{L^{p_j}\left(v_j\right)}\\
   &\lesssim c^*[\boldsymbol{\omega}]^{\varTheta}_{(\boldsymbol{p},q),(\boldsymbol{r}, \infty)}\prod_{j=1}^{m+1}\left\|f_j\right\|_{L^{p_j}\left(\omega_j^{p_j}\right)},
\end{align*}
where we used Hölder's inequality on the sum in the fifth step.
In the final step, we used that the weighted dyadic maximal operator $M_r^{u, \d} h := \sup_{Q \in \d} \langle h \rangle_{r, Q}^u \chi_Q$ is bounded on $L^q(u)$ for $q > r$, with the bound $\left[\frac{\frac{1}{r}}{\frac{1}{r} - \frac{1}{q}}\right]^{\frac{1}{r}}$, which is uniform in $u$. This uniform estimate over the sparse collection $\mathcal{S}$ gives \eqref{Nier2.8}, completing the proof.

\end{proof}

\section{\bf  New Bloom type weighted estimates for {\tt  ($m+1$)-linear fractional   sparse forms}}\label{Bloom.estimate.}

In this section, we provide a Bloom type estimate for the $\TB$. To begin, we simplify the expression by utilizing the norm equivalence from Cascante-Ortega-Verbitsky. Next, we estimate it through the weighted quantitative boundedness of the sparse form $\mathcal{A}_{\eta,\vec{r}, \mathcal{S}}$. Finally, we address the problem using the method from \cite{Lerner2018}.

The following theorem is the main result of this section, where (1) is called maximal Bloom type weight method which improves \cite[Thoerem 1.24]{CenSong2412}, and (2) is a new Bloom type estimate different from (1). 

\begin{theorem}\label{quan.main}
Let $m \in \N$, $\eta \in [0,m)$, $i \in \tau_m$, $j \in \tau$, and $k \in \tau'$, with $\tau' \subseteq \tau \subseteq \tau_m$, $\mathbf{k}$ be a multi-index, and $\mathcal{S} \subseteq \d$ be a sparse family.
Let $r_i, s' \in [1, \infty)$, and $p_i,q\in (1, \infty)$ with $\frac{1}{q}:= \sum\limits_{j=1}^m\frac{1}{p_j} -  \eta$. Given $\omega_i$, $\nu_j$, and $u_i$ are  weights. Set $\nu_k=(u_k /\omega_k)^{-\frac{r_k}{{\tt a} + \gamma_k - 1}}$ with ${\tt a}:=\lfloor k_kr_k\rfloor$, $\gamma_k:={k_kr_k}-({\tt a}-1) \in[1,2)$. 
   \begin{enumerate}
      \item 
If $b_k \in {\rm BMO}_{\nu_k}$ and $b_{\ell} \in {\rm BMO}_{\nu_0}$ with $\ell \in \tau \backslash \tau'$, where $\nu_0 := \max\limits_{j} \{\nu_{\ell}\}$. Define $v_1=\prod\limits_{i=1}^{m} u_i$ and $\nu_0:=(v_1/\omega^{2\gamma_1 - 3})^{\frac{s'}{\gamma_1 + {\tt L} - 1}}$, where ${\tt L} := \lfloor s' \cdot \sum\limits_{\ell} k_{\ell} \rfloor$ and $\gamma_1 := s' \cdot \sum\limits_{\ell} k_{\ell} - ({\tt L} - 1) \in [1, 2)$.
 Then 
\begin{align*}
   &\quad \sup_{\S \subseteq \d}\left\|\TB\right\|_{\prod\limits_{i=1}^{m}L^{p_i}\left(\omega_i^{p_i}\right) \times L^{q'}(\omega^{-q'})\rightarrow \mathbb{R}} \\
   &\lesssim  \sup\limits_{\mathcal{S} \subseteq \d} \left\|\Lambda_{\mici,\boldsymbol{r}_{\tt L}, \mathcal{S}}\right\|_{\prod\limits_{i=1}^{m}L^{p_i}\left(u_i^{p_i}\right) \times L^{q'}(v_1^{-q'})\rightarrow \mathbb{R}} \cdot \prod_{i \in \tau'}\|b_{i}\|^{k_{i}}_{{\rm{BMO}}_{\nu_{i}}} \prod_{j \in \tau \backslash \tau'}\|b_{j}\|^{k_{j}}_{{\rm BMO}_{\nu_0}},
\end{align*}
where $ \boldsymbol{r}_{\tt L}=\Big((r_{i})_{i \in \tau_m},s'\Big)$.

      \item If $b_{i_0} \in {\rm BMO}_{\nu_{i_0}}$ for some $i_0 \in \tau \backslash \tau'$, and $b_{\ell} \in {\rm BMO}$ for $\ell \in (\tau \backslash \tau') \backslash \{i_0\}$. Define $v_2=\prod\limits_{i=1}^{m} u_i$ and ${\nu _{{i_0}}}: = {\left( {{{{v_2}}}/{{{\omega ^{2{\gamma _2} - 3}}}}} \right)^{\frac{{2s'}}{{{\gamma _2} + {\tt {\tilde L}} - 1}}}}$, where ${\tt {\tilde L}} := \lfloor 2k_{i_0}s' \rfloor$ and $\gamma_2 := 2k_{i_0}s' - ({\tt {\tilde L}} - 1) \in [1, 2)$.
 Then 
      \begin{align*}
   &\quad \sup_{\S \subseteq \d}\left\|\TB\right\|_{\prod\limits_{i=1}^{m}L^{p_i}\left(\omega_i^{p_i}\right) \times L^{q'}(\omega^{-q'})\rightarrow \mathbb{R}}\\
   &\lesssim  \sup\limits_{\mathcal{S} \subseteq \d} \left\|\Lambda_{\mici,\boldsymbol{r}_{\tt {\tilde L}}, \mathcal{S}}\right\|_{\prod\limits_{i=1}^{m}L^{p_i}\left(u_i^{p_i}\right) \times L^{q'}(v_2^{-q'})\rightarrow \mathbb{R}} \cdot \|b_{i_0}\|^{k_{i_0}}_{{\rm{BMO}}_{\nu_{i_0}}}\prod_{i \in \tau'}\|b_{i}\|^{k_{i}}_{{\rm{BMO}}_{\nu_{i}}}\prod_{\substack{j \in \tau \backslash \tau'\\ j \neq i_0}}\|b_{j}\|_{\rm{BMO}}^{k_{j}},
\end{align*}
where $ \boldsymbol{r}_{\tt {\tilde L}}=\Big((r_{i})_{i \in \tau_m},2s'\Big)$.
   \end{enumerate}
\end{theorem}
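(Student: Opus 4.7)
The plan is to pass from the oscillatory sparse form $\TB$ to the clean sparse form $\Lambda_{\mici,\boldsymbol{r}_{\tt L},\mathcal{S}}$ by a cube-by-cube substitution that relocates the BMO weights from the symbols to the ambient Lebesgue space. For $i_1 \in \tau'$ I would write $f_{i_1}=\tilde f_{i_1}\,\nu_{i_1}^{-k_{i_1}}$; in view of the identity $u_{i_1}=\omega_{i_1}\nu_{i_1}^{-k_{i_1}}$ (which one reads off from $\nu_{i_1}=(u_{i_1}/\omega_{i_1})^{-r_{i_1}/({\tt a}+\gamma_{i_1}-1)}$ by noting ${\tt a}+\gamma_{i_1}-1=k_{i_1}r_{i_1}$), this substitution is norm-preserving: $\|\tilde f_{i_1}\|_{L^{p_{i_1}}(u_{i_1}^{p_{i_1}})}=\|f_{i_1}\|_{L^{p_{i_1}}(\omega_{i_1}^{p_{i_1}})}$. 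The oscillation factor then becomes $(|b_{i_1}-b_{i_1,Q}|/\nu_{i_1})^{k_{i_1}}$, which I would separate from $\tilde f_{i_1}$ by Hölder's inequality with exponents $(\sigma,\sigma')$ close to $(1,\infty)$ and bound using the weighted John--Nirenberg inequality
\begin{align*}
\langle|b-b_Q|^s\rangle_Q^{1/s}\;\lesssim\;\|b\|_{\mathrm{BMO}_{\nu}(X)}\,\langle\nu\rangle_Q,\qquad \nu\in A_\infty,\ s\in[1,2).
\end{align*}
The integer/fractional split $k_{i_1}r_{i_1}={\tt a}+\gamma_{i_1}-1$ with $\gamma_{i_1}\in[1,2)$ is arranged precisely so that the total power stays inside the John--Nirenberg range after iterated Hölder applications; a reverse-Hölder self-improvement, which is available through Theorem~\ref{Maximal_1}, then lets me push $\sigma\to 1^+$, collapsing the residual $\langle \tilde f_{i_1}^\sigma\rangle_{r_{i_1},Q}^{1/\sigma}$ back to $\langle\tilde f_{i_1}\rangle_{r_{i_1},Q}$.

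The joint factor $\langle(\prod_{i_2\in\tau\setminus\tau'}|b_{i_2}-b_{i_2,Q}|^{k_{i_2}})\,g\rangle_{t,Q}$ is treated in the same spirit, with the only difference between the two cases being how the product is distributed. In case (1) all $b_\ell\in\mathrm{BMO}_{\nu_0}$, so I would apply multi-Hölder with exponents proportional to $k_\ell$---using the split $t\sum_\ell k_\ell={\tt L}-1+\gamma_1$, $\gamma_1\in[1,2)$---to bring each factor independently into the weighted John--Nirenberg range. The weights then combine into the prescribed relation $\nu_0=(v_1/\omega^{2\gamma_1-3})^{t/(\gamma_1+{\tt L}-1)}$ and the integrability of $g$ remains $t$. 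In case (2) only $b_{i_0}$ carries a weighted BMO, so I would first peel $|b_{i_0}-b_{i_0,Q}|^{k_{i_0}}g$ off from $\prod_{\ell\neq i_0}|b_\ell-b_{\ell,Q}|^{k_\ell}$ via Hölder with exponent $2$; this doubling is the source of both the jump from $t$ to $2t$ in the integrability of $g$ and of the factor $2$ inside $\gamma_2=2t_{i_0}t-({\tt{\tilde L}}-1)$. The weighted part is handled by weighted John--Nirenberg exactly as above, while the classical John--Nirenberg bound of Lemma~\ref{BMOeq.} absorbs the remaining unweighted oscillations, contributing $\prod_{\ell\neq i_0}\|b_\ell\|_{\mathrm{BMO}}^{k_\ell}$. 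Summing over $Q\in\mathcal{S}$ yields the cube-wise bound
\begin{align*}
|\TB|\;\leq\;\mathcal{C}_{0,j}\cdot\Lambda_{\mici,\boldsymbol{r}_{\tt L},\mathcal{S}}(\tilde f_1,\ldots,\tilde f_m,\tilde g),
\end{align*}
so that, using the norm-preservation of the substitutions $f_i\mapsto\tilde f_i$ and $g\mapsto\tilde g$ together with $\sup_{\mathcal{S}\subseteq\d}$, the advertised Bloom-type estimates follow.

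The main obstacle will be the fine bookkeeping of the integer/fractional splits $({\tt a},\gamma_i)$, $({\tt L},\gamma_1)$, and $({\tt{\tilde L}},\gamma_2)$: the specific exponents appearing in the relations among $\nu_k,u_k,\omega_k$ (and those involving $\nu_0,v_1,\omega$ or $\nu_{i_0},v_2,\omega$) must emerge exactly by matching the John--Nirenberg range $[1,2)$ against the total powers $k_ir_i$, $t\sum_\ell k_\ell$, and $2t_{i_0}t$ after the iterated Hölder steps. A secondary technical subtlety is verifying the hypotheses needed for the weighted John--Nirenberg inequality on a general space of homogeneous type---namely $\nu\in A_\infty$---which has to be read off from the underlying $A_{(\boldsymbol{p},q),(\boldsymbol{r},\infty)}$ structure, invoking the sparse pointwise control of $|b-b_Q|$ in Lemma~\ref{zhang:6.1} as a substitute for a direct Euclidean argument.
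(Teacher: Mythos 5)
Your route is genuinely different from the paper's, and it has a gap at its core. After the substitution $f_{i_1}=\tilde f_{i_1}\nu_{i_1}^{-k_{i_1}}$ (the identity $u_{i_1}=\omega_{i_1}\nu_{i_1}^{-k_{i_1}}$ is fine), the cube-wise factor you must control is $\bigl\langle (|b_{i_1}-b_{i_1,Q}|/\nu_{i_1})^{k_{i_1}}\,\tilde f_{i_1}\bigr\rangle_{r_{i_1},Q}$, so after H\"older you need a bound, uniform in $Q$, of the form $\bigl\langle (|b_{i_1}-b_{i_1,Q}|\,\nu_{i_1}^{-1})^{k_{i_1}r_{i_1}\sigma'}\bigr\rangle_{Q}^{1/(r_{i_1}\sigma')}\lesssim \|b_{i_1}\|_{\mathrm{BMO}_{\nu_{i_1}}}^{k_{i_1}}$. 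The weighted John--Nirenberg inequality you invoke says something else: it controls $\langle|b-b_Q|^{s}\rangle_Q^{1/s}$ by $\|b\|_{\mathrm{BMO}_{\nu}}\langle\nu\rangle_Q$, with the weight sitting \emph{outside} the average, not divided pointwise inside it. Averages of $|b-b_Q|/\nu$ are not controlled by the Bloom norm (already for $k=1$, $r=1$: $\frac{1}{\mu(Q)}\int_Q\frac{|b-b_Q|}{\nu}$ versus $\frac{1}{\nu(Q)}\int_Q|b-b_Q|$), and this difficulty is precisely why Bloom-type bounds for (higher-order) commutator sparse forms are proved via the sparse domination of the oscillation, $|b(x)-b_Q|\lesssim\|b\|_{\mathrm{BMO}_\nu}\sum_{P\subseteq Q}\langle\nu\rangle_P\chi_P(x)$ (Lemma~\ref{zhang:6.1}), followed by the Cascante--Ortega--Verbitsky iteration of Proposition~\ref{premainproof}, which is how the paper converts $\mathcal F_1$ and $\mathcal F_2$ into iterated sparse operators $A_{\mathcal S',\nu}^{j}$ applied to $|f|^{r}$ and only then feeds them into $\Lambda_{\mici,\boldsymbol r_{\tt L},\mathcal S'}$; the splits $({\tt a},\gamma_i)$, $({\tt L},\gamma_1)$, $({\tt\tilde L},\gamma_2)$ arise there, not from tuning John--Nirenberg exponents.

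A second step that fails as written is the limit $\sigma\to1^{+}$: there is no reverse-H\"older self-improvement for an arbitrary function $\tilde f_{i_1}$, so $\langle\tilde f_{i_1}^{\sigma}\rangle_{r_{i_1},Q}^{1/\sigma}$ does not collapse to $\langle\tilde f_{i_1}\rangle_{r_{i_1},Q}$ uniformly in $Q$, and Theorem~\ref{Maximal_1} (an equivalence between the weight class $A_{(\boldsymbol p,q),(\boldsymbol r,\infty)}$ and boundedness of $\M_{\alpha(\eta),\boldsymbol r}$ and $\Lambda_{\alpha(\eta),\boldsymbol r,\mathcal S}$) provides no such statement. If you keep $\sigma>1$ you end up with a sparse form in the exponents $\sigma r_i$ rather than the claimed $\boldsymbol r_{\tt L}=((r_i)_{i\in\tau_m},t)$ (resp.\ $2t$), which changes the hypothesis needed on the weights. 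To repair the argument you would essentially have to reproduce the paper's mechanism: dominate each oscillation by the sparse sum of Lemma~\ref{zhang:6.1}, handle the powers $k_ir_i$, $t\sum_\ell k_\ell$, $2t_{i_0}t$ through Proposition~\ref{premainproof}, and then estimate the weighted norms of the resulting iterated sparse operators via \eqref{sparse.bound.}, which is where the $A_p$-characteristic factors in Remark~\ref{Max.method} come from.
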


\begin{remark}\label{Max.method}
~~

Let $\omega_1,\omega_2,\omega_3$ are weights, $x_1,\ldots,x_5$ are constants, $\varTheta:=\max \left\{ {\frac{{{p_1}}}{{{p_1} - {r_1}}}, \cdots ,\frac{{{p_m}}}{{{p_m} - {r_m}}},\frac{{q'}}{{q' - s'}}} \right\}$, and we define the constant
\begin{align}\notag
&\quad\mathcal{C}_{\omega_1,\omega_2,\omega_3}(x_1,\cdots,x_5)\\ \label{weight.constants}
&=:\left(\left[\omega_1^{x_3}\right]^{\frac{x_1 - 2}{2}}_{A_{x_2}} \left[\omega_2^{x_4} \cdot \omega_2^{x_5}\right]^{\frac{x_1}{2}}_{A_{x_2}} \right)^{\max \left\{1, \frac{1}{x_2-1}\right\}} [\omega_2]_{A_{x_2}}^{\max(1,\frac{1}{x_2-1})}[\omega_2^{x_3} \cdot \omega_3^{-|x_2|}]_{A_{x_2}}^{(2 - \gamma)\max(1,\frac{1}{x_2-1})}.
\end{align}

{\bf Further 1:} 
Suppose that  $\vec u \in A_{(\vec{ p},q),(\vec{r},s)}(X)$, 
under the assumption of (1) in Theorem \ref{quan.main}, we have
   \begin{align*}
\sup\limits_{\mathcal{S} \subseteq \d} \left\|\TB\right\|_{\prod\limits_{i=1}^{m}L^{p_i}\left(\omega_i^{p_i}\right)\times L^{q'}(\omega^{-q'}) \rightarrow \mathbb{R}} \lesssim \mathcal{C}_{1},
   \end{align*}
   where
\begin{align*}
   \mathcal{C}_{1} = &\prod_{i \in \tau'}\|b_{i}\|^{k_{i}}_{{\rm{BMO}}_{\nu_{i}}} \prod_{j \in \tau \backslash \tau'}\|b_{j}\|^{k_{j}}_{{\rm BMO}_{\nu_0}} \times [\vec{u}]^{\varTheta}_{(\vec{p},q),(\vec{r}, s)} \\
   &\times \prod_{i \in \tau'}^{} \left(\mathcal{C}_{u_i,\omega_i,\nu_i}\left({\tt a},\frac{p_i}{r_i},p_i,p_i,-\frac{\gamma_i p_i}{r_i}\right)\right)^{\frac{1}{r_i}} \times  \left(\mathcal{C}_{v_1,\omega,\nu_0}\left({\tt L},-\frac{q'}{s'},-q',q'(3-2\gamma_1),-\frac{\gamma_1 q'}{s'}\right)\right)^{\frac{1}{s'}}.
\end{align*}

{\bf Further 2:} Under the assumption of (2) in Theorem \ref{quan.main}, we have
   \begin{align*}
\sup\limits_{\mathcal{S} \subseteq \d} \left\|\TB\right\|_{\prod\limits_{i=1}^{m}L^{p_i}\left(\omega_i^{p_i}\right)\times L^{q'}(\omega^{-q'}) \rightarrow \mathbb{R}} \lesssim\mathcal{C}_{2},
   \end{align*}
where
\begin{align*}
   \mathcal{C}_{2} = &\|b_{i_0}\|^{k_{i_0}}_{{\rm{BMO}}_{\nu_{i_0}}}\prod_{i \in \tau'}\|b_{i}\|^{k_{i}}_{{\rm{BMO}}_{\nu_{i}}}\prod_{\substack{j \in \tau \backslash \tau'\\ j \neq i_0}}\|b_{j}\|_{\rm{BMO}}^{k_{j}} \times [\vec{u}]^{\varTheta}_{(\vec{p},q),(\vec{r}, s)} \\
   &\times \prod_{i \in \tau'}^{} \left(\mathcal{C}_{u_i,\omega_i,\nu_i}\left({\tt a},\frac{p_i}{r_i},p_i,p_i,-\frac{\gamma_i p_i}{r_i}\right)\right)^{\frac{1}{r_i}} \times  \left(\mathcal{C}_{v_2,\omega,\nu_{i_0}}\left({\tt {\tilde L}},-\frac{q'}{2s'},-q',q'(3-2\gamma_2),-\frac{\gamma_2 q'}{2s'}\right)\right)^{\frac{1}{s'}}.
\end{align*}
\end{remark}
Before presenting the formal proof, we first introduce the following results that will be useful for estimating the mean in the subsequent proof steps. Specifically, we begin with the two norm equivalences from Cascante-Ortega-Verbitsky \cite{Cas2004}.
\begin{lemma}\label{le:eqnorm_}
 Let $p \in[1, \infty)$, let $\omega$ be a weight and let $\lambda_Q \geq 0$ for all $Q \in \d$. Then
$$
\left\|\sum_{Q \in \d} \lambda_Q \chi_Q\right\|_{L^p(\omega)} \approx\left(\sum_{Q \in \d} \lambda_Q\left(\frac{1}{\omega(Q)} \sum_{Q^{\prime} \in \d, Q^{\prime} \subseteq Q} \lambda_{Q^{\prime}} \omega\left(Q^{\prime}\right)\right)^{p-1} \omega(Q)\right)^{1 / p}
$$
\end{lemma}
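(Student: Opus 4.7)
This is a dyadic norm-equivalence of Cascante--Ortega--Verbitsky type, and the natural strategy is to polarise the left-hand side and reorganise it through the dyadic tree structure. Writing $f := \sum_{Q \in \d} \lambda_Q \chi_Q$ and
$T_\omega(\lambda)(Q) := \frac{1}{\omega(Q)} \sum_{Q' \subseteq Q} \lambda_{Q'}\,\omega(Q')$,
I would first expand
$\|f\|_{L^p(\omega)}^p = \int f \cdot f^{p-1}\,d\omega = \sum_{Q \in \d} \lambda_Q \int_Q f^{p-1}\,d\omega$,
so everything reduces to comparing $\int_Q f^{p-1}\,d\omega$ with $T_\omega(\lambda)(Q)^{p-1}\omega(Q)$ after testing against the weights $\lambda_Q$. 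The case $p = 1$ is essentially a tautology, since both sides collapse to $\sum_Q \lambda_Q \omega(Q)$, so I will assume $p > 1$.

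For the lower bound $\gtrsim$, I would exploit dyadic nestedness: if $x \in Q' \subseteq Q$ then $\lambda_{Q'} \leq f(x)$, hence
$T_\omega(\lambda)(Q)\,\omega(Q) = \sum_{Q' \subseteq Q} \lambda_{Q'}\,\omega(Q') \leq \int_Q f\,d\omega$.
Raising to the power $p-1$ and summing against $\lambda_Q$ yields
$\sum_Q \lambda_Q T_\omega(\lambda)(Q)^{p-1} \omega(Q) \leq \sum_Q \lambda_Q \omega(Q) \bigl(\tfrac{1}{\omega(Q)}\int_Q f\,d\omega\bigr)^{p-1}$,
and the right side is bounded by $\|f\|_{L^p(\omega)}^p$ via the dyadic Carleson embedding theorem applied to the Carleson sequence $a_Q := \lambda_Q\,\omega(Q)$, whose Carleson norm is controlled by $1$ using the same pointwise comparison one more time.

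The upper bound $\lesssim$ is the harder direction and will be handled by a Calder\'on--Zygmund stopping-time argument on $T_\omega(\lambda)$. Fix a top cube $Q_0$ and recursively select the maximal dyadic descendants $Q^\ast \subsetneq Q_0$ with $T_\omega(\lambda)(Q^\ast) \geq 2\, T_\omega(\lambda)(Q_0)$, producing a principal family $\mathcal{F}(Q_0)$. On the good set $Q_0 \setminus \bigcup \mathcal{F}(Q_0)$ one has $f \lesssim T_\omega(\lambda)(Q_0)$ pointwise, which gives $\int_{Q_0 \setminus \bigcup \mathcal{F}(Q_0)} f^{p-1}\,d\omega \lesssim T_\omega(\lambda)(Q_0)^{p-1}\,\omega(Q_0)$. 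Iterating across the resulting stopping tree and summing the geometric series (using sparseness of the stopping family to absorb measure factors) yields the required upper bound.

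The principal obstacle will be the upper bound for $p \neq 2$. For $p = 2$ the double sum $\sum_{Q,Q'} \lambda_Q \lambda_{Q'} \omega(Q \cap Q')$ telescopes by symmetry of the dyadic incidence relation $\{Q \cap Q' \neq \emptyset\} = \{Q \subseteq Q'\} \cup \{Q' \subseteq Q\}$, and the identity falls out immediately. For general $p \in (1,\infty)$ this symmetry is lost and the stopping-time/Carleson-embedding machinery above is needed; extra care will be required in the regime $p > 2$, where the failure of subadditivity of $t \mapsto t^{p-1}$ forces the use of $(a+b)^{p-1} \leq 2^{p-2}(a^{p-1} + b^{p-1})$ when splitting $f$ on a stopping cube into its constant part (coming from ancestors) and its localised part. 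Once this decomposition is in place the iteration closes and the proof finishes.
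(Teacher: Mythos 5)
You should know at the outset that the paper does not prove Lemma \ref{le:eqnorm_} at all: it is quoted from Cascante--Ortega--Verbitsky \cite{Cas2004}, together with the companion pointwise inequality of Lemma \ref{killr_pre}, so your attempt has to stand on its own. Your reductions ($p=1$ trivial, $p=2$ by symmetry of the incidence relation) and the first estimate $T_\omega(\lambda)(Q)\,\omega(Q)\le\int_Q f\,d\omega$ are correct, but the key step in your ``$\gtrsim$'' direction is not: the sequence $a_Q=\lambda_Q\omega(Q)$ is \emph{not} a Carleson sequence with norm controlled by $1$. The pointwise comparison only gives $\sum_{Q\subseteq R}a_Q\le\int_R f\,d\omega$, which is not $\lesssim\omega(R)$ uniformly in the data: take $\omega$ Lebesgue and $\lambda_Q=1$ for every dyadic $Q\subseteq R$ of the first $N$ generations, so that $\sum_{Q\subseteq R}a_Q=(N+1)\,\omega(R)$. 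Moreover you invoke the embedding with the exponent $p-1$, while the Carleson embedding theorem controls $\sum_Q a_Q\langle f\rangle_{Q,\omega}^{p}$. The step is repairable, but by a different tool: since $\langle f\rangle_{Q,\omega}^{\,p-1}\le\inf_{x\in Q}\bigl(M^{\mathcal{D}}_{\omega}f(x)\bigr)^{p-1}$, one gets $\sum_Q\lambda_Q\omega(Q)\langle f\rangle_{Q,\omega}^{\,p-1}\le\int f\,(M^{\mathcal{D}}_{\omega}f)^{p-1}\,d\omega\le\|f\|_{L^p(\omega)}\|M^{\mathcal{D}}_{\omega}f\|_{L^p(\omega)}^{p-1}\le (p')^{p-1}\|f\|_{L^p(\omega)}^{p}$, using the universal $L^p(\omega)$ bound for the weighted dyadic maximal operator.

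The ``$\lesssim$'' direction has a more serious gap: the pointwise claim that $f\lesssim T_\omega(\lambda)(Q_0)$ on $Q_0\setminus\bigcup\mathcal{F}(Q_0)$ is false. Stopping on $T_\omega(\lambda)$ controls each individual coefficient ($\lambda_Q\le T_\omega(\lambda)(Q)<2T_\omega(\lambda)(Q_0)$ for a non-stopping $Q\ni x$), but not the accumulated sum $f(x)=\sum_{Q\ni x}\lambda_Q$ along the chain of cubes containing $x$. Concretely, on $\mathbb{R}$ with $\omega\equiv1$ take a decreasing chain $Q_0\supset Q_1\supset\cdots\supset Q_N$ of dyadic intervals and $\lambda_{Q_j}=\varepsilon$ for all $j$, $\lambda_Q=0$ otherwise: then $T_\omega(\lambda)(Q_j)<2\varepsilon\le 2T_\omega(\lambda)(Q_0)$ for every $j$, so no stopping cube is ever selected and the ``good set'' is all of $Q_0$, yet $f=(N+1)\varepsilon$ on $Q_N$ while $T_\omega(\lambda)(Q_0)<2\varepsilon$. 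Since your entire upper bound (including the geometric-series iteration) is derived from this pointwise estimate, the argument does not close as written; your worry about subadditivity of $t\mapsto t^{p-1}$ for $p>2$ is not the real obstruction. The route consistent with the paper is the elementary pointwise inequality of Lemma \ref{killr_pre}, $\bigl(\sum_Q\lambda_Q\chi_Q\bigr)^p\le p\sum_Q\lambda_Q\chi_Q\bigl(\sum_{Q'\subseteq Q}\lambda_{Q'}\chi_{Q'}\bigr)^{p-1}$, which after integration and Jensen's inequality (legitimate because $p-1\in[0,1]$) settles $1\le p\le 2$ immediately; for $p>2$ one genuinely needs the Cascante--Ortega--Verbitsky argument (e.g.\ an iteration/induction on the range of $p$ or a duality argument with principal cubes for the dual function), not a single stopping family at threshold $2T_\omega(\lambda)(Q_0)$.
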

\begin{lemma}[\cite{Cas2004}, (2.4)]\label{killr_pre}
    For any $p \in [1, \infty)$ and non-negative coefficients $\lambda_Q$ associated with each $Q \in \d$, the following inequality holds:
    $$
    \left(\sum_{Q \in \d} \lambda_Q \chi_Q\right)^p \leq p \sum_{Q \in \d} \lambda_Q \chi_Q \left( \sum_{\substack{Q^{\prime} \in \d \\ Q^{\prime} \subseteq Q}} \lambda_{Q^{\prime}} \chi_{Q^{\prime}} \right)^{p-1}.
    $$
\end{lemma}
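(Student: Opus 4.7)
The plan is to reduce the pointwise inequality to a one-variable statement about nested dyadic cubes and then invoke the standard convexity estimate for the power function. Fix an arbitrary point $x \in X$ and observe that $\chi_Q(x) = 1$ precisely when $x \in Q$. Because $\d$ is a dyadic lattice, the set $\d_x := \{Q \in \d : x \in Q\}$ is totally ordered by inclusion, so we can enumerate its elements as a nested chain $Q^{(1)} \subsetneq Q^{(2)} \subsetneq \cdots$ of dyadic ancestors of $x$. Setting $a_k := \lambda_{Q^{(k)}}$ and $S_k := \sum_{j \leq k} a_j$, the pointwise inequality at $x$ becomes the purely scalar claim
\[
\Big(\sum_{k \geq 1} a_k\Big)^p \;\leq\; p \sum_{k \geq 1} a_k\, S_k^{p-1},
\]
since $\{Q' \in \d_x : Q' \subseteq Q^{(k)}\} = \{Q^{(1)}, \ldots, Q^{(k)}\}$.

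For this reduced scalar inequality I would use the elementary convexity bound: for $p \geq 1$ and $0 \leq a \leq b$, one has $b^p - a^p \leq p(b-a)\, b^{p-1}$, which follows from the mean value theorem applied to $t \mapsto t^p$ together with the monotonicity of $t^{p-1}$ on $[0,\infty)$. Applying this with $b = S_k$ and $a = S_{k-1}$ (with the convention $S_0 = 0$) gives
\[
S_k^p - S_{k-1}^p \;\leq\; p\,(S_k - S_{k-1})\, S_k^{p-1} \;=\; p\, a_k\, S_k^{p-1}.
\]
Summing over $k$ telescopes the left-hand side to $\big(\sum_{k} a_k\big)^p$, yielding precisely the desired scalar inequality.

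The final step is to undo the pointwise reduction. Since the derivation at $x$ used only that $\d_x$ is totally ordered, it applies at every $x$, and integrating characteristic functions back in recovers
\[
\Big(\sum_{Q \in \d} \lambda_Q \chi_Q(x)\Big)^p \;\leq\; p \sum_{Q \in \d} \lambda_Q \chi_Q(x) \Big(\sum_{Q' \in \d,\, Q' \subseteq Q} \lambda_{Q'} \chi_{Q'}(x)\Big)^{p-1}
\]
for every $x$, which is the stated pointwise inequality. A minor technical point (not really an obstacle) is truncating to finite subchains when the sum $\sum_k a_k$ is infinite: one applies the telescoping identity to the first $N$ cubes and then passes to the limit using monotone convergence, justified because all $a_k$ are non-negative so $S_k$ is monotone. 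Apart from this routine truncation, the proof is essentially a one-line telescoping argument once the reduction to a chain is in place.
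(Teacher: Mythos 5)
The paper does not prove this lemma at all: it is imported verbatim from Cascante--Ortega--Verbitsky \cite{Cas2004}, inequality (2.4), so there is no in-paper argument to compare against. Your proof is the standard one and is essentially correct: fixing $x$, the cubes of $\d$ containing $x$ form a chain, the inner sum evaluated at $x$ is exactly the partial sum along that chain, and the convexity bound $b^p-a^p\le p(b-a)b^{p-1}$ telescopes to give the scalar inequality. One small imprecision: in a dyadic lattice on a space of homogeneous type the generations are indexed by $k\in\mathbb{Z}$, so the chain $\d_x$ of cubes containing $x$ has arbitrarily small members and in general possesses \emph{no} minimal element; hence the enumeration $Q^{(1)}\subsetneq Q^{(2)}\subsetneq\cdots$ ``starting from the smallest cube'' and the truncation to ``the first $N$ cubes'' are not literally available. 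This is harmless, but the fix should be stated as: prove the telescoping bound for an arbitrary finite subchain $Q_1\subsetneq\cdots\subsetneq Q_N$ of $\d_x$ (whose partial sums are dominated by the full inner sums $\sum_{Q'\subseteq Q_k}\lambda_{Q'}\chi_{Q'}(x)$, since $p-1\ge 0$), and then take the supremum over finite subchains, using monotone convergence of the nonnegative series on the left. With that phrasing the argument covers both the infinite-sum case and the absence of a minimal cube, and the proof is complete.
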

Then we use the following proposition to further simplify the proof process.
\begin{proposition}\label{premainproof}
   Let $1 \leq r<  \infty, m \in \mathbb{N}$. For any sparse family $\mathcal{S} \subseteq \d, f \in L^p(\mu)$, $v$ is a weight, we have
   \begin{align*}
      \frac{1}{\mu(Q)}\int_Q\left(\sum_{P \in \mathcal{S}^{\prime}: P \subseteq Q} \frac{v(P)^{}}{\mu(P)} \chi_P\right)^{r m}|f|^r \lesssim \lla A_{\mathcal{S}^{\prime}, v}^{{k } - 1}(h)\rra_{Q},
   \end{align*}
where $$h =  A_{\mathcal{S}^{\prime}}\left(|f|^r\right)^{2-\gamma} \cdot  A_{\mathcal{S}^{\prime}}\left( A_{\mathcal{S}^{\prime}}\left(|f|^r\right) v^{}\right)^{\gamma-1} \cdot v^{}$$ 
with $k:=\lfloor r m\rfloor$, $\gamma:=r m-(k-1) \in[1,2).$
\end{proposition}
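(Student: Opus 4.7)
The plan is to peel off the integer part of the exponent $rm$ using the Cascante--Ortega--Verbitsky inequality (Lemma \ref{killr_pre}), and then handle the remaining fractional exponent $\gamma \in [1,2)$ via a pointwise algebraic identity combined with H\"older's inequality. Write $rm = (k-1) + \gamma$ with $k = \lfloor rm \rfloor$ and $\gamma \in [1,2)$, and set $S(x) := \sum_{P \in \mathcal{S}',\, P \subseteq Q} \tfrac{v(P)}{\mu(P)} \chi_P(x)$ so that the quantity to be estimated is $\tfrac{1}{\mu(Q)} \int_Q S^{rm} |f|^r \, d\mu$.

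First I would iteratively apply Lemma \ref{killr_pre} to $S^{rm}$ exactly $k-1$ times. Each application strips one layer by lowering the exponent from $\alpha$ to $\alpha-1$ and pulling out an outer factor $\sum_P \tfrac{v(P)}{\mu(P)} \chi_P$, nested according to cube inclusion. After chaining, one obtains a pointwise bound of the form
\[
S^{rm}(x) \lesssim \sum_{P_{k-1} \subseteq \cdots \subseteq P_1 \subseteq Q} \prod_{j=1}^{k-1} \tfrac{v(P_j)}{\mu(P_j)} \chi_{P_j}(x) \cdot S_{P_{k-1}}(x)^{\gamma},
\]
where $S_{P_{k-1}}(x) := \sum_{P \subseteq P_{k-1}} \tfrac{v(P)}{\mu(P)} \chi_P(x)$ and the outer nested chain sum is precisely the kernel of the $(k-1)$-fold iterated operator $A_{\mathcal{S}', v}^{k-1}$ acting on the remaining inner piece.

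Next, to treat the fractional factor $S_{P_{k-1}}^{\gamma}$ with $\gamma \in [1,2)$, I would exploit the pointwise identity $\gamma = (2-\gamma)\cdot 1 + (\gamma-1)\cdot 2$, which gives $S_{P_{k-1}}^{\gamma} = S_{P_{k-1}}^{2-\gamma} \cdot (S_{P_{k-1}}^{2})^{\gamma-1}$. Writing $|f|^r = |f|^{r(2-\gamma)} \cdot |f|^{r(\gamma-1)}$ and applying H\"older's inequality with conjugate exponents $\tfrac{1}{2-\gamma}$ and $\tfrac{1}{\gamma-1}$ yields
\[
\int_{P_{k-1}} S_{P_{k-1}}^{\gamma} |f|^r \, d\mu \le \Bigl(\int_{P_{k-1}} S_{P_{k-1}} |f|^r \, d\mu\Bigr)^{2-\gamma} \Bigl(\int_{P_{k-1}} S_{P_{k-1}}^{2} |f|^r \, d\mu\Bigr)^{\gamma-1}.
\]
A routine Fubini computation identifies the first factor with $\int_{P_{k-1}} A_{\mathcal{S}'}(|f|^r) \, dv$, and one further application of Lemma \ref{killr_pre} to $S_{P_{k-1}}^{2}$ identifies the second factor with $\int_{P_{k-1}} A_{\mathcal{S}'}\bigl(A_{\mathcal{S}'}(|f|^r)\, v\bigr) \, dv$, both up to absolute constants. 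These are exactly the two $A_{\mathcal{S}'}$-factors appearing in the definition of $h$, and the trailing $v$ arises from the $dv$ integration.

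Reassembling the nested-chain structure from the first step with the H\"older estimate from the second step produces the average of $h$ tested against $A_{\mathcal{S}', v}^{k-1}$ over $Q$, which upon dualization via Fubini equals $\langle A_{\mathcal{S}', v}^{k-1}(h) \rangle_{Q}$ as claimed. The main obstacle I expect is the combinatorial bookkeeping in the first step: one must verify that iterating Lemma \ref{killr_pre} exactly $k-1$ times produces a chain sum whose structure coincides with the kernel of $A_{\mathcal{S}', v}^{k-1}$, with the correct normalizations $\tfrac{v(P_j)}{\mu(P_j)}$ so that the outer $\tfrac{1}{\mu(Q)}\int_Q$ collapses cleanly into $\langle \cdot \rangle_Q$. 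Once this matching is confirmed, the H\"older bound and the two Fubini identifications close the estimate with no further effort.
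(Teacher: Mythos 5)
Your outer scaffolding matches the paper: peeling off the $k-1$ integer layers with the iterated Cascante--Ortega--Verbitsky inequality (Lemma \ref{killr_pre}) to reach the nested chain sum with a leftover exponent $\gamma\in[1,2)$, and the final iterated Minkowski/Fubini step that converts the chain $\sum_{P_{k-1}\subseteq\cdots\subseteq Q}\prod_{i}\frac{v(P_i)}{\mu(P_i)}\int_{P_{k-1}}h$ into $\int_Q A_{\mathcal{S}',v}^{k-1}(h)$, are exactly the first and last steps of the paper's proof. The genuine gap is in your treatment of the fractional exponent. Your H\"older step is valid and gives $\int_{P_{k-1}}S^{\gamma}|f|^r\,d\mu\le I_1^{2-\gamma}I_2^{\gamma-1}$ with $I_1=\int_{P_{k-1}}A_{\mathcal{S}'}(|f|^r)\,v\,d\mu$ and $I_2\lesssim\int_{P_{k-1}}A_{\mathcal{S}'}\bigl(A_{\mathcal{S}'}(|f|^r)v\bigr)v\,d\mu$, but this product of integrals is \emph{not} identifiable with $\int_{P_{k-1}}h$: writing $\varphi=A_{\mathcal{S}'}(|f|^r)v$ and $\psi=A_{\mathcal{S}'}\bigl(A_{\mathcal{S}'}(|f|^r)v\bigr)v$, H\"older gives $\int_{P_{k-1}}h=\int_{P_{k-1}}\varphi^{2-\gamma}\psi^{\gamma-1}\le I_1^{2-\gamma}I_2^{\gamma-1}$, so your intermediate quantity \emph{dominates} $\int_{P_{k-1}}h$ rather than being dominated by it. To feed your bound into the chain/Minkowski step you would need the reverse inequality $I_1^{2-\gamma}I_2^{\gamma-1}\lesssim\int_{P_{k-1}}\varphi^{2-\gamma}\psi^{\gamma-1}$, i.e.\ a reverse H\"older estimate, which fails in general (for $\gamma=3/2$ it is the converse of Cauchy--Schwarz and forces $\varphi/\psi$ to be essentially constant; here $\psi$ is a sparse average of $\varphi$ and is genuinely different from it).

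The paper avoids this by never globalizing the two exponents: it applies the two-sided norm equivalence of Lemma \ref{le:eqnorm_} with exponent $\gamma$ and with respect to the measure $|f|^r\,d\mu$, which produces the localized expression $\sum_{P_k\subseteq P_{k-1}}v(P_k)\,\langle|f|^r\rangle_{P_k}^{2-\gamma}\,\langle A_{\mathcal{S}'}(|f|^r)v\rangle_{P_k}^{\gamma-1}$, where both powers sit on averages over the \emph{same} cube $P_k$. Each such term is bounded by $\int_{P_k}A_{\mathcal{S}'}(|f|^r)^{2-\gamma}A_{\mathcal{S}'}\bigl(A_{\mathcal{S}'}(|f|^r)v\bigr)^{\gamma-1}v\,d\mu$ pointwise on $P_k$, and summing gives $\int_{P_{k-1}}h$; from there the iteration you describe closes the argument. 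So to repair your proof, replace the global H\"older split on $P_{k-1}$ by this cube-localized use of Lemma \ref{le:eqnorm_} (or any argument that keeps the factors $2-\gamma$ and $\gamma-1$ attached to averages over a common cube before integrating).
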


\begin{proof}[Proof of {Proposition} \ref{premainproof}]
By Lemma \ref{zhang:6.1}, there exists a sparse family $\mathcal{S}' \subseteq \mathcal{D}$ containing $\mathcal{S}$ such that for each $Q \in \mathcal{S}'$,
\begin{align*}
\int_Q \left(\sum_{P \subseteq Q} \frac{v(P)}{\mu(P)} \chi_P \right)^{rm} |f|^r \lesssim \sum_{P_{k-1} \subseteq \cdots \subseteq Q} \prod_{i=1}^{k-1} \frac{v(P_i)}{\mu(P_i)} \int_{P_{k-1}} \left(\sum_{P_k \subseteq P_{k-1}} \frac{v(P_k)}{\mu(P_k)} \chi_{P_k} \right)^\gamma |f|^r
\end{align*}
holds a.e. on $Q$, where $k := \lfloor rm \rfloor$, $\gamma := rm - (k-1) \in [1,2)$, and all $P_i \in \mathcal{S}'$.  

Define $A_{\mathcal{S}',v}(\varphi) := A_{\mathcal{S}'}(\varphi)v$ with $A_{\mathcal{S}',v}^j$ as its $j$-th iteration. By Lemma \ref{le:eqnorm_} and Minkowski's inequality:
\begin{align*}
\int_{P_{k-1}} \left(\sum_{P_k \subseteq P_{k-1}} \frac{v(P_k)}{\mu(P_k)} \chi_{P_k} \right)^\gamma |f|^r 
&\lesssim \sum_{P_k} v(P_k) \langle |f|^r \rangle_{P_k}^{2-\gamma} \left\langle A_{\mathcal{S}'}(|f|^r)v \right\rangle_{P_k}^{\gamma-1} \\
&\leq \int_{P_{k-1}} A_{\mathcal{S}'}(|f|^r)^{2-\gamma} \cdot A_{\mathcal{S}'}\left(A_{\mathcal{S}'}(|f|^r)v \right)^{\gamma-1}v =: \int_{P_{k-1}} h.
\end{align*}
Iterating Minkowski's inequality $(k-1)$ times yields
\begin{align*}
\sum_{P_{k-1} \subseteq \cdots \subseteq Q} \prod_{i=1}^{k-1} \frac{v(P_i)}{\mu(P_i)} \int_{P_{k-1}} h \leq \int_Q A_{\mathcal{S}',v}^{k-1}(h).
\end{align*}
Combining all estimates gives
\begin{align}
\frac{1}{\mu(Q)} \int_Q \left(\sum_{P \subseteq Q} \frac{v(P)}{\mu(P)} \chi_P \right)^{rm} |f|^r \lesssim \lla A_{\mathcal{S}',v}^{k-1}(h) \rra_Q.
\end{align}
\end{proof}
We also need the classic sparse boundedness.

\begin{lemma}[\cite{Moen2014}]
Let $1<p<\infty$,  $\omega \in A_p$. For any sparse family of cubes $\mathcal{S} \subseteq \mathcal{D}$ and $f \in L^p\left(\omega\right)$, we have
\begin{align}\label{sparse.bound.}
   \left\|\sum_{Q \in \mathcal{S}}\langle | f| \rangle_{1, Q} \chi_Q\right\|_{L^p(\omega)} \lesssim[\omega]_{A_p}^{\max \left(1, \frac{1}{p-1}\right)}\|f\|_{L^p(\omega)} .
\end{align}
\end{lemma}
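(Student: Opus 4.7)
The plan is to prove the sharp bound in the pivotal case $p=2$ and then transfer it to the full range $1<p<\infty$ via Rubio de Francia's sharp extrapolation theorem. Writing $\mathcal{A}_\mathcal{S}f:=\sum_{Q\in\mathcal{S}}\langle|f|\rangle_{1,Q}\chi_Q$, the target exponent at $p=2$ is $\max(1,1/(p-1))=1$, and extrapolation from $p_0=2$ will then yield $[\omega]_{A_p}^{\max(1,1/(p-1))}$ for every $p\in(1,\infty)$.

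\textbf{The base case $p=2$.} By duality against the Lebesgue pairing,
\[
\|\mathcal{A}_\mathcal{S}f\|_{L^2(\omega)}=\sup_{\|h\|_{L^2(\sigma)}=1}\sum_{Q\in\mathcal{S}}\langle|f|\rangle_{Q}\langle h\rangle_{Q}|Q|,
\]
where $\sigma:=\omega^{-1}$. I would factorise each average through its natural weight,
\[
\langle|f|\rangle_Q=\langle|f|\sigma^{-1}\rangle_Q^{\sigma}\langle\sigma\rangle_Q,\qquad\langle h\rangle_Q=\langle h\omega^{-1}\rangle_Q^{\omega}\langle\omega\rangle_Q,
\]
and invoke the $A_2$ condition $\langle\sigma\rangle_Q\langle\omega\rangle_Q\leq[\omega]_{A_2}$ to dominate each summand pointwise on $E_Q$ by
\[
[\omega]_{A_2}\,M_\sigma^{\d}(|f|\sigma^{-1})(x)\,M_\omega^{\d}(h\omega^{-1})(x)\,|Q|,
\]
where $M_\omega^{\d}$ and $M_\sigma^{\d}$ denote the weighted dyadic maximal functions. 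Exploiting sparseness via $|Q|\leq2|E_Q|$ and the disjointness of $\{E_Q\}$ collapses the sum into a single integral; Cauchy--Schwarz using the identity $1=\sigma^{1/2}\omega^{1/2}$ (valid because $\sigma\omega\equiv1$ in the $p=2$ case) reduces matters to the $L^2$-boundedness of $M_\sigma^{\d}$ on $L^2(\sigma)$ and of $M_\omega^{\d}$ on $L^2(\omega)$, both with constants independent of the weight. This produces $\|\mathcal{A}_\mathcal{S}f\|_{L^2(\omega)}\lesssim[\omega]_{A_2}\|f\|_{L^2(\omega)}$.

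\textbf{Extrapolation to arbitrary $p$.} Applying the sharp Rubio de Francia extrapolation theorem with base exponent $p_0=2$ and weight bound $\varphi(t)=t$, and noting that $\max\!\bigl(1,(p_0-1)/(p-1)\bigr)=\max(1,1/(p-1))$, yields
\[
\|\mathcal{A}_\mathcal{S}f\|_{L^p(\omega)}\lesssim[\omega]_{A_p}^{\max(1,1/(p-1))}\|f\|_{L^p(\omega)}
\]
for every $p\in(1,\infty)$ and $\omega\in A_p$, which is exactly \eqref{sparse.bound.}.

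\textbf{Main obstacle.} The delicate point is securing the \emph{sharp} exponent $\max(1,1/(p-1))$ rather than the weaker $1+\tfrac{1}{p(p-1)}$ that a naive direct argument produces. Indeed, a single-step proof for general $p$ splitting $\langle\sigma\rangle_Q\langle\omega\rangle_Q|Q|\leq[\omega]_{A_p}^{1/p}\sigma(Q)^{1/p}\omega(Q)^{1/p'}$ and invoking Carleson embedding on both factors forces the use of $A_\infty$ for both $\omega$ and $\sigma$; since $[\sigma]_{A_\infty}\lesssim[\omega]_{A_p}^{1/(p-1)}$, this route leaks a spurious $[\omega]_{A_p}^{1/(p(p-1))}$ factor. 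The virtue of the two-step strategy is that the $p=2$ case is clean (the symmetry $\sigma\omega\equiv1$ eliminates the Carleson step), and sharp extrapolation then propagates the bound to every $p$ without inflating the exponent.
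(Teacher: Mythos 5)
Your proposal is correct, but note that the paper does not prove this lemma at all: it is imported verbatim from \cite{Moen2014}, so there is no internal proof to compare against. Your two-step route (a direct $A_2$ bound by duality, factorization through the dual weight $\sigma=\omega^{-1}$, sparseness $\mu(Q)\lesssim\mu(E_Q)$, and the universally bounded weighted dyadic maximal operators, followed by sharp Rubio de Francia extrapolation with $p_0=2$, $\gamma=1$) is a legitimate and standard way to obtain the exponent $\max(1,\tfrac{1}{p-1})$, and your diagnosis of why the naive one-step Carleson argument leaks an extra $[\omega]_{A_p}^{1/(p(p-1))}$ is accurate. The cited proof of Li--Moen--Sun is instead a direct argument valid for all $p$ at once: it dualizes with $f\sigma$, $h\omega$, splits $\sigma(Q)\omega(Q)/\mu(Q)$ according to whether $p-1\ge 1$ or $p-1<1$ (exchanging the roles of $\omega$ and $\sigma$), and concludes with weighted dyadic maximal/Carleson estimates — no extrapolation needed. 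What each buys: your approach is shorter and conceptually cleaner at $p=2$, but imports the sharp extrapolation theorem as a black box; the direct proof is purely dyadic and self-contained. Two small points to tidy up. First, the lemma as stated in this paper lives on a space of homogeneous type ($\mathcal{S}\subseteq\mathcal{D}$, measure $\mu$), not on $\mathbb{R}^n$; your $A_2$ step transfers verbatim to any dyadic lattice in $X$, but the extrapolation step then requires the sharp weighted bound for the maximal operator (Buckley's $[\omega]_{A_p}^{1/(p-1)}$) in that generality, which is available via the Hyt\"onen--Kairema adjacent dyadic systems — you should either cite this or run the direct dyadic proof to stay self-contained. Second, in the duality identity the supremum should be taken over nonnegative $h$ with $\|h\|_{L^2(\sigma)}=1$, and the factor $2$ from sparseness should be replaced by the sparseness constant $\delta^{-1}$ of Definition \ref{D:sparse}; both are cosmetic.
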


\begin{proof}[Proof of theorem \ref{quan.main}]
We begin our proof with an important observation: the quantitative estimation of $\TB$ is equivalent to that of
\begin{align*}
   &\quad \TB \\
   &= \sum_{Q \in \S} \mu(Q)^{\mici + 1}  \prod_{i_1 \in \tau'}\lla\left|b_{i_1}- b_{i_1,Q}\right|^{k_{i_1}} f_{i_1}\rra_{r_{i_1},Q}\lla\prod_{i_2 \in \tau \backslash \tau'} \left|b_{i_2}- b_{i_2,Q}\right|^{k_{i_2}}g\rra_{s',Q} \prod_{j \in (\tau')^c} \langle\left|f_j\right|\rangle_{r_j,Q}.
   \end{align*}
For simple, we denote that 
\begin{align*}
   \mathcal{F}_1 = \lla\left|b_{i_1}- b_{i_1,Q}\right|^{k_{i_1}} f_{i_1}\rra_{r_{i_1},Q}, \quad 
   \mathcal{F}_2 = \lla\prod_{i_2 \in \tau \backslash \tau'} \left|b_{i_2}- b_{i_2,Q}\right|^{k_{i_2}}g\rra_{s',Q}.
\end{align*}
The proof is transformed into the estimation of $\mathcal{F}_1$ and $\mathcal{F}_2$.

Firstly, we conduct an estimation for $\mathcal{F}_1$.  
   Let us begin with the estimate to $\F_1$, by Lemma \ref{zhang:6.1}, 
,since $b_{i_1} \in {\rm BMO}_{\nu_{i_1}}$, where $\nu_{i_1}$ is a weight to be chosen later, we can derive the following
   \[
   |b_{i_1}(x)-b_{i_1,Q}|\leq C\|b_{i_1}\|_{{\rm BMO}_{\nu_{i_1}}(X)}\sum_{P\in {\mathcal{S}'},\ P\subseteq Q}\nu_{i_1,P}\chi_{P}(x).
   \]
   Then, there exists a sparse collection of cubes $\mathcal{S} \subseteq \mathcal{S}^{\prime} \subseteq \d$ such that for any $Q \in \mathcal{S}$,
   \begin{align*}
      \lla\left|b_{i_1}- b_{i_1,Q}\right|^{k_{i_1}} f_{i_1}\rra^{r_{i_1}}_{r_{i_1},Q} & \lesssim \|b_{i_1}\|^{k_{i_1}r_{i_1}}_{{\rm BMO}_{\nu_{i_1}}(X)}\frac{1}{\mu(Q)} \int_Q \left(\sum_{P \in \mathcal{S}^{\prime}: P \subseteq Q} \nu_{i_1,P}\chi_{P}\right)^{k_{i_1}r_{i_1}}|f_{i_1}|^{r_{i_1}} \\
      & = \|b_{i_1}\|^{k_{i_1}r_{i_1}}_{{\rm BMO}_{\nu_{i_1}}(X)}\frac{1}{\mu(Q)} \int_Q \left(\sum_{P \in \mathcal{S}^{\prime}: P \subseteq Q}\frac{\nu_{i_1}(P)}{\mu(P)}\chi_{P}\right)^{k_{i_1}r_{i_1}}|f_{i_1}|^{r_{i_1}} .\\
   \end{align*}
Using Proposition \ref{premainproof} we obtain
\begin{align}\label{est.F_1}
   \lla| b_{i_1}- b_{i_1,Q}|^{k_{i_1}} f_{i_1}\rra_{r_{i_1},Q} \lesssim \|b_{i_1}\|^{k_{i_1}}_{{\rm BMO}_{\nu_{i_1}}(X)}\lla\mathcal{A}_{\mathcal{S}^{\prime}, \nu_{i_1}}^{{\tt a } - 1}(h)^{\frac{1}{r_{i_1}}}\rra_{r_{i_1},Q},
\end{align}
with
\begin{align*}
   {h} & = {A}_{\mathcal{S}^{\prime}}\left(|f_{i_1}|^{r_{i_1}}\right)^{2-\gamma_{i_1}} \cdot {A}_{\mathcal{S}^{\prime}}\left({A}_{\mathcal{S}^{\prime}}\left(|f_{i_1}|^{r_{i_1}}\right) \nu_{i_1}^{}\right)^{\gamma_{i_1}-1} \cdot \nu_{i_1}^{},\\
\end{align*}
where ${\tt a}:=\lfloor k_{i_1}r_{i_1}\rfloor$, $\gamma_{i_1}:={k_{i_1}r_{i_1}}-({\tt a}-1) \in[1,2)$ with $i_1 \in \tau'$.

      Next, we focus our attention on $\mathcal{F}_2$. Similar to \eqref{est.F_1}, for $\mathcal{F}_2$, we have
\begin{align*}
   \lla \prod_{i_2 \in \tau \backslash \tau'}| b_{i_2}- b_{i_2,Q}|^{k_{i_2}} g\rra_{s',Q}^{s'} & \lesssim \prod_{i_2 \in \tau \backslash \tau'}\|b_{i_2}\|^{t_{i_2}s'}_{{\rm BMO}_{\nu_0}(X)}\frac{1}{\mu(Q)} \int_Q \left(\sum_{P \in \mathcal{S}^{\prime}: P \subseteq Q} \nu_{0,P}\chi_{P}\right)^{s'\cdot\sum\limits_{i_2}k_{i_2}}|g|^{s'} \\
   & \lesssim \prod_{i_2 \in \tau \backslash \tau'}\|b_{i_2}\|^{t_{i_2}s'}_{{\rm BMO}_{\nu_0}(X)}\frac{1}{\mu(Q)} \int_Q \left(\sum_{P \in \mathcal{S}^{\prime}: P \subseteq Q} \frac{\nu_0(Q)}{\mu(Q)}\chi_{P}\right)^{s'\cdot\sum\limits_{i_2}k_{i_2}}|g|^{s'}, \\ 
\end{align*}
with $\nu_{0}=\max\limits_{i_2 \in \tau \backslash \tau'}\nu_{i_2}$. By Proposition \ref{premainproof}, we demonstrate that
\begin{align}\label{F2:Max.weight}
   \lla \prod_{i_2 \in \tau \backslash \tau'}| b_{i_2}- b_{i_2,Q}|^{k_{i_2}} g\rra_{s',Q} \lesssim \prod_{i_2 \in \tau \backslash \tau'}\|b_{i_2}\|^{k_{i_2}}_{{\rm BMO}_{\nu_0}(X)} \lla A_{\mathcal{S}^{\prime}, \nu_0}^{{\tt L} - 1}(h_1)^{\frac{1}{s'}}\rra_{s',Q},
\end{align}
with $$h_1 =  A_{\mathcal{S}^{\prime}}\left(|f|^{s'}\right)^{2-\gamma_1} \cdot  A_{\mathcal{S}^{\prime}}\left( A_{\mathcal{S}^{\prime}}\left(|f|^{s'}\right) \nu_0^{}\right)^{\gamma_1-1} \cdot \nu_0^{},$$ 
where ${\tt L}:=\lfloor {s'\cdot\sum\limits_{i_2}k_{i_2}}\rfloor$, $\gamma_1:={s'\cdot\sum\limits_{i_2}k_{i_2}}-({\tt L}-1) \in[1,2).$

We can definitely adopt another approach to estimate $\mathcal{F}_2$.
By applying Hölder's inequality, we conclude
\begin{align*}
   \lla\prod_{i_2 \in \tau \backslash \tau'} \left|b_{i_2}- b_{i_2,Q}\right|^{k_{i_2}}g\rra_{s',Q} & \leq \left\langle  \prod_{\substack{i_2 \in \tau \backslash \tau'\\ i_2 \neq i_0}} \left|b_{i_2}- b_{i_2,Q}\right|^{k_{i_2}}\right\rangle_{2s',Q}\lla \left|b_{i_0}- b_{i_0,Q}\right|^{k_{i_0}}g \rra_{2s',Q}\\
   &\leq \prod_{\substack{i_2 \in \tau \backslash \tau'\\ i_2 \neq i_0}} \left\langle \left|b_{i_2}- b_{i_2,Q}\right|\right\rangle^{k_{i_2}}_{2(|(\tau')^c| - 1)k_{i_2}s',Q} \lla \left|b_{i_0}- b_{i_0,Q}\right|^{k_{i_0}}g \rra_{2s',Q}\\
   &\overset{\eqref{BMOeq._}}{\lesssim}\prod_{\substack{i_2 \in \tau \backslash \tau'\\ i_2 \neq i_0}}\|b_{i_2}\|_{\rm{BMO}}^{k_{i_2}}\lla \left|b_{i_0}- b_{i_0,Q}\right|^{k_{i_0}}g \rra_{2s',Q},
\end{align*}
where $i_0 \in \tau \backslash \tau'$.

From the above processing, we observe that further estimation essentially involves estimating $\lla \left|b_{i_0}- b_{i_0,Q}\right|^{k_{i_0}}g \rra_{2s',Q}$. 
Similar to \eqref{F2:Max.weight}, it is immediately follows from Proposition \ref{premainproof} that 
\begin{align*}
   \laa| b_{i_0}- b_{{i_0},Q}|^{k_{i_0}} g\raa_{2s',Q}^{2s'} \lesssim \|b_i\|^{2k_{i_0}s'}_{{\rm BMO}_{\nu_{i_0}}(X)}\laa{A}_{\mathcal{S}^{\prime}, \nu_{i_0}}^{{\tt c } - 1}(h_2)\raa_{Q}.
\end{align*}
Then, we obtain
\begin{align}\label{F2:Holder}
   \lla\prod_{i_2 \in \tau \backslash \tau'} \left|b_{i_2}- b_{i_2,Q}\right|^{k_{i_2}}g\rra_{s',Q} \overset{}{\lesssim}\prod_{\substack{i_2 \in \tau \backslash \tau'\\ i_2 \neq i_0}}\|b_{i_2}\|_{\rm{BMO}(X)}^{k_{i_2}}\|b_i\|^{k_{i_0}}_{{\rm BMO}_{\nu_{i_0}}(X)}\laa{A}_{\mathcal{S}^{\prime}, \nu_{i_0}}^{{\tt c } - 1}(h_2)^{\frac{1}{2s'}}\raa_{2s',Q}.
\end{align}
with
\begin{align*}
   h_2& = {A}_{\mathcal{S}^{\prime}}\left(|g|^{2s'}\right)^{2-\gamma_2} \cdot {A}_{\mathcal{S}^{\prime}}\left({A}_{\mathcal{S}^{\prime}}\left(|g|^{2s'}\right) \nu_{i_0}^{}\right)^{\gamma_2-1} \cdot \nu_{i_0}^{},
\end{align*}
where  ${\tt {\tilde L}}:=\lfloor 2k_{i_0}s'\rfloor$, $\gamma_2:= 2k_{i_0}s'-({\tt {\tilde L}}-1) \in[1,2).$

By using \eqref{est.F_1} with \eqref{F2:Max.weight} and \eqref{F2:Holder} respectively, we can obtain the following different estimates.
\begin{align}\notag
   \TB 
   &\lesssim \mathcal{C}_{0,1} \left[\int_{X}\sum_{Q \in \S} \mu(Q)^{\mici}\left(\prod_{i \in \tau'}\lla {A}_{\mathcal{S}^{\prime}, \nu_{i}}^{{\tt a } - 1}(h)^{\frac{1}{r_{i}}}\rra_{r_{i},Q}\right.\right.\\ \notag
   &\quad\left.\left.\lla {A}_{\mathcal{S}^{\prime}, \nu_{i_0}}^{{\tt b } - 1}(h_1)^{\frac{1}{s'}}\rra_{s',Q}  \prod_{j \in (\tau')^c }\left\langle f_j\right\rangle_{r_j,Q}\right)\chi_Q\right]\\ \label{eq:b}
   &= \mathcal{C}_{0,1} \left|\Lambda_{\mici,\boldsymbol{r}_{\tt L}, \S'}( {\vec {\bf A}_{\tt L}})\right|;
\end{align}
\begin{align}\notag
   \TB 
   &\lesssim  \mathcal{C}_{0,2} \left[\int_{X}\sum_{Q \in \S} \mu(Q)^{\mici}\left(\prod_{i \in \tau'}\lla {A}_{\mathcal{S}^{\prime}, \nu_{i}}^{{\tt a } - 1}(h)^{\frac{1}{r_{i}}}\rra_{r_{i},Q}\right.\right.\\ \notag
   &\quad\left.\left.\lla {A}_{\mathcal{S}^{\prime}, \nu_{0}}^{{\tt c } - 1}({h}_2)^{\frac{1}{2s'}}\rra_{2s',Q}  \prod_{j \in (\tau')^c}\left\langle f_j\right\rangle_{r_j,Q}\right)\chi_Q\right]\\ \label{eq:c}
   &= \mathcal{C}_{0,2}\left|\Lambda_{\mici,\boldsymbol{r}_{\tt {\tilde L}}, \S'}( {\vec {\bf A}_{\tt {\tilde L}}})\right|,
\end{align}
where 
\begin{align*}
\mathcal{C}_{0,1} = &\prod_{i \in \tau'}\|b_{i}\|^{k_{i}}_{{\rm{BMO}}_{\nu_{i}}} \prod_{j \in \tau \backslash \tau'}\|b_{j}\|^{k_{j}}_{{\rm BMO}_{\nu_0}},\\
\mathcal{C}_{0,2} = &\|b_{i_0}\|^{k_{i_0}}_{{\rm{BMO}}_{\nu_{i_0}}}\prod_{i \in \tau'}\|b_{i}\|^{k_{i}}_{{\rm{BMO}}_{\nu_{i}}}\prod_{\substack{j \in \tau \backslash \tau'\\ j \neq i_0}}\|b_{j}\|_{\rm{BMO}}^{k_{j}};
\end{align*}

\begin{align*}
   \boldsymbol{r}_{\tt L}&=\Big((r_{i})_{i \in \tau_m},r_{m + 1}\Big),\,\text{with}\,\, r_{m+1}=s',\\
   \boldsymbol{r}_{\tt {\tilde L}}&=\Big((r_{i})_{i \in \tau_m},r_{m + 1}\Big),\,\text{with}\,\, r_{m+1}=2s';\\
\end{align*}
\begin{align*}
   {\vec {\bf A}_{\tt L}}= \left(\left({A}_{\mathcal{S}^{\prime}, \nu_{i}}^{{\tt a } - 1}(h)^{\frac{1}{r_i}}\right)_{i \in \tau'},{A}_{\mathcal{S}^{\prime}, \nu_{0}}^{{\tt b } - 1}({h}_1)^{\frac{1}{s'}},\left(f_j\right)_{j \in (\tau')^c}\right),
\end{align*}
\begin{align*}
   {\vec {\bf A}_{\tt {\tilde L}}}= \left(\left({A}_{\mathcal{S}^{\prime}, \nu_{i}}^{{\tt a } - 1}(h)^{\frac{1}{r_i}}\right)_{i \in \tau'},{A}_{\mathcal{S}^{\prime}, \nu_{i_0}}^{{\tt c } - 1}({h}_2)^{\frac{1}{2s'}},\left(f_j\right)_{j \in (\tau')^c}\right);
\end{align*}
and
\begin{align*}
   C_{{\tt L},\rm{BMO}} = \prod_{i \in \tau'}\|b_{i}\|^{k_{i}}_{{\rm{BMO}}_{\nu_{i}}} \prod_{i_2 \in \tau \backslash \tau'}\|b_{i_2}\|^{k_{i_2}}_{{\rm BMO}_{\nu_0}(X)};
\end{align*}
\begin{align*}
   C_{{\tt  c},\rm{BMO}}=\|b_{i_0}\|^{k_{i_0}}_{{\rm{BMO}}_{\nu_{i_0}}}\prod_{i \in \tau'}\|b_{i}\|^{k_{i}}_{{\rm{BMO}}_{\nu_{i}}}\prod_{\substack{i_2 \in (\tau')^c\\ i_2 \neq i_0}}\|b_{i_2}\|_{\rm{BMO}}^{k_{i_2}}.
\end{align*}

The estimate of $\TB$ actually reduces to the estimation of $\left|\Lambda_{\mici,\boldsymbol{r}_{\tt L}, \S'}( {\vec {\bf A}_{\tt L}})\right|$ and $\left|\Lambda_{\mici,\boldsymbol{r}_{\tt {\tilde L}}, \S'}( {\vec {\bf A}_{\tt {\tilde L}}})\right|$. Using \eqref{Nier2.8} with $ \sum\limits_{j =1}^{m+1} \frac{1}{p_j}=1 + \mici$ and $\prod\limits_{j =1}^{m+1} u_j=1$,
\begin{align}\notag
   &\quad \left|\Lambda_{\mici,\boldsymbol{r}_{\tt L}, \S'}( {\vec {\bf A}_{\tt L}})\right| \\ \label{Max_1}
   &\overset{}{\lesssim} [\vec{u}]^{\varTheta}_{(\vec{p},q),(\vec{r}, s)} \prod_{i \in \tau'}\left\|{A}_{\mathcal{S}^{\prime}, \nu_{i}}^{{\tt a } - 1}(h)^{\frac{1}{r_i}}\right\|_{L^{p_{i}}(u_{i}^{{p_{i}}})}\prod_{j \in (\tau')^c}\left\|f_j\right\|_{L^{p_j}(u_j^{{p_j}})}\left\|{A}_{\mathcal{S}^{\prime}, \nu_{0}}^{{\tt b } - 1}(h_1)^{\frac{1}{t}}\right\|_{L^{p_{m+1}}(u_{m+1}^{{p_{m+1}}})},\\ \notag
   \intertext{due to $p_{m+1}=q'$ and $u_{m+1} = \left(\prod\limits_{i=1}^{m}u_j\right)^{-1}=u^{-1}$, the above}\\ \notag
   &= [\vec{u}]^{\varTheta}_{(\vec{p},q),(\vec{r}, s)} \prod_{i \in \tau'}\left\|{A}_{\mathcal{S}^{\prime}, \nu_{i}}^{{\tt a } - 1}(h)^{\frac{1}{r_i}}\right\|_{L^{p_{i}}(u_{i}^{{p_{i}}})}\prod_{j \in (\tau')^c}\left\|f_j\right\|_{L^{p_j}(u_j^{{p_j}})}\left\|{A}_{\mathcal{S}^{\prime}, \nu_{0}}^{{\tt b } - 1}(h_1)^{\frac{1}{t}}\right\|_{L^{q'}(u^{-{q'}})}.
\end{align}

By the similar step in \cite[Sec. 4.7]{CenSong2412}, we get
\begin{align*}
   & \quad \left\|A_{\mathcal{S}^{\prime}, \nu_{i}}^{{\tt a } - 1}(h)\right\|_{L^{\frac{p_i}{r_i}}(u_{i}^{{p_{i}}})}  =  \left\|A_{\mathcal{S}^{\prime}}\left(A_{\mathcal{S}^{\prime}, \nu_{i}}^{{\tt a } - 2}(h)\right)\right\|_{L^{\frac{p_i}{r_i}}(u_{i}^{{p_{i}}} \nu_{i}^{\frac{p_i}{r_i}})}\\
   &\lesssim \left(\left[u_{i}^{{p_{i}}} \nu_{i}^{\frac{p_i}{r_i}}\right]_{A_{\frac{p_i}{r_i}}}\left[u_{i}^{{p_{i}}} \nu_{i}^{2 \frac{p_i}{r_i}}\right]_{A_{\frac{p_i}{r_i}}} \cdots\left[u_{i}^{{p_{i}}} \nu_{i}^{({\tt a} - 1)\frac{p_i}{r_i}}\right]_{A_{\frac{p_i}{r_i}}}\right)^{\max \left\{1, \frac{1}{\frac{p_i}{r_i}-1}\right\}}\|h\|_{L^{\frac{p_i}{r_i}}\left(u_{i}^{{p_{i}}} \nu_{i}^{({\tt a} - 1)\frac{p_i}{r_i}}\right)}
\end{align*}
 Setting $u_{i}^{{p_{i}}} \nu_{i}^{({\tt a} - 1)\frac{p_{i}}{r_i}} = \mu_{i}$ that is $ \nu_{i} = \left(\mu_{i}/u_{i}^{{p_{i}}}\right)^{\frac{r_i}{({\tt a} - 1)p_i}}$, we obtain
\begin{align}\label{mofang}
   \left\|A_{\mathcal{S}^{\prime}, \nu_{i}}^{{\tt a } - 1}(h)\right\|_{L^{\frac{p_i}{r_i}}(u_{i}^{{p_{i}}})} & \lesssim \left(\left[u_{i}^{{p_{i}}}\right]^{\frac{{\tt a} - 2}{2}}_{A_{\frac{p_i}{r_i}}} \left[\mu_{i}\right]^{\frac{{\tt a}}{2}}_{A_{\frac{p_i}{r_i}}} \right)^{\max \left\{1, \frac{1}{\frac{p_i}{r_i}-1}\right\}}\|h\|_{L^{\frac{p_i}{r_i}}\left(\mu_{i}\right)}.
\end{align}

Finally, we estimate $\|h\|_{L^{\frac{p_{i}}{r_i}}\left(p_{i}\right)}$, since 
\begin{align*}
   \frac{1}{p_{i}} = (2 - \gamma_i)\frac{1}{p_{i}} + (\gamma_i - 1)\frac{1}{p_i},
\end{align*}
by setting
\begin{align*}
   \mu^{{\frac{r_i}{p_i}}}_{i} \nu_{i}= \left(\omega^{{2-\gamma_i}}_{i} \cdot \tilde{\omega}^{\frac{\gamma_i -1}{p_i}}_{i}\right)^{{r_i}}
\end{align*}
we have by Hölder's inequality,
\begin{align*}
   &\|h\|_{L^{\frac{p_i}{r_i}}\left(\mu_{i}\right)}\\
   &=\left\|{A}_{\mathcal{S}^{\prime}}\left(|f_{i}|^{r_i}\right)^{2-\gamma_i} \cdot {A}_{\mathcal{S}^{\prime}}\left({A}_{\mathcal{S}^{\prime}}\left(|f_{i}|^{r_i}\right) \nu_{i}^{}\right)^{\gamma_i-1} \cdot \nu_{i}^{}\right\|_{L^{\frac{p_i}{r_i}}\left(\mu_{i}\right)}\\
   &\leq \left\|{A}_{\mathcal{S}^{\prime}}\left(|f_{i}|^{r_i}\right)^{2-\gamma_i}\right\|_{L^{\frac{p_{i}}{2 - \gamma_i}\cdot \frac{1}{r_i}}(\omega_{i}^{p_i})} \times \left\|{A}_{\mathcal{S}^{\prime}}\left({A}_{\mathcal{S}^{\prime}}\left(|f_{i}|^{r_i}\right) \nu_{i}^{}\right)^{\gamma_i-1} \right\|_{L^{\frac{p_i}{\gamma_i - 1}\cdot \frac{1}{r_i}}(\tilde{\omega}_{i})}\\
   &=:I \times II.\\
\end{align*}

For $I$, it follows immediately from  \eqref{sparse.bound.} that
\begin{align*}
   I \lesssim [\omega_{i}]_{A_{\frac{p_{i}}{r_i}}}^{(2 - \gamma_i)\max(1,\frac{r_i}{p_{i} - r_i})}\|f_{i}\|_{L^{{p_{i}}}(\omega_{i}^{p_i})}^{(2 - \gamma_i)r_i}.
\end{align*}
Analogously, 
\begin{align*}
   II &\lesssim [\tilde{\omega}_{i}]_{A_{\frac{p_i}{r_i}}}^{(2 - \gamma_i)\max(1,\frac{r_i}{p_i - r_i})}\left\|{A}_{\mathcal{S}^{\prime}}\left(|f_{i}|^{r_i}\right) \nu_{i}^{}\right\|^{\gamma_i-1}_{L^{\frac{p_{i}}{r_i}}(\tilde{\omega}_{i})}\\
   \intertext{Setting $\nu^{\frac{p_{i}}{r_i}}_{i} \cdot \tilde{\omega}_{i} = \omega_{i}^{p_i}$}
   &\lesssim [\tilde{\omega}_{i}]_{A_{\frac{p_i}{r_i}}}^{(2 - \gamma_i)\max(1,\frac{r_i}{p_i - r_i})}[\omega_{i}]^{( \gamma_i - 1)\max(1,\frac{r_i}{p_{i}- r_i})}_{A_{\frac{p_{i}}{r_i}}} \|f_{i}\|_{L^{{p_{i}}}(\omega_{i}^{p_i})}^{(\gamma_i -  1)r_i}.
\end{align*}
Collecting our estimates, we have shown for $i \in \tau'$
\begin{align}\label{Max_2}
   \left\|A_{\mathcal{S}^{\prime}, \nu_{i}}^{{\tt a } - 1}(h)\right\|_{L^{\frac{p_i}{r_i}}(u_{i}^{{p_{i}}})}^{\frac{1}{r_i}} \lesssim C(\omega_{i}, \nu_i)^{\frac{1}{r_i}} \|f_{i}\|_{L^{{p_{i}}}(\omega_{i}^{p_i})},
\end{align}
where
\begin{align*}
C(\omega_{i},\nu_i)&=\left(\left[u_{i}^{{p_{i}}}\right]^{\frac{{\tt a} - 2}{2}}_{A_{\frac{p_i}{r_i}}} \left[\mu_{i}\right]^{\frac{{\tt a}}{2}}_{A_{\frac{p_i}{r_i}}} \right)^{\max \left\{1, \frac{1}{\frac{p_i}{r_i}-1}\right\}}\\
   &\quad \times [\omega_{i}]_{A_{\frac{p_{i}}{r_i}}}^{\max(1,\frac{r_i}{p_{i} - r_i})}[\tilde{\omega}_{i}]_{A_{\frac{p_{i}}{r_i}}}^{(2 - \gamma_i)\max(1,\frac{r_i}{p_{i} - r_i})},
\end{align*}
with
\begin{align*}
u_i &= \omega_i \cdot \nu_i^{-\frac{{\tt a} + \gamma_i - 1}{r_i}}, \\
\mu_i &= \omega_i^{p_i} \cdot \nu_i^{-\frac{\gamma_i p_i}{r_i}}, \\
\tilde{\omega}_i &= \omega_i^{p_i} \cdot \nu_i^{-\frac{p_i}{r_i}}.
\end{align*}

Finally, we focus on estimating $\left\| A_{\mathcal{S}^{\prime}, \nu_{0}}^{{\tt L} - 1}({h_1})^{\frac{1}{s'}} \right\|_{L^{q'}(u^{-{q'}})}$. Using a similar approach as in \eqref{mofang}, we get
\begin{align}\label{Max_3}
   \left\|{A}_{\mathcal{S}^{\prime}, \nu_{0}}^{{\tt b } - 1}({h_1})\right\|^{\frac{1}{s'}}_{L^{\frac{q'}{s'}}(u^{-{q'}})} \lesssim C(\omega,\nu_0)^{\frac{1}{s'}} \|g\|_{L^{q'}(\omega^{-q'})}.
\end{align}
where
\begin{align*}
   C_{\tt L}(\omega,\nu_0)&:=\left(\left[v_1^{-{q'}}\right]^{\frac{{\tt L} - 2}{2}}_{A_{-\frac{q'}{s'}}} \left[\mu\right]^{\frac{{\tt L}}{2}}_{A_{-\frac{q'}{s'}}} \right)^{\max \left\{1, \frac{1}{-\frac{q'}{s'}-1}\right\}}\\
   &\quad \times [\omega]_{A_{-\frac{q'}{s'}}}^{\max(1,\frac{s'}{-q' - s'})}[\tilde{\omega}]_{A_{-\frac{q'}{s'}}}^{(2 - \gamma_1)\max(1,\frac{s'}{-q' - s'})}\\
\end{align*}
with
\begin{align*}
v_1 &= \omega^{2\gamma_1 -3} \cdot \nu_0^{\frac{\gamma_1 + {\tt L} -1}{s'}}, \\
\mu_1 &= \omega^{q'(3-2\gamma_1)} \cdot \nu_0^{-\frac{\gamma_1 q'}{s'}}, \\
\tilde{\omega}' &= \omega^{-q'} \cdot \nu_0^{-\frac{q'}{s'}}.
\end{align*}
Combing \eqref{Max_1}, \eqref{Max_2} and \eqref{Max_3}, we have
\begin{align}\label{bbb}
   \TB \lesssim C_{{\tt L}} \prod_{i=1}^{m} \|f_{i}\|_{L^{p_i}(\omega_{i}^{p_i})} \|g\|_{L^{q'}(\omega^{-q'})}
\end{align}
with 
\begin{align}\label{b_M}
   C_{{\tt L}} = &C_{{\tt L},\rm{BMO}} \times [\vec{u}]^{\varTheta}_{(\vec{p},q),(\vec{r}, s)} \times \prod_{i \in \tau'}^{} C(\omega_{i},\nu_i)^{\frac{1}{r_i}} \times  C_{\tt L}(\omega,\nu_0)^{\frac{1}{s'}}.
\end{align}

Following the previous steps, we can obtain the following similar estimates for $\left|\Lambda_{\mici,\boldsymbol{r}_{\tt {\tilde L}}, \S'}( {\vec {\bf A}_{\tt {\tilde L}}})\right|$.
Then we demonstrate that 
\begin{align*}
   \TB &\lesssim C_{{\tt {\tilde L}},\rm{BMO}} \left|\Lambda_{\mici,\boldsymbol{r}_{\tt {\tilde L}}, \S'}( {\vec {\bf A}_{\tt {\tilde L}}})\right|\\
   &\lesssim C_{{\tt {\tilde L}},_{}} \prod_{i=1}^{m} \|f_{i}\|_{L^{p_i}(\omega_{i}^{p_i})}\|g\|_{L^{q'}(\omega^{-q'})},
\end{align*}
where 
\begin{align}\label{c_M}
   C_{{\tt {\tilde L}}} = &C_{{\tt {\tilde L}},\rm{BMO}} \times [\vec{u}]^{\varTheta}_{(\vec{p},q),(\vec{r}, s)} \times \prod_{i \in \tau'}^{} C(\omega_{i},\nu_i)^{\frac{1}{r_i}} \times  C_{\tt {\tilde L}}(\omega,\nu_0)^{\frac{1}{2s'}}.
\end{align}
and
\begin{align*}
   C_{\tt {\tilde L}}(\omega,\nu_0)&:=\left(\left[v_2^{-{q'}}\right]^{\frac{{\tt {\tilde L}} - 2}{2}}_{A_{-\frac{q'}{2s'}}} \left[\mu\right]^{\frac{{\tt {\tilde L}}}{2}}_{A_{-\frac{q'}{2s'}}} \right)^{\max \left\{1, \frac{1}{-\frac{q'}{2s'}-1}\right\}}\\
   &\quad \times [\omega]_{A_{-\frac{q'}{2s'}}}^{\max(1,\frac{2s'}{-q' - s'})}[\tilde{\omega}]_{A_{-\frac{q'}{2s'}}}^{(2 - \gamma_2)\max(1,\frac{2s'}{-q' - s'})}\\
\end{align*}
with
\begin{align*}
v_2&= \omega^{2\gamma_2 -3} \cdot \nu_0^{\frac{\gamma_2 + {\tt {\tilde L}} -1}{2s'}}, \\
\mu_2&= \omega^{q'(3-2\gamma_2)} \cdot \nu_0^{-\frac{\gamma_2 q'}{2s'}}, \\
\tilde{\omega}''&= \omega^{-q'} \cdot \nu_0^{-\frac{q'}{2s'}}.
\end{align*}
\end{proof}

\section{\bf Sharp weighted estimates for higher-order multi-symbol ($m+1$)-linear fractional sparse form}\label{sharp}

In this section, we mainly consider sparse sharp weighted estimation, which means that we need to extend the work of Moen et al. \cite[Theorem 3.2]{Moen2014} and Pérez et al. \cite[Theorem 1.2]{Perez2014} to multilinear fractional sparse forms.

\begin{theorem}\label{thm:sparse-dom_1}
Let $m \in \N$, $\eta \in [0,m)$, $1 \leq r_i ,s' < \infty$ for every $i \in \tau_m$, $1<p_1,\dots,p_m<\infty$, and $\frac{1}{{{q}}}:=\sum\limits_{i = 1}^m {\frac{1}{{{p_i}}}}- \eta \in (0,1)$. Suppose that $\mathbf{t}$ and $\mathbf{k}$ are both multi-indexs with \(\mathbf{t} \leq \mathbf{k}\), and multi-symbols \(\mathbf{b} = (b_1, \ldots, b_{m}) \in (\BMO(X))^m\). If $(\vec{r}, s) \prec (\vec{p},q)$ and $\vec{\omega} \in A_{(\vec{p},q),(\vec{r}, s)}$, then 
	\begin{align}\label{eq:main-est}
\sup\limits_{\mathcal{S} \subseteq \d} \left\|\mathcal{A}_{\eta, \mathcal{S}, \tau, \vec{r}, s'}^{\mathbf{b}, \mathbf{k}, \mathbf{t}}\right\|_{\prod\limits_{j=1}^{m} L^{p_j}\left(\omega_j^{p_j}\right) \times L^{q'}\left(\omega^{-q'}\right) \rightarrow \mathbb{R}}
	&\lesssim_{{\bf k}, {\bf t}, \vec{p},\vec{r},\eta,X} \prod_{i \in \tau} \|b_i\|_{\mathrm{BMO}}^{k_i} \cdot [\vec{\omega}]^{\varTheta}_{(\vec{p},q),(\vec{r}, s)}.
\end{align}
where $\varTheta:=\max \left\{ {\frac{{{p_1}}}{{{p_1} - {r_1}}}, \cdots ,\frac{{{p_m}}}{{{p_m} - {r_m}}},\frac{{q'}}{{q' - s'}}} \right\}$, and this bound is sharp, i.e., $k_i$ and $\varTheta$ cannot be reduced.
\end{theorem}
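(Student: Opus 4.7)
The proof splits naturally into an upper bound and a sharpness assertion. For the upper bound, my approach is a three-step reduction. First, Proposition \ref{reduce} lets me dominate
\[
\mathcal{A}_{\alpha(\eta), \mathcal{S}, \tau, \vec{r}, t}^{\mathbf{b}, \mathbf{k}, \mathbf{t}}(\vec f, g) \leq \sum_{\tau' \subseteq \tau} \mathcal{B}_{\alpha(\eta), \mathcal{S}, \tau, \tau', \vec{r}, t}^{\mathbf{b}, \mathbf{k}}(\vec f, g),
\]
so it suffices to control each reducing sparse form $\mathcal{B}$ separately. Second, I specialize Theorem \ref{quan.main}(1) to the single-weight (non-Bloom) setting by choosing $u_i = \omega_i$ for every $i \in \tau_m$. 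Under this choice each auxiliary weight $\nu_i = (u_i/\omega_i)^{-r_i/({\tt a} + \gamma_i - 1)}$ is identically $1$, so $\mathrm{BMO}_{\nu_i} = \mathrm{BMO}$ (and likewise $\mathrm{BMO}_{\nu_0} = \mathrm{BMO}$), which collapses the constant $\mathcal{C}_{0,1}$ to the target factor $\prod_{i \in \tau} \|b_i\|^{k_i}_{\mathrm{BMO}}$. Moreover $v_1 = \prod_i u_i = \omega$, so the weight on $g$ produced by Theorem \ref{quan.main} is exactly $\omega^{-q'}$.

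Third, I invoke the sharp sparse-form bound \eqref{Nier2.8} from Theorem \ref{Maximal_1} applied to $\Lambda_{\alpha(\eta), \boldsymbol{r}_{\tt L}, \mathcal{S}}$ with the $(m+1)$-tuple $\boldsymbol{r}_{\tt L} = ((r_i)_{i \in \tau_m}, t)$, the convention $p_{m+1} = q'$, and $\omega_{m+1} = \omega^{-1}$. The constraint $\sum_{j=1}^{m+1} 1/p_j = 1 + \alpha(\eta)$ follows from the definition of $q$, and setting $s = t'$ translates the hypothesis $(\vec{r}, s) \prec (\vec{p}, q)$ into the range $\boldsymbol{r} < \boldsymbol{p}$ required by Theorem \ref{Maximal_1}. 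Equation \eqref{Nier2.8} then supplies the bound $[\vec{\omega}]^{\gamma}_{(\vec{p}, q), (\vec{r}, s)}$ with exactly the stated exponent $\gamma = \max_{j=1, \ldots, m+1} \{(1/r_j)/(1/r_j - 1/p_j)\}$. Summing the contributions over the $2^{|\tau|}$ subsets $\tau' \subseteq \tau$ absorbs into the implicit constant and produces \eqref{eq:main-est}.

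For the sharpness claim, the exponent $\gamma$ is already sharp for the background sparse form $\Lambda_{\alpha(\eta), \boldsymbol{r}, \mathcal{S}}$ (this is the final assertion of Theorem \ref{Maximal_1}, established via power-weight test functions $|x|^a$ on $\mathbb{R}^n$ following Moen and Nieraeth; the lift to a general SHT is routine using Proposition \ref{cubeeq}). The sharpness of each $k_i$ is obtained by a standard BMO test: take $b_i(x) = \log|x|$ (or the SHT analogue) so that on a nested family of cubes $Q$ centered near the origin, $|b_i - b_{i,Q}|$ is comparable to $\|b_i\|_{\mathrm{BMO}}$ on a fixed proportion of $Q$; then select $f_i$ and $g$ as indicators supported on these cubes so that the internal factor $|b_i - b_{i,Q}|^{t_i}$ inside $\langle f_i(b_i - b_{i,Q})^{t_i}\rangle_{r_i, Q}$ and the external factor $|b_i - b_{i,Q}|^{k_i - t_i}$ inside the $g$-average jointly extract the full power $\|b_i\|^{k_i}_{\mathrm{BMO}}$. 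The main obstacle is joint sharpness: I must ensure the power-weight example (which drives $\gamma$) and the logarithmic BMO examples (which drive each $k_i$) do not produce offsetting constants. I would address this by a tensor-product construction in $\mathbb{R}^n$, assigning disjoint coordinate directions to the weight and to each $b_i$ so that the weight characteristic and the $|\tau|$ BMO norms saturate independently, and then transferring the example to $X$ via a single dyadic cube obtained from Proposition \ref{cubeeq}.
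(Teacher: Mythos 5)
Your route is genuinely different from the paper's. The paper proves \eqref{eq:main-est} directly: on each sparse cube it applies H\"older's inequality and the John--Nirenberg estimate \eqref{BMOeq._} to strip the factors $|b_i-b_{i,Q}|^{t_i}$ and $\prod_i|b_i-b_{i,Q}|^{k_i-t_i}$, which extracts $\prod_{i\in\tau}\|b_i\|_{\BMO}^{k_i}$ and leaves a plain stochastic fractional sparse form; it then reruns the argument of \eqref{Nier2.8} (the weight condition \eqref{def.of.weighted.condi._2} together with the weighted dyadic maximal operator) to produce $[\vec{\omega}]^{\gamma}_{(\vec p,q),(\vec r,s)}$, never passing through Proposition \ref{reduce} or the Bloom machinery. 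Your plan (Proposition \ref{reduce}, then Theorem \ref{quan.main}(1) specialized to one weight, then \eqref{Nier2.8}) is structurally plausible, but it has a genuine gap at the specialization step.

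Choosing $u_i=\omega_i$ does give $\nu_i\equiv 1$ for $i\in\tau'$, but the weight attached to the $g$-slot does not disappear, and your parenthetical ``likewise $\BMO_{\nu_0}=\BMO$'' is exactly where the argument breaks. With $v_1=\prod_i u_i=\omega$, the paper's own formula $\nu_0=(v_1/\omega^{2\gamma_1-3})^{t/(\gamma_1+{\tt L}-1)}$ yields $\nu_0=\omega^{(4-2\gamma_1)t/(\gamma_1+{\tt L}-1)}$, which is not identically $1$ since $\gamma_1\in[1,2)$; so for $\ell\in\tau\setminus\tau'$ the hypothesis you must verify is $b_\ell\in\BMO_{\nu_0}$ with a nontrivial $\nu_0$, and this is not implied by $b_\ell\in\BMO$ (one would need $\nu_0(B)\gtrsim\mu(B)$ uniformly in $B$). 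If instead you force $\nu_0\equiv 1$ (the reading $\nu_0=\max_\ell\nu_\ell$), then the compatibility relation used in the proof of Theorem \ref{quan.main}, namely $v_1=\omega^{2\gamma_1-3}\nu_0^{(\gamma_1+{\tt L}-1)/t}$, forces $v_1=\omega^{2\gamma_1-3}\neq\omega$, so the $\Lambda$-norm you inherit is against $L^{q'}(v_1^{-q'})$ rather than $L^{q'}(\omega^{-q'})$, and the normalization $\prod_{j=1}^{m+1}$ of the weights equal to $1$ required to invoke \eqref{Nier2.8} fails. Either way $\mathcal{C}_{0,1}$ does not collapse to $\prod_{i\in\tau}\|b_i\|^{k_i}_{\BMO}$, so \eqref{eq:main-est} does not follow from Theorem \ref{quan.main} by the substitution you propose; you would have to redo the cube-by-cube estimate, which is precisely what the paper's direct proof does without any Bloom weights. (There is also the minor point that \eqref{Nier2.8} needs the strict inequality $\boldsymbol r<\boldsymbol p$, while $(\vec r,s)\prec(\vec p,q)$ is non-strict.) Finally, sharpness is part of the statement, and your treatment of it is a plan rather than a proof: sharpness of $\gamma$ for $\Lambda$ alone does not transfer to $\mathcal{A}^{\mathbf b,\mathbf k,\mathbf t}$ unless you exhibit data for which the $b$-factors and the weight characteristic saturate simultaneously, and the tensor-product construction you sketch is not carried out.
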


\begin{proof}

We begin our proof by cliam the following two facts
 \begin{align}\label{xin_1_1}
\Bigl\langle |f_i||b_i - b_{i,Q}|^{t_i} \Bigr\rangle_{r_i,Q}
&\lesssim_{t_i,X} \|b_i\|_{\BMO}^{t_i} \bigl\langle |f_i| \bigr\rangle_{r_{i},Q} ,\\  \label{xin_1_2}
\Bigl\langle \prod_{i\in\tau} |b_i - b_{i,Q}|^{k_i-t_i} |g|\Bigr\rangle_{s',Q} &\lesssim_{k_i,t_i,X} \prod_{i\in\tau}\|b_{i}\|_{\BMO}^{k_i - t_{i}} \langle |g|\rangle_{s',Q}.
\end{align}
In fact, applying the Lemma \ref{xin_1} with $w=|f_i|^{r_i}$, we obtain the desired \eqref{xin_1_1}.

Analogous, fix $i_1, i_2 \in \tau$ with $i_1 \neq i_2$, and the other estimate follows from the fact that
\begin{align*}
    \Bigl\langle \prod_{i\in\tau} |b_i - b_{i,Q}|^{k_i-t_i} |g|\Bigr\rangle_{s',Q} &\lesssim \|b_{i_1}\|_{\BMO}^{t_{i_1}} \Bigl\langle \prod_{{\substack{i \neq i_1\\ i \in \tau }}} |b_i - b_{i,Q}|^{k_i-t_i} |g|\Bigr\rangle_{s',Q} \\
   & \lesssim \|b_{i_1}\|_{\BMO}^{t_{i_1}} \|b_{i_2}\|_{\BMO}^{t_{i_2}} \Bigl\langle \prod_{{\substack{i \neq i_1,i_2\\ i \in \tau }}} |b_i - b_{i,Q}|^{k_i-t_i} |g|\Bigr\rangle_{s',Q}\\
   & \cdots\\
&\lesssim \prod_{i\in\tau}\|b_{i}\|_{\BMO}^{t_{i}} \langle |g|\rangle_{s',Q}.
\end{align*}

Then we proceed the proof by \eqref{xin_1_1} and \eqref{xin_1_2}
\begin{align*}
\bigl|\mathcal{A}_{\eta, \mathcal{S}, \tau, \vec{r}, s'}^{\mathbf{b}, \mathbf{k},\mathbf{t}}(\vec{f},g)\bigr| 
&\leq \sum_{Q \in \mathcal{S}} \mu(Q)^{\eta+1} 
\prod_{i \in \tau} \Bigl\langle |f_i||b_i - b_{i,Q}|^{t_i} \Bigr\rangle_{r_i,Q} \\
&\quad \times \Bigl\langle \prod_{i \in \tau} |b_i - b_{i,Q}|^{(k_i-t_i)} |g| \Bigr\rangle_{s',Q} 
\prod_{j \in \tau^c} \bigl\langle |f_j| \bigr\rangle_{r_j,Q}\\
& \lesssim \prod_{i \in \tau}\left\|b_i\right\|_{\mathrm{BMO}}^{k_i} \sum_{Q \in \mathcal{S}} \mu(Q)^{\eta+1} \left\langle | g | \right\rangle_{s', Q} \prod_{i = 1}^m \left\langle | f_i | \right\rangle_{r_{i}, Q}\\
&= \prod_{i \in \tau}\left\|b_i\right\|_{\mathrm{BMO}}^{k_i} \sum_{Q \in \mathcal{S}} \mu(Q)^{\eta+1} \left\langle  gv^{-\frac{1}{s'}}  \right\rangle_{s', Q}^{v} \left\langle  v  \right\rangle_{1, Q}^{\frac{1}{s'}} \prod_{i =1}^m \left\langle  f_i v_i^{-\frac{1}{r_{i}}} \right\rangle_{r_{i}, Q}^{v_i} \prod_{i = 1}^m\left\langle  v_i  \right\rangle_{1, Q}^{\frac{1}{r_{i}}}\\
\intertext{To apply the $A_{(\vec{p},q),(\vec{r}, s)}$ condition, we set $v_j:=\omega_j^{-\frac{1}{\frac{1}{r_j}-\frac{1}{p_j}}}$ with $i \in \tau_m$,  $f_{m+1} = g$, and $v_{m+1}=\omega^{\frac{1}{\frac{1}{r_{m+1}}-\frac{1}{p_{m+1}}}}$ with $p_{m+1} = q'$ and $r_{m+1} = s'$}
&= \prod_{i \in \tau}\left\|b_i\right\|_{\mathrm{BMO}}^{k_i} \sum_{Q \in \mathcal{S}}\left(\prod_{j=1}^{m+1}\left\langle f_jv_j^{-\frac{1}{r_j}}\right\rangle_{r_j, Q}^{v_j}\left\langle v_j\right\rangle_{1, Q}^{\frac{1}{r_j}}\right)\mu(Q)^{1 + \eta}\\
\intertext{Similar to the proof of \eqref{Nier2.8}, by using \eqref{def.of.weighted.condi._2}}
&\lesssim \prod_{i \in \tau}\left\|b_i\right\|_{\mathrm{BMO}}^{k_i}[\boldsymbol{\omega}]^{\varTheta}_{(\boldsymbol{p},q),(\boldsymbol{r}, \infty)} \sum_{Q \in \mathcal{S}} \prod_{j=1}^{m+1}\left\langle f_j v_j^{-\frac{1}{r_j}}\right\rangle_{r_j, Q}^{v_j} v_j\left(E_Q\right)^{\frac{1}{p_j}}\\
&\leq \prod_{i \in \tau}\left\|b_i\right\|_{\mathrm{BMO}}^{k_i}[\boldsymbol{\omega}]^{\varTheta}_{(\boldsymbol{p},q),(\boldsymbol{r}, \infty)} \sum_{Q \in \mathcal{S}} \prod_{j=1}^{m+1}\left(\int_{E_Q} M_{r_j}^{v_j, \mathscr{D}}\left(f_j v_j^{-\frac{1}{r_j}}\right)^{p_j} v_j \mathrm{~d} x\right)^{\frac{1}{p_j}}\\
& \leq \prod_{i \in \tau}\left\|b_i\right\|_{\mathrm{BMO}}^{k_i}[\boldsymbol{\omega}]^{\varTheta}_{(\boldsymbol{p},q),(\boldsymbol{r}, \infty)} \prod_{j=1}^{m+1}\left\|M_{r_j}^{v_j, \mathscr{D}}\left(f_j v_j^{-\frac{1}{r_j}}\right)\right\|_{L^{p_j}\left(v_j\right)}\\
&\lesssim \prod_{i \in \tau}\left\|b_i\right\|_{\mathrm{BMO}}^{k_i}c^*[\boldsymbol{\omega}]^{\varTheta}_{(\boldsymbol{p},q),(\boldsymbol{r}, \infty)}\prod_{j=1}^{m+1}\left\|f_j\right\|_{L^{p_j}\left(\omega_j^{p_j}\right)}\\
&= \prod_{i \in \tau}\left\|b_i\right\|_{\mathrm{BMO}}^{k_i}c^*[\vec{\omega}]^{\varTheta}_{(\vec{p},q),(\vec{r}, s)}\prod_{j=1}^{m}\left\|f_j\right\|_{L^{p_j}\left(\omega_j^{p_j}\right)}\|g\|_{L^{q'}\left(\omega^{-q'}\right)}.
\end{align*}

From Remark \ref{transform.}, we can obtain the desired.\qedhere
\end{proof}

In the following part, we will give examples to show that our bounds are sharp.

\begin{proof}
We prove that \(k_i\) and \(\gamma\) are sharp by constructing counterexamples showing the norm scales exactly as \(\|b_i\|_{\mathrm{BMO}}^{k_i}\) and \([\boldsymbol{\omega}]^{\varTheta}\), and smaller exponents fail.

 \textbf{Sharpness of \(k_i\):}
	Consider \(m = 1\), \(\tau = \{1\}\), \(X = \mathbb{R}\) (Lebesgue measure), \(\eta = 1\), \(t_1 = 1\), \(k_1 = 2\), \(r_1 = p_1 = 2\), \(t = q' = 2\), \(\mathcal{S} = \{ [0, 2^{-j}) : j \geq 0 \}\), a sparse family. Let:
	\begin{itemize}
		\item \(b_1(x) = \lambda x\), with \(\|b_1\|_{\mathrm{BMO}} \sim \lambda\) (oscillation over intervals),
		\item \(f_1(x) = 1\), \(g(x) = 1\),
		\item \(\omega_1 = \omega = 1\).
	\end{itemize}
	The operator reduces to
	\[
	\mathcal{A}_{1, \mathcal{S}, \{1\}, 2, 2}^{\mathbf{b}, (2), (1)}(f_1, g) = \sum_{Q \in \mathcal{S}} \mu(Q)^2 \left\langle |b_1 - b_{1,Q}| \right\rangle_{2, Q} \left\langle |b_1 - b_{1,Q}| \right\rangle_{2, Q}.
	\]
	For \(Q_j = [0, 2^{-j})\):
	\begin{itemize}
		\item \(\mu(Q_j) = 2^{-j}\),
		\item \(b_{1,Q_j} = \frac{1}{2^{-j}} \int_0^{2^{-j}} \lambda x \, dx = \lambda \cdot 2^{-j-1}\),
		\item \(b_1(x) - b_{1,Q_j} = \lambda (x - 2^{-j-1})\),
		\item \(\left\langle |b_1 - b_{1,Q_j}| \right\rangle_{2, Q_j} = \left( 2^j \int_0^{2^{-j}} |\lambda (x - 2^{-j-1})|^2 \, dx \right)^{1/2} = \lambda \cdot 2^{-j/2} \cdot \frac{1}{2\sqrt{3}}\).
	\end{itemize}
	Thus,
	\[
	\mathcal{A}_{1, \mathcal{S}, \{1\}, 2, 2}^{\mathbf{b}, (2), (1)}(f_1, g) = \sum_{j \geq 0} (2^{-j})^2 \cdot \left( \lambda \cdot 2^{-j/2} \cdot \frac{1}{2\sqrt{3}} \right)^2 = \frac{\lambda^2}{12} \sum_{j \geq 0} 2^{-3j} = \frac{\lambda^2}{12} \cdot \frac{1}{1 - 2^{-3}} = \frac{2\lambda^2}{21}.
	\]
	The norm satisfies \(\left\|\mathcal{A}_{1, \mathcal{S}, \{1\}, 2, 2}^{\mathbf{b}, (2), (1)}\right\| \sim \lambda^2 = \|b_1\|_{\mathrm{BMO}}^2\). If the exponent were \(2 - \epsilon\), the bound \(\lambda^{2-\epsilon}\) grows slower than \(\lambda^2\), and as \(\lambda \to \infty\), the inequality fails, proving \(k_1 = 2\) is sharp.
	
		\begin{figure}[h]
		\centering
		\begin{tikzpicture}
			\begin{axis}[
				xlabel={$\lambda$},
				ylabel={Value},
				domain=0.1:10,
				samples=100,
				legend pos=north west,
				grid=major,
				width=8cm,
				height=6cm,
				ymode=log
				]
				\addplot [blue, thick] {x^2};
				\addlegendentry{$\|\mathcal{A}_{1, \mathcal{S}, \{1\}, 2, 2}^{\mathbf{b}, (2), (1)}\| \sim \lambda^2$}
				\addplot [red, thick] {x^(1.5)};
				\addlegendentry{$\lambda^{2-\epsilon}$}
			\end{axis}
		\end{tikzpicture}
		\caption{Divergence for \(k_i\): Operator norm \(\lambda^2\) vs. reduced bound \(\lambda^{2-\epsilon}\). The logarithmic y-axis shows faster growth of \(\left\|\mathcal{A}_{1, \mathcal{S}, \{1\}, 2, 2}^{\mathbf{b}, (2), (1)}\right\|\).}
		\label{fig:k-diverge}
	\end{figure}

  \textbf{Sharpness of \(\gamma\):}
	Take \(m = 1\), \(\tau = \{1\}\), \(X = \mathbb{R}\), \(\alpha = 0\), \(t_1 = 1\), \(k_1 = 2\), \(p_1 = 4\), \(r_1 = 2\), \(q = s = 4\), \(t = q' = \frac{4}{3}\), \(\mathcal{S} = \{ [0, 2^{-j}) : j \geq 0 \}\), \(b_1(x) = \lambda x\), \(\omega_1 = x^\delta\), \(\omega = x^\delta\). Then we have
	\[
	\varTheta = \frac{\frac{1}{2}}{\frac{1}{2} - \frac{1}{4}} = 2.
	\]
	The operator is back to
	\[
	\mathcal{A}_{0, \mathcal{S}, \{1\}, 2, \frac{4}{3}}^{\mathbf{b}, (2), (1)}(f_1, g) = \sum_{Q \in \mathcal{S}} \mu(Q) \left\langle |f_1 (b_1 - b_{1,Q})| \right\rangle_{2, Q} \left\langle |b_1 - b_{1,Q}| g \right\rangle_{\frac{4}{3}, Q}.
	\]
It follows from letting \(f_1(x) = x^{-1/8} \chi_{[0,1]}(x)\), \(g = f_1\) that 
	\[
	\left\langle |f_1 (b_1 - b_{1,Q_j})| \right\rangle_{2, Q_j} \sim \lambda \cdot 2^{-5j/4}, \quad \left\langle |b_1 - b_{1,Q_j}| g \right\rangle_{\frac{4}{3}, Q_j} \sim \lambda \cdot 2^{-11j/12},
	\]
	\[
	\mathcal{A}_{0, \mathcal{S}, \{1\}, 2, \frac{4}{3}}^{\mathbf{b}, (2), (1)}(f_1, g) \sim \lambda^2 \sum_{j \geq 0} 2^{-25j/12} \approx \lambda^2.
	\]
	The weight characteristic:
	\[
	[\vec{\omega}]_{A_{(\vec{p}, q), (\vec{r}, s)}} \sim \left\langle x^{-\delta} \right\rangle_{4, Q_j} \sim \delta,
	\]
	\[
	\left\|\mathcal{A}_{0, \mathcal{S}, \{1\}, 2, \frac{4}{3}}^{\mathbf{b}, (2), (1)}\right\| \sim \lambda^2 \delta^2 \sim [\vec{\omega}]_{A_{(\vec{p}, q), (\vec{r}, s)}}^\gamma.
	\]
	This means that   to \(\delta^{2-\epsilon} = \delta^2\) fails as \(\delta \to \infty\).
	
	\begin{figure}[h]
		\centering
		\begin{tikzpicture}
			\begin{axis}[
				xlabel={$\delta$},
				ylabel={Value},
				domain=0.1:10,
				samples=100,
				legend pos=north west,
				grid=major,
				width=8cm,
				height=6cm,
				ymode=log
				]
				\addplot [blue, thick] {x^2};
				\addlegendentry{$\|\mathcal{A}_{0, \mathcal{S}, \{1\}, 2, \frac{4}{3}}^{\mathbf{b}, (2), (1)}\| \sim \delta^2$}
				\addplot [red, thick] {x};
				\addlegendentry{$\delta^{2-\epsilon}$}
			\end{axis}
		\end{tikzpicture}
		\caption{Divergence for \(\varTheta = 2\): Operator norm \(\delta^2\) vs. reduced bound \(\delta^{2-\epsilon}\). The log y-axis shows faster growth of \(\left\|\mathcal{A}_{0, \mathcal{S}, \{1\}, 2, \frac{4}{3}}^{\mathbf{b}, (2), (1)}\right\|\).}
		\label{fig:gamma-diverge}
	\end{figure}
\end{proof}

In fact, we also find that the following weighted BMO version of sharp bound. Due to its similar proof technique as mentioned above, we will omit the proof of sharpness here.

Further, we can obtain the weighted version of the sharp estimate by using the fact
\begin{align*}
    \|b\|_{\rm{BMO}} \lesssim \|b\|_{{\rm BMO}_{v}},
\end{align*}
where $v \in A_{\infty}$.
\begin{corollary}\label{thm:sparse-dom_2}
Under the assumption of Theorem \ref{thm:sparse-dom_1}, if $v_i \in A_{\infty}$ with $i \in \tau$, then 
\begin{align}\label{eq:main-est}
\sup\limits_{\mathcal{S} \subseteq \d} \left\|\mathcal{A}_{\eta, \mathcal{S}, \tau, \vec{r}, s'}^{\mathbf{b}, \mathbf{k}, \mathbf{t}}\right\|_{\prod\limits_{j=1}^{m} L^{p_j}\left(\omega_j^{p_j}\right) \times L^{q'}\left(\omega^{-q'}\right) \rightarrow \mathbb{R}}
	&\lesssim_{{\bf k}, {\bf t}, \vec{p},\vec{r},\eta,X} \prod_{i \in \tau} \|b_i\|_{\mathrm{BMO}_{v_i}}^{k_i} \cdot [\vec{\omega}]^{\varTheta}_{(\vec{p},q),(\vec{r}, s)}.
\end{align}
where $\varTheta:=\max \left\{ {\frac{{{p_1}}}{{{p_1} - {r_1}}}, \cdots ,\frac{{{p_m}}}{{{p_m} - {r_m}}},\frac{{q'}}{{q' - s'}}} \right\}$, and this bound is sharp, i.e., $k_i$ and $\varTheta$ cannot be reduced.
\end{corollary}

\section{\bf Application 1: $\mathscr{B}$-valued multilinear fractional singular integral operators}\label{A1}

In this subsection, 
We want to apply our main results to some important multilinear fractional operators and their commutators.
Let us start with some definitions.

Let $m \in \N$, $\eta \in [0,m)$, for every ball $B \subseteq X$, we define the multilinear fractional averaging operator by
$${\A_{\eta,B}}(\vec f)(x): = {\mu(B)^{\eta}}\left( {\prod\limits_{i = 1}^m {{{\left\langle {{f_i}} \right\rangle }_B}} } \right){\chi _B}(x),$$
The multilinear fractional maximal operator $\M_{\eta}$ on the spaces of homogeneous type is defined as
$$
\M_{\eta} (\vec{f})(x) : =\sup _{B \subseteq X}{\A_{\eta,B}}(|f_1|,\cdots,|f_m|)(x).
$$
If $X=\rn$, we take $\eta=\frac{\alpha}{n}$ and denote $\M_{\eta}$ by $\M_\alpha$. If $m=1$, we denote $\M_{\eta}$ by $M_{\eta}$. If $\eta=0$, we deonte $\M_{\eta}$ by $\M$.

\begin{definition}
Let $m \in \N$, $\eta \in [0,m)$, $\B$ be a quasi-Banach space, and $B(\cc,\B)$ be the space of all bounded operators from $\cc$ to $\B$. Set an operator-valued function $Q_{\eta}:(X^{m+1} \backslash \Delta ) \to B(\cc,\B),$ where $\Delta = \{ (x,\vec y) \in X^{m+1} :x = {y_1} =  \cdots  = {y_m}\}$. Suppose that $\T_{\eta}$ is a $\B$-valued $m$-linear fractional singular integral operator. We say $\T_{\eta}$ is a {\tt $\B$-valued multilinear fractional singular integral operator with Dini type kernel}, if for any $\vec f \in {(L_b^\infty(X))^m}$ and each $x \notin \mathop  \cap \limits_{i = 1}^m {supp}{f_i}$,

	\begin{align*}
\T_{\eta}(\vec{f})(x):=\int_{X^m} Q_{\eta}\left(x, \vec y\right) \left(\prod_{j=1}^{m}f_j\left(y_j\right)\right) d\mu (\vec y),
	\end{align*}
	
where Dini type kernel $Q_{\eta}$ satisfies that for 

\begin{itemize}
\item (1) ${\left\| {{Q_{\eta} }\left( {x,\vec y} \right)} \right\|_\B} \lesssim {\left( {\sum\limits_{i = 1}^m {\mu (B(x,d(x,{y_i})))} } \right)^{{\eta}  - m}}$,\\
		
\item (2)  For each $j \in \left\{ {0, \cdots ,m} \right\}$, whenever $d\left( {{{y_j'}},{y_j}} \right) \le \frac{1}{2}\max \left\{ {d\left( {{y_0},{y_i}} \right):i = 1, \cdots ,m} \right\}$,
\begin{align*}
&{\left\| {{Q_{\eta} }\left( {{y_0}, \cdots ,{{y_j'}}, \cdots ,{y_m}} \right) - {Q_{\eta} }\left( {{y_0}, \cdots ,{y_j}, \cdots ,{y_m}} \right)} \right\|_\B}, \\
\lesssim & {\left( {\sum\limits_{i = 1}^m {\mu (B({y_0},d({y_0},{y_i})))} } \right)^{{\eta}  - m}}w\left( {\frac{{d\left( {{{y'}_j},{y_j}} \right)}}{{\sum\limits_{i = 1}^m {d\left( {{y_0},{y_j}} \right)} }}} \right),
\end{align*}
where $w$ is increasing, $w(0)=0$, and ${\left[ w \right]_{Dini}} = \int_0^1 {\frac{{w\left( t \right)}}{t}dt}  < \infty$.
\end{itemize}

If there exist some points $\left\{ {{s_1}, \cdots ,{s_m},{\tilde s}} \right\}$ with ${\eta} : = \sum\limits_{i = 1}^m {\frac{1}{{{s_i}}} - \frac{1}{{\tilde s}}} \in [0,m)$ such that $T_{\eta}$ is bounded from $\prod\limits_{i = 1}^m {{L^{{s_i}}}(X)}$ to ${L^{\tilde s,\infty }}(X,\B)$, then we say that $\T_{\eta}$ is a {\tt $\B$-valued multilinear fractional Dini type Calder\'on-Zygmund operator}.
\end{definition}

It is worth noting that {\tt multilinear fractional integral operator} $\I_{\eta}$ is a special case of {\tt $\B$-valued multilinear fractional Dini type Calder\'on-Zygmund operator}, which is defined by 
\begin{align*}
	{\I_{\eta} }(\vec f)(x) = \int_{{X^m}} {{{\left( {\sum\limits_{i = 1}^m {\mu (B(x,d(x,{y_i})))} } \right)}^{{\eta}  - m}}\prod\limits_{i = 1}^m {{f_i}({y_i})} } d\mu (\vec y),
\end{align*}
where $d\mu (\vec y) = d\mu ({y_1}) \cdots d\mu ({y_m})$.

This fact that 
$${\M_{\eta} }(\vec f)(x) \le {m^{\eta - m}}{\I_\eta }(|\vec f|)(x).$$
is also valid under the space of homogeneous type setting, which follows from that
\[{\I_\eta }(|\vec f|)(x) \ge {m^{\eta  - m}}\mu {(B(x,r))^{\eta  - m}}\int_{d(x,{y_1}) \le r} { \cdots \int_{d(x,{y_m}) \le r} {\prod\limits_{i = 1}^m {|{f_i}({y_i})|} d\mu (\vec y)} }.\]




\begin{definition}
 Let $m \in \N$, $\eta \in [0,m)$, $X$ be a linear space of homogeneous (ensure that linear operations can be accommodated here), and $\Omega \in L^{\beta}\left(S_m\right)(\beta>1)$ be a homogeneous function with zero order. For any $x \in X,$ the multilinear fractional rough maximal operator is defined by
\begin{equation}\label{cchdy}
\mathscr{M}_{\Omega, \eta}(\vec{f})(x)=\sup _{B \subseteq X} {\mu(B)^{m-\eta}} \int_{B^m}\left|\Omega\left(x-y_1, \ldots, x-y_m\right)\right| \prod_{i=1}^m\left|f_i\left(y_i\right)\right| d \vec{y} \cdot \chi_B(x).
\end{equation}
and the multilinear stochastic rough fractional integrals $\mathscr{I}_{\Omega, \eta}$ is defined by
$$
\mathscr{I}_{\Omega, \eta}(\vec{f})(x)=\int_{\left(X\right)^m} \frac{\Omega\left(x-y_1,\cdots,x-y_m\right)}{{\left( {\sum\limits_{i = 1}^m {\mu (B(x,d(x,{y_i})))} } \right)}^{m  - \eta}}\prod_{i=1}^m f_i\left(y_i\right) d \vec{y},
$$

where ${S_m} := \left\{ {y \in {X^m}:d\left( {0,y} \right) = 1} \right\}$.
\end{definition}

The weighted estimates of multilinear stochastic rough fractional integrals $\mathscr{I}_{\Omega, \eta}$ is studied in \cite{Xue2015} under the setting of $\rn$.

Similarly, we have 
$${\M_{\Omega,\eta} }(\vec f)(x) \le {m^{\eta  - m}}{\I_{\Omega,\eta} }(|\vec f|)(x).$$

\begin{theorem}\label{app.2}
Under the same setting of Theorem \ref{SDP}, the {\tt $\B$-valued multilinear fractional Dini type Calder\'on-Zygmund operator} $\T_\eta$,  {\tt multilinear fractional integral operator} $\I_\eta$, {\tt multilinear fractional maximal operator $\M_\eta$}, 
{\tt multilinear fractional maximal operator with rough kernel $\mathscr{M}_{\Omega, \eta}$}, {\tt multilinear fractional integrals with rough kernel $\mathscr{I}_{\Omega, \eta}$}, and their gerneralized commutators can enjoy the  vector-valued multilinear fractional sparse form domination principle: Theorem \ref{SDP}. Moverover, they also enjoy Bloom type weighted estimates: Theorem \ref{quan.main}, and shary type weighted estimates: Theorem \ref{thm:sparse-dom_1} and Theorem \ref{thm:sparse-dom_2}
\end{theorem}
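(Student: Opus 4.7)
The plan is to verify the two structural hypotheses of Theorem \ref{SDP} for each listed base operator $\mathcal{T}_{\alpha(\eta)} \in \{\mathscr{T}_{\alpha(\eta)}, \mathscr{I}_{\alpha(\eta)}, \mathscr{M}_{\alpha(\eta)}, \mathscr{M}_{\Omega,\alpha(\eta)}, \mathscr{I}_{\Omega,\alpha(\eta)}\}$, namely
\begin{equation*}
\mathcal{T}_{\alpha(\eta)} \in W_{\vec{r},\tilde{r}} \qquad \text{and} \qquad \mathscr{M}^{\#}_{\mathcal{T}_{\alpha(\eta)},t',\beta} \in W_{\vec{r},\tilde{r}}, \qquad \tfrac{1}{\tilde{r}} = \sum_{i=1}^m \tfrac{1}{r_i} - \alpha(\eta).
\end{equation*}
Since Theorem \ref{SDP} is already stated at the level of generalized commutators $\mathcal{T}_{\alpha(\eta),\tau}^{\mathbf{b},\mathbf{k}}$, once these two conditions are checked for the base operator the conclusion transfers to \emph{all} higher-order commutators and vector-valued extensions. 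Thus the core of the argument is the two checks above.

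For the first condition, each of the listed operators satisfies a classical multilinear weak-type $(\vec{r},\tilde{r})$ estimate for every admissible $1 \le r_i < \infty$. This is well known for $\mathscr{T}_{\alpha(\eta)}$, $\mathscr{I}_{\alpha(\eta)}$ and $\mathscr{M}_{\alpha(\eta)}$ via Calder\'on--Zygmund decompositions adapted to SHT; for $\mathscr{M}_{\Omega,\alpha(\eta)}$ and $\mathscr{I}_{\Omega,\alpha(\eta)}$ it follows from the $L^\beta(S_m)$ assumption on $\Omega$ as in \cite{Xue2015}. By the second clause of Definition \ref{def:multilinear_W}, weak-type boundedness upgrades to $W_{\vec{r},\tilde{r}}$ with explicit
\begin{equation*}
\Phi_{\mathcal{T}_{\alpha(\eta)},\vec{r},\tilde{r}}(\lambda) = \|\mathcal{T}_{\alpha(\eta)}\|_{L^{r_1}\times \cdots \times L^{r_m} \to L^{\tilde{r},\infty}}\, \lambda^{-1/\tilde{r}}.
\end{equation*}

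For the second condition, the strategy is to produce a pointwise bound of the sharp grand maximal truncation in terms of the multilinear fractional $\vec{r}$-type maximal operator of Section \ref{Maximal.control.}, specifically
\begin{equation*}
\mathscr{M}^{\#}_{\mathcal{T}_{\alpha(\eta)},t',\beta,\B}(\vec{f})(x) \lesssim C_{\ms{K}} \,\mathscr{M}_{\alpha(\eta),\vec{r}}(\vec{f})(x),
\end{equation*}
and then invoke the maximal-operator bounds of Theorem \ref{Maximal_1} to deduce the required $W_{\vec{r},\tilde{r}}$ property. For $\mathscr{T}_{\alpha(\eta)}$ and $\mathscr{I}_{\alpha(\eta)}$, a telescoping decomposition $X \setminus \beta Q = \bigsqcup_{k\ge 0}(\beta^{k+1}Q \setminus \beta^k Q)$ together with the Dini kernel regularity furnishes such a pointwise bound with $C_{\ms{K}} = [\omega]_{\operatorname{Dini}}$. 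For the fractional maximal operators $\mathscr{M}_{\alpha(\eta)}$ and $\mathscr{M}_{\Omega,\alpha(\eta)}$, the oscillation is controlled by the operator itself since it is a supremum of averages, so the $W$ property is essentially immediate. For the rough singular analogue $\mathscr{I}_{\Omega,\alpha(\eta)}$ the kernel has no smoothness, so the tail estimate must be extracted from an $L^s$-H\"ormander condition derived from $\Omega \in L^\beta(S_m)$; here one chooses auxiliary exponents $\vec{s}$ slightly above $\vec{r}$, with $\max_i s_i < t'$, and the required bound follows via H\"older in the oscillation average.

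The main obstacle is the second condition for the rough operators $\mathscr{M}_{\Omega,\alpha(\eta)}$ and $\mathscr{I}_{\Omega,\alpha(\eta)}$, because replacing Dini smoothness by an $L^\beta(S_m)$ assumption forces a careful tracking of the exponent range: one must exploit precisely the hypothesis $\max_{i \in \tau_m}\{r_i\} < t'$ to open room for the auxiliary $L^s$-H\"ormander estimate and to keep $\Phi_{\mathscr{M}^{\#},\vec{r},\tilde{r}}$ finite. Once all the $\Phi$ are recorded (with the BMO-independent structure of $\mathcal{T}_{\alpha(\eta)}$), Theorem \ref{SDP} applies verbatim and yields both sparse-form dominations \eqref{m+1.spa_0}--\eqref{m+1.spa} for every listed operator and its generalized commutators, completing the proof.
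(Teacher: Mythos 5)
Your proposal is correct and follows essentially the same route as the paper: reduce the theorem to verifying $\mathcal{T}_{\alpha(\eta)},\ \M_{\mathcal{T}_{\alpha(\eta)},t',\beta}^{\#} \in W_{\vec{r},\tilde{r}}$, obtain the first from the classical multilinear weak-type bounds via the last clause of Definition \ref{def:multilinear_W}, and the second from $L^r$-H\"ormander-type kernel control of the sharp grand maximal truncation. The only difference is one of detail, not of method: you unpack the annuli-telescoping and H\"older estimates explicitly, whereas the paper delegates exactly this step to the $\mathcal{H}_r$ condition verified in \cite{CenSong2412} and the argument of \cite{Lerner2019}.
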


\begin{proof}
It suffices to verify these operators satisfy "$\mathcal{T}_{\eta}, \M_{\mathcal{T}_{\eta},t',\beta}^{\#} \in W_{\vec{r},\tilde{r}}$ with $\frac{1}{\tilde{r}}:= \sum_{i=1}^m \frac{1}{r_i} - \eta$". 
In fact, this can be broken down into the following points.

\begin{itemize}
\item Firstly, the $\mathcal{T}_{\eta} \in W_{\vec{r},\tilde{r}}$ is implied by the multilinear weak and strong type boundedness. 
\item Secondly, $\mathcal{M}_{\mathcal{T}_{\eta},t',\beta}^{\#} \in W_{\vec{r},\tilde{r}}$, is implied by the kernel $K_{\eta}$ of $\mathcal{T}_{\eta}$ satisfies $\mathcal{H}_r$ ({\tt $\B$-valued $m$-linear $L^r$-Hörmander condition}). The linear version of this idea has already appeared in \cite{Lerner2019} and the multilinear case is similar. In \cite{CenSong2412}, we proved the kernels of the above operators satisfy $\mathcal{H}_r$.
\item Therefore, we can complete the proof through some similar proof processes. \qedhere
\end{itemize}

\end{proof}

\begin{remark}
(1) 
It is worth mentioning that the the parameter $r_i$ can be given in $[1,\infty)$ arbitrarily for {\tt $\B$-valued multilinear fractional Dini type Calder\'on-Zygmund operator}, since the multilinear fractional  Dini type Calder\'on-Zygmund theory has been set up in \cite{ZhangWu2023}.\\
(2) If we take $\B$ as a specific $L^2(\Omega,dv)$, such as $L^2(\R_ + ^{n + 1},\frac{{dzdt}}{{{t^{n + 1}}}})$,  then we can obtain the new multilinear fractional Littlewood-Paley sparse domination theory, which straightway extend the results by Cao et al. in \cite{Cao2018}. So how to select these $L^2(\Omega,dv)$? Ones may refer to \cite{Cao2018, Si2021, Xue2021,Xue2015}.
\end{remark}

\begin{remark}
In fact, we can establish the above rough operators in more general homogeneous groups, such as Heisenberg groups and Carnot groups. Because they are closed to group operations, we can also obtain similar conclusions.
\end{remark}

Theorem \ref{app.2} is relatively abstract. To facilitate understanding of the powerful applications of the sparse theory in this article, we provide the following multilinear vector-valued inequalities for the generalized commutators of multilinear rough fractional integral operators. The first result derives from Theorem \ref{SDP}, Theorem \ref{thm:sparse-dom_1}, and Theorem \ref{app.2}.
\begin{corollary}
   Let $m \in \N$, $\eta \in [0,m)$, $1 \leq r_i, s' < \infty$, and $\max\limits_{i}\{r_i\} < s$, for every $i \in \tau_m$. Suppose that $\mathbf{k}$ is a multi-index and multi-symbols \(\mathbf{b} = (b_1, \ldots, b_{m}) \in (\BMO(X))^m\). Set $\Omega \in L^{\beta}\left(S_m\right)$ is a homogeneous function with zero order, for some $\beta \in (1,\infty]$. Set $1<p_1,\dots,p_m<\infty$, and $\frac{1}{{{q}}}:=\sum\limits_{i = 1}^m {\frac{1}{{{p_i}}}}- \eta \in (0,1)$.
   If $(\vec{r}, s) \prec (\vec{p},q)$ and $\vec{\omega} \in A_{(\vec{p},q),(\vec{r}, s)}$, then
\begin{align*}
&\quad\left\|\left(\sum_{j_1=1}^{N_1}\cdots\sum_{j_m=1}^{N_m}|\mathscr{I}^{\bf b,k}_{\Omega, \eta,\tau}\left(f_{1,j_1},\ldots,f_{m,j_m}\right)|^z\right)^{\frac{1}{z}}\right\|_{L^q(\omega^q)} \\
&\lesssim_{{\bf k}, {\bf t}, \vec{p},\vec{r},\eta,X} \prod_{i \in \tau} \|b_i\|_{\mathrm{BMO}}^{k_i} \cdot [\vec{\omega}]^{\varTheta}_{(\vec{p},q),(\vec{r}, s)} \left(\prod_{i = 1}^m \left\| \left( \sum_{j_i=1}^{N_i} |f_{i,j_i}|^z\right)^{\frac{1}{z}} \right\|_{L^{p_i}(\omega_i^{p_i})}\right),
\end{align*}
where $\varTheta:=\max \left\{ {\frac{{{p_1}}}{{{p_1} - {r_1}}}, \cdots ,\frac{{{p_m}}}{{{p_m} - {r_m}}},\frac{{q'}}{{q' - s'}}} \right\}$, and this bound is sharp, i.e., $k_i$ and $\varTheta$ cannot be reduced.
    
\end{corollary}

The following weighted BMO type sharp weighted estimate can be obtained by Theorem \ref{SDP}, Theorem \ref{thm:sparse-dom_2} and Theorem \ref{app.2}.

\begin{corollary}

Let $m \in \N$, $\eta \in [0,m)$, $1 \leq r_i, s' < \infty$, and $\max\limits_{i}\{r_i\} < s$, for every $i \in \tau_m$. Let $1<p_1,\cdots,p_m<\infty$,and $\frac{1}{{{q}}}:=\sum\limits_{i = 1}^m {\frac{1}{{{p_i}}}}- \eta \in (0,1)$. Suppose that $\mathbf{t}$ and $\mathbf{k}$ are both multi-indexs with \(\mathbf{t} \leq \mathbf{k}\) and multi-symbols \(\mathbf{b} = (b_1, \ldots, b_{m}) \in (\BMO_{\nu_i}(X))^m\). Set $\Omega \in L^{\beta}\left(S_m\right)$ is a homogeneous function with zero order, for some $\beta \in (1,\infty]$. If $(\vec{r}, s) \prec (\vec{p},q)$, $\vec{\omega} \in A_{(\vec{p},q),(\vec{r}, s)}$, and $\nu_i \in A_\infty$, for every $i \in \tau$, then 

\begin{align*}
&\quad\left\|\left(\sum_{j_1=1}^{N_1}\cdots\sum_{j_m=1}^{N_m}|\mathscr{I}^{\bf b,k}_{\Omega, \eta,\tau}\left(f_{1,j_1},\ldots,f_{m,j_m}\right)|^z\right)^{\frac{1}{z}}\right\|_{L^q(\omega^q)} \\
&\lesssim_{{\bf k}, {\bf t}, \vec{p},\vec{r},\eta,X}   \prod_{i \in \tau} \|b_i\|_{\mathrm{BMO}_{\nu_i}}^{k_i} \cdot [\vec{\omega}]^{\varTheta}_{(\vec{p},q),(\vec{r}, s)} \left(\prod_{i = 1}^m \left\| \left( \sum_{j_i=1}^{N_i} |f_{i,j_i}|^z\right)^{\frac{1}{z}} \right\|_{L^{p_i}(\omega_i^{p_i})}\right),
\end{align*}
where $\varTheta:=\max \left\{ {\frac{{{p_1}}}{{{p_1} - {r_1}}}, \cdots ,\frac{{{p_m}}}{{{p_m} - {r_m}}},\frac{{q'}}{{q' - s'}}} \right\}$, and this bound is sharp, i.e., $k_i$ and $\varTheta$ cannot be reduced.
    
\end{corollary}

The following vector-valued Bloom type weighted estimate for the generalized commutators of multilinear rough fractional integral operators follows instantly from Theorem \ref{SDP},  Theorem \ref{app.2} and Remark \ref{Max.method}, where some parameter settings can refer to Theorem \ref{quan.main} and Remark \ref{Max.method}.

\begin{corollary}
Let $m \in \N$, $\eta \in [0,m)$, and $\mathcal{S} \subseteq \d$ be a sparse family. Let $i \in \tau_m$, $j \in \tau$, and $k \in \tau'$, with $\tau' \subseteq \tau \subseteq \tau_m$, and $\mathbf{k}$ be a multi-index.
Let $r_i, t \in [1, \infty)$, $p_i,q\in (1, \infty)$ with $\frac{1}{q}:= \sum\limits_{j=1}^m\frac{1}{p_j} -  \eta \in (0,1)$, and $\max\limits_{i}\{r_i\} < s$. Given $\omega_i$, $\nu_j$, and $u_i$ are  weights. Set $\nu_k=(u_k /\omega_k)^{-\frac{r_k}{{\tt a} + \gamma_k - 1}}$ with ${\tt a}:=\lfloor k_kr_k\rfloor$, $\gamma_k:={k_kr_k}-({\tt a}-1) \in[1,2)$. Set $\Omega \in L^{\beta}\left(S_m\right)$ is a homogeneous function with zero order, for some $\beta \in (1,\infty]$.
   \begin{enumerate}
\item Suppose that $\vec u \in A_{(\vec{ p},q),(\vec{r},s)}(X)$. If $b_k \in {\rm BMO}_{\nu_k}$ and $b_{\ell} \in {\rm BMO}_{\nu_0}$ with $\ell \in \tau \backslash \tau'$, where $\nu_0 := \max\limits_{j} \{\nu_{\ell}\}$. Define $v_1=\prod\limits_{i=1}^{m} u_i$ and $\nu_0:=(v_1/\omega^{2\gamma_1 - 3})^{\frac{s'}{\gamma_1 + {\tt L} - 1}}$, where ${\tt L} := \lfloor t \cdot \sum\limits_{\ell} k_{\ell} \rfloor$ and $\gamma_1 := s' \cdot \sum\limits_{\ell} k_{\ell} - ({\tt L} - 1) \in [1, 2)$. Then
\begin{align*}
&\left\|\left(\sum_{j_1=1}^{N_1}\cdots\sum_{j_m=1}^{N_m}|\mathscr{I}^{\bf b,k}_{\Omega, \eta,\tau}\left(f_{1,j_1},\ldots,f_{m,j_m}\right)|^z\right)^{\frac{1}{z}}\right\|_{L^q(\omega^q)} \lesssim  \mathcal{C}_{1}\left(\prod_{i = 1}^m \left\| \left( \sum_{j_i=1}^{N_i} |f_{i,j_i}|^z\right)^{\frac{1}{z}} \right\|_{L^{p_i}(\omega_i^{p_i})}\right),
\end{align*}
   where
\begin{align*}
   \mathcal{C}_{1} = &\prod_{i \in \tau'}\|b_{i}\|^{k_{i}}_{{\rm{BMO}}_{\nu_{i}}} \prod_{j \in \tau \backslash \tau'}\|b_{j}\|^{k_{j}}_{{\rm BMO}_{\nu_0}} \times [\vec{u}]^{\varTheta}_{(\vec{p},q),(\vec{r}, s)} \\
   &\times \prod_{i \in \tau'}^{} \left(\mathcal{C}_{u_i,\omega_i,\nu_i}\left({\tt a},\frac{p_i}{r_i},p_i,p_i,-\frac{\gamma_i p_i}{r_i}\right)\right)^{\frac{1}{r_i}} \times  \left(\mathcal{C}_{v_1,\omega,\nu_0}\left({\tt L},-\frac{q'}{s'},-q',q'(3-2\gamma_1),-\frac{\gamma_1 q'}{s'}\right)\right)^{\frac{1}{s'}}.
\end{align*}

\item Suppose that $\vec u \in A_{(\vec{ p},q),(\vec{r},s)}(X)$. If $b_{i_0} \in {\rm BMO}_{\nu_{i_0}}$ for some $i_0 \in \tau \backslash \tau'$, and $b_{\ell} \in {\rm BMO}$ for $\ell \in (\tau \backslash \tau') \backslash \{i_0\}$. Define $v_2=\prod\limits_{i=1}^{m} u_i$ and ${\nu _{{i_0}}}: = {\left( {{{{v_2}}}/{{{\omega ^{2{\gamma _2} - 3}}}}} \right)^{\frac{{2s'}}{{{\gamma _2} + {\tt {\tilde L}} - 1}}}}$, where ${\tt {\tilde L}} := \lfloor 2k_{i_0}t \rfloor$ and $\gamma_2 := 2k_{i_0}t - ({\tt {\tilde L}} - 1) \in [1, 2)$.
 Then 
\begin{align*}
&\left\|\left(\sum_{j_1=1}^{N_1}\cdots\sum_{j_m=1}^{N_m}|\mathscr{I}^{\bf b,k}_{\Omega, \eta,\tau}\left(f_{1,j_1},\ldots,f_{m,j_m}\right)|^z\right)^{\frac{1}{z}}\right\|_{L^q(\omega^q)} \lesssim  \mathcal{C}_{2}\left(\prod_{i = 1}^m \left\| \left( \sum_{j_i=1}^{N_i} |f_{i,j_i}|^z\right)^{\frac{1}{z}} \right\|_{L^{p_i}(\omega_i^{p_i})}\right),
\end{align*}
where
\begin{align*}
   \mathcal{C}_{2} = & \|b_{i_0}\|^{k_{i_0}}_{{\rm{BMO}}_{\nu_{i_0}}}\prod_{i \in \tau'}\|b_{i}\|^{k_{i}}_{{\rm{BMO}}_{\nu_{i}}}\prod_{\substack{j \in \tau \backslash \tau'\\ j \neq i_0}}\|b_{j}\|_{\rm{BMO}}^{k_{j}} \times [\vec{u}]^{\varTheta}_{(\vec{p},q),(\vec{r}, s)} \\
   &\times \prod_{i \in \tau'}^{} \left(\mathcal{C}_{u_i,\omega_i,\nu_i}\left({\tt a},\frac{p_i}{r_i},p_i,p_i,-\frac{\gamma_i p_i}{r_i}\right)\right)^{\frac{1}{r_i}} \times  \left(\mathcal{C}_{v_2,\omega,\nu_{i_0}}\left({\tt {\tilde L}},-\frac{q'}{2s'},-q',q'(3-2\gamma_2),-\frac{\gamma_2 q'}{2s'}\right)\right)^{\frac{1}{t}}.
\end{align*}
   \end{enumerate}
\end{corollary}

\section{\bf Application 2: Fractional Laplacian equation associated with generalized commutators}\label{A2}

In this section, we are interesting in regularity estimates for solutions to the following fractional Laplacian equation associated with generalized commutators.

Given some non-negative functions $f_1,\dots,f_m \in L_c^{\infty}\left(\rn\right)$, for any $x \in \rn$, we consider
\begin{equation}\label{eq:nloc-eq}
(-\Delta)^{\eta/2}u(x) = T_{\eta,\tau}^{\bf b, k}({|f_1|,\cdots,|f_m|})(x),
\end{equation}
where $T_{\eta,\tau}^{{\bf b, k}}$ is a generalized commutator generalized by a complex-valued $m$-sublinear fractional operator $T_{\eta}$.

We consider weighted Lebesgue regularity of solutions of \eqref{eq:nloc-eq} via multilinear fractional sparse bounds as follows.

\begin{theorem}\label{A21}
Let $m \in \N$, $\eta \in [0,m)$, $1 \leq r_i, s' < \infty$ for every $i \in \tau_m$, and multi-symbols \(\mathbf{b} = (b_1, \ldots, b_{m}) \in (L_{loc}^1(X))^m\). 
Suppose that $\mathbf{t}$ and $\mathbf{k}$ are both multi-indexs with \(\mathbf{t} \leq \mathbf{k}\). Assume that $T_{\eta,\tau}^{{\bf b, k}}$ enjoys \eqref{m+1.spa}. If $\frac{1}{{\tilde q}} := \frac{1}{q}-\eta \in (0,1)$ and $\omega$ is a weight, then there exist unique solution
$u\in {{L^{\tilde q}}({\omega ^{\tilde q}})}$ of \eqref{eq:nloc-eq} and a sparse family \(\mathcal{S}\) such that 
\begin{align*}
{\left\| u \right\|_{{L^{\tilde q}}({\omega ^{\tilde q}})}} 
\lesssim \mathop {\sup }\limits_{{{\left\| g \right\|}_{{L^{q'}}\left( {{\omega ^{ - q'}}} \right)}} = 1} \mathcal{A}_{\eta, \mathcal{S}, \tau, \vec{r}, s'}^{\mathbf{b}, \mathbf{k},\mathbf{t}} (\vec{f},g)
\lesssim \mathop {\sup }\limits_{{{\left\| g \right\|}_{{L^{q'}}\left( {{\omega ^{ - q'}}} \right)}} = 1} 
\sum\limits_{\tau  \subseteq \tau '} {\mathcal{B}_{\eta, \mathcal{S}, \tau,\tau', \vec{r}, t}^{\mathbf{b}, \mathbf{k}} (\vec{f},g)}.
\end{align*}
\end{theorem}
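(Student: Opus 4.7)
\textbf{Proof proposal for Theorem \ref{A21}.}

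The plan is to realize the solution via the Riesz potential inversion of the fractional Laplacian, and then dualize so that the sparse form assumption kicks in directly. Concretely, set $F := T_{\alpha(\eta),\tau}^{\mathbf{b},\mathbf{k}}(|f_1|,\ldots,|f_m|)$, which is a locally integrable function since each $f_i \in L_c^\infty(\mathbb{R}^n)$. Define
\begin{align*}
u(x) := I_{\alpha(\eta)}(F)(x) = c_{n,\alpha(\eta)}\int_{\mathbb{R}^n} \frac{F(y)}{|x-y|^{n-\alpha(\eta)}}\,dy,
\end{align*}
where $I_{\alpha(\eta)}$ is the Riesz potential of order $\alpha(\eta)$. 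Then $(-\Delta)^{\alpha(\eta)/2} u = F$, giving existence. Uniqueness in the class $L^{\tilde q}(\omega^{\tilde q})$ follows from a Liouville-type argument: any two solutions differ by an $\alpha(\eta)/2$-harmonic tempered distribution that lies in $L^{\tilde q}(\omega^{\tilde q})$, which must be zero.

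The main step is the norm estimate. I would use duality in a weighted form. Note the scaling relation
\begin{align*}
\frac{1}{\tilde q'} - \alpha(\eta) = 1-\frac{1}{\tilde q} - \alpha(\eta) = 1 - \frac{1}{q} = \frac{1}{q'},
\end{align*}
so $I_{\alpha(\eta)}$ is the right operator to exchange $L^{\tilde q'}$-scale and $L^{q'}$-scale. Invoking the (weighted) Stein--Weiss / Muckenhoupt--Wheeden two-weight bound for the Riesz potential --- valid under the assumption on $\omega$ implicit in the theorem --- together with self-adjointness of $I_{\alpha(\eta)}$ gives
\begin{align*}
\|u\|_{L^{\tilde q}(\omega^{\tilde q})}
= \sup_{\|h\|_{L^{\tilde q'}(\omega^{-\tilde q'})}=1} |\langle I_{\alpha(\eta)}F, h\rangle|
= \sup_{\|h\|_{L^{\tilde q'}(\omega^{-\tilde q'})}=1} |\langle F, I_{\alpha(\eta)}h\rangle|.
\end{align*}
Setting $g := I_{\alpha(\eta)}h$, the weighted Riesz potential bound yields $\|g\|_{L^{q'}(\omega^{-q'})} \lesssim \|h\|_{L^{\tilde q'}(\omega^{-\tilde q'})} = 1$, so
\begin{align*}
\|u\|_{L^{\tilde q}(\omega^{\tilde q})} \lesssim \sup_{\|g\|_{L^{q'}(\omega^{-q'})}=1} |\langle T_{\alpha(\eta),\tau}^{\mathbf{b},\mathbf{k}}(|f_1|,\ldots,|f_m|), g\rangle|.
\end{align*}

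Now the hypothesis that $T_{\alpha(\eta),\tau}^{\mathbf{b},\mathbf{k}}$ enjoys \eqref{m+1.spa} (in its scalar, single-function form, $Y_i=Y=\mathbb{C}$, $z=1$, $N_i=1$) can be applied to the pairing $\langle T_{\alpha(\eta),\tau}^{\mathbf{b},\mathbf{k}}(\vec{|f|}),g\rangle$, producing a sparse family $\mathcal{S}$ (depending on $\vec f$ and $g$) with
\begin{align*}
|\langle T_{\alpha(\eta),\tau}^{\mathbf{b},\mathbf{k}}(\vec{|f|}), g\rangle|
\lesssim \sum_{\mathbf{t}_\tau} \mathcal{A}_{\alpha(\eta),\mathcal{S},\tau,\vec r,t}^{\mathbf{b},\mathbf{k},\mathbf{t}}(\vec f,g),
\end{align*}
which gives the first claimed inequality. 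The second inequality in the theorem then follows by directly quoting Proposition \ref{reduce}, which says $\mathcal{A}_{\alpha(\eta),\mathcal{S},\tau,\vec r,t}^{\mathbf{b},\mathbf{k},\mathbf{t}}(\vec f,g) \le \sum_{\tau'\subseteq\tau} \mathcal{B}_{\alpha(\eta),\mathcal{S},\tau,\tau',\vec r,t}^{\mathbf{b},\mathbf{k}}(\vec f,g)$.

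The main obstacle I expect is the weighted mapping property of $I_{\alpha(\eta)}$ in step two: one needs $\omega$ to satisfy an $A_{q,\tilde q}$-type Muckenhoupt--Wheeden condition in order to push $I_{\alpha(\eta)}$ from $L^{\tilde q'}(\omega^{-\tilde q'})$ into $L^{q'}(\omega^{-q'})$ with a controlled constant. In the context of this paper, this is natural under the $A_{(\vec p,q),(\vec r,s)}$ framework introduced in Section \ref{Maximal.control.}, but one must check compatibility of the exponent arithmetic (namely $\frac{1}{\tilde q'}-\alpha(\eta)=\frac{1}{q'}$) with the weight normalization; a minor technicality is handling the case where $F$ fails to be non-negative so as to legitimize the duality step via a density/truncation argument on $f_i \in L_c^\infty$.
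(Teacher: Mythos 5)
Your proposal is correct and lands on the same skeleton as the paper -- reduce to the pairing $\langle T_{\alpha(\eta),\tau}^{\mathbf{b},\mathbf{k}}(\vec f),g\rangle$ with $g$ ranging over the unit ball of $L^{q'}(\omega^{-q'})$, apply the sparse hypothesis \eqref{m+1.spa}, and finish with Proposition \ref{reduce} -- but you organize the key analytic step dually. The paper first applies a weighted fractional Gagliardo--Nirenberg--Sobolev embedding to $u$ itself, namely $\|u\|_{L^{\tilde q}(\omega^{\tilde q})}\lesssim\|(-\Delta)^{\alpha(\eta)/2}u\|_{L^q(\omega^q)}$, and then dualizes in $L^q(\omega^q)$; you instead dualize in $L^{\tilde q}(\omega^{\tilde q})$ and move $I_{\alpha(\eta)}$ onto the test function $h$, which requires the two-weight Riesz potential bound $I_{\alpha(\eta)}:L^{\tilde q'}(\omega^{-\tilde q'})\to L^{q'}(\omega^{-q'})$ -- precisely the dual formulation of the embedding the paper uses, as your exponent computation $\frac{1}{\tilde q'}-\alpha(\eta)=\frac{1}{q'}$ confirms. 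Neither version is available for a completely arbitrary weight: the paper writes $\lesssim_{n,\alpha(\eta),q}$ and quietly assumes the weighted GNS inequality, while you make the needed Muckenhoupt--Wheeden-type hypothesis on $\omega$ explicit, which is the more honest bookkeeping (though it means your statement, like the paper's, really carries an implicit restriction beyond ``$\omega$ is a weight''). Two further points in your favor: your explicit construction $u=I_{\alpha(\eta)}\bigl(T_{\alpha(\eta),\tau}^{\mathbf{b},\mathbf{k}}(\vec{|f|})\bigr)$ actually supplies the existence claim, which the paper's proof leaves implicit, and your Liouville-type uniqueness argument is the same as the paper's Fourier-transform argument (in either form one should note that $\widehat v$ supported at the origin forces $v$ to be a polynomial, which the membership in $L^{\tilde q}(\omega^{\tilde q})$ then kills). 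One small caveat: when you apply \eqref{m+1.spa} you should keep the sum $\sum_{\mathbf{t}_\tau}$ over the oscillation indices produced by Theorem \ref{SDP} (or absorb it into the implicit constant, as the paper does), rather than a single form $\mathcal{A}_{\alpha(\eta),\mathcal S,\tau,\vec r,t}^{\mathbf{b},\mathbf{k},\mathbf{t}}$; this is cosmetic and does not affect the conclusion.
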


\begin{proof}
By fractional Gagliardo-Nirenberg-Sobolev inequality, Theorem \ref{SDP} and Proposition \ref{reduce},
\begin{align*}
\left\| u \right\|_{L^{\tilde q}(\omega^{\tilde q})} &\lesssim_{n, \eta, q} \left\| (-\Delta)^{\eta/2} u \right\|_{L^q(\omega^q)} \\
&= \mathop{\sup}\limits_{\| g \|_{L^{q'}(\omega^{-q'})} = 1} \left| \left\langle {\mathcal{T}_{\eta, \tau}^{\mathbf{b}, \mathbf{k}} (\vec{f}), g} \right\rangle \right| \\
&\lesssim \mathop{\sup}\limits_{\| g \|_{L^{q'}(\omega^{-q'})} = 1} \mathcal{A}_{\eta, \mathcal{S}, \tau, \vec{r}, s'}^{\mathbf{b}, \mathbf{k}, \mathbf{t}} (\vec{f}, g) \\
&\lesssim \mathop{\sup}\limits_{\| g \|_{L^{q'}(\omega^{-q'})} = 1} \sum_{\tau \subseteq \tau'} \mathcal{B}_{\eta, \mathcal{S}, \tau,\tau', \vec{r}, t}^{\mathbf{b}, \mathbf{k}, \mathbf{t}} (\vec{f}, g).
\end{align*}

Uniqueness: 

If $u_2$, $u_2$ both are solutions, then $v:=u_2-u_2$ satisfies $(-\Delta)^{\eta/2}v=0.$ 
It follows from the property of Fourier transform that $\mathcal{F}(v)=0.$ Due to $v \in \mathscr{S}'$, then $v=0$ a.e..
\end{proof}

\begin{definition}
Let $ s \in (0,1) $, $ p \in (1,\infty) $, and $\omega$ is a weight. For $u \in \mathscr{S}'(\mathbb{R}^n) $, we define the weighted Sobolev space $W^{s,p}(\omega) $
\begin{align*}
\|u\|_{W^{s,p}(\omega)} &:= \underbrace{\left(\int_{\mathbb{R}^n} |u(x)|^p\omega(x)dx\right)^{1/p}}_{\|u\|_{L^p(\omega)}} + \underbrace{\left(\iint_{\mathbb{R}^n \times \mathbb{R}^n} \frac{|u(x)-u(y)|^p}{|x-y|^{n+sp}}\omega(x)dxdy\right)^{1/p}}_{[u]_{W^{s,p}(\omega)}},
\end{align*}

and the Gagliardo seminorm
\begin{align*}
[u]_{W^{s,p}(\omega)}:=\left(\iint_{\mathbb{R}^n \times \mathbb{R}^n} \frac{|u(x)-u(y)|^p}{|x-y|^{n+sp}}\omega(x)dxdy\right)^{1/p}.
\end{align*}

For the fractional Laplacian, we also use the equivalent characterization via the fractional derivative: 
If $\omega \in A_p$, we have
\begin{align}
 \label{cha.W}
[u]_{W^{s,p}(\omega)} \approx_{p,s,[\omega]_{A_p}}{\|(-\Delta)^{s/2}u\|_{L^p(\omega)}}.
\end{align}
\end{definition}

Finally, we consider weighted Sobolev regularity of solutions of \eqref{eq:nloc-eq} via multilinear fractional sparse bounds as follows.

\begin{theorem}\label{A22}
Let $m \in \N$, $\eta \in [0,m)$, $1 \leq r_i, s' < \infty$ for every $i \in \tau_m$, and multi-symbols $\mathbf{b} = (b_1, \ldots, b_{m}) \in (L_{loc}^1(X))^m$. 
Suppose that $\mathbf{t}$ and $\mathbf{k}$ are both multi-indexs with $\mathbf{t} \leq \mathbf{k}$. Assume that $T_{\eta,\tau}^{{\bf b, k}}$ enjoys \eqref{m+1.spa}. If $\frac{1}{q} := \frac{1}{p}-\eta \in (0,1)$ and $\omega^q \in A_q$, then there exist unique solution
$u\in {{W^{{\eta},q}}({\omega ^{q}})}$ of \eqref{eq:nloc-eq} and a sparse family \(\mathcal{S}\) such that 
\begin{align*}
\|u\|_{W^{{\eta},q}(\omega^q)} 
\lesssim& \mathop {\sup }\limits_{{{\left\| g \right\|}_{{L^{p'}}\left( {{\omega ^{ - p'}}} \right)}} = 1} \mathcal{A}_{\eta, \mathcal{S}, \tau, \vec{r}, s'}^{\mathbf{b}, \mathbf{k},\mathbf{t}} (\vec{f},g) 
+
\mathop {\sup }\limits_{{{\left\| g \right\|}_{{L^{q'}}\left( {{\omega ^{ - q'}}} \right)}} = 1} \mathcal{A}_{\eta, \mathcal{S}, \tau, \vec{r}, s'}^{\mathbf{b}, \mathbf{k},\mathbf{t}} (\vec{f},g) \\
\lesssim& \mathop {\sup }\limits_{{{\left\| g \right\|}_{{L^{p'}}\left( {{\omega ^{ - p'}}} \right)}} = 1} 
\sum\limits_{\tau  \subseteq \tau '} {\mathcal{B}_{\eta, \mathcal{S}, \tau,\tau', \vec{r}, t}^{\mathbf{b}, \mathbf{k}} (\vec{f},g)}
+
\mathop {\sup }\limits_{{{\left\| g \right\|}_{{L^{q'}}\left( {{\omega ^{ - q'}}} \right)}} = 1} 
\sum\limits_{\tau  \subseteq \tau '} {\mathcal{B}_{\eta, \mathcal{S}, \tau,\tau', \vec{r}, t}^{\mathbf{b}, \mathbf{k}} (\vec{f},g)}.
\end{align*}
\end{theorem}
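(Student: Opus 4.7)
The plan mirrors the strategy of Theorem \ref{A21}, but now splits the Sobolev norm into two contributions and applies sparse domination to each piece separately. By the definition of the weighted Sobolev norm,
\[
\|u\|_{W^{\alpha(\eta),q}(\omega^q)} = \|u\|_{L^q(\omega^q)} + [u]_{W^{\alpha(\eta),q}(\omega^q)}.
\]
Since $\omega^q \in A_q$, the characterization \eqref{cha.W} gives
\[
[u]_{W^{\alpha(\eta),q}(\omega^q)} \approx \|(-\Delta)^{\alpha(\eta)/2}u\|_{L^q(\omega^q)} = \|T^{\mathbf{b},\mathbf{k}}_{\alpha(\eta),\tau}(\vec{f})\|_{L^q(\omega^q)},
\]
because $u$ solves \eqref{eq:nloc-eq}. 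For the $L^q$ piece, one invokes the fractional Gagliardo--Nirenberg--Sobolev embedding (in the weighted form guaranteed by $\omega^q \in A_q$ together with the exponent relation $\tfrac{1}{q}=\tfrac{1}{p}-\alpha(\eta)$, exactly as used in the proof of Theorem \ref{A21}) to obtain
\[
\|u\|_{L^q(\omega^q)} \lesssim \|(-\Delta)^{\alpha(\eta)/2}u\|_{L^p(\omega^p)} = \|T^{\mathbf{b},\mathbf{k}}_{\alpha(\eta),\tau}(\vec{f})\|_{L^p(\omega^p)}.
\]

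Next, each of the resulting weighted Lebesgue norms is linearized through duality, producing
\[
\|T^{\mathbf{b},\mathbf{k}}_{\alpha(\eta),\tau}(\vec{f})\|_{L^q(\omega^q)} = \sup_{\|g\|_{L^{q'}(\omega^{-q'})}=1} \big|\langle T^{\mathbf{b},\mathbf{k}}_{\alpha(\eta),\tau}(\vec{f}), g\rangle\big|,
\]
and analogously with $(p,p')$ in place of $(q,q')$. Applying the sparse domination principle \eqref{m+1.spa} of Theorem \ref{SDP}, together with the reduction in Proposition \ref{reduce}, converts each of these dual pairings into the asserted higher-order multi-symbol sparse forms $\mathcal{A}^{\mathbf{b},\mathbf{k},\mathbf{t}}_{\alpha(\eta),\mathcal{S},\tau,\vec{r},t}(\vec{f},g)$ and their reduced counterparts $\mathcal{B}^{\mathbf{b},\mathbf{k}}_{\alpha(\eta),\mathcal{S},\tau,\tau',\vec{r},t}(\vec{f},g)$. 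Summing the two contributions yields the stated bound. Uniqueness follows verbatim from Theorem \ref{A21}: if $u_1,u_2$ are both solutions, then $v:=u_1-u_2$ satisfies $(-\Delta)^{\alpha(\eta)/2}v=0$, so $\mathcal{F}(v)\equiv 0$ on $\mathscr{S}'$ and hence $v=0$ almost everywhere.

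The main obstacle I expect lies in justifying the weighted Gagliardo--Nirenberg--Sobolev embedding required to pass from the $L^p(\omega^p)$ norm of the fractional Laplacian to the $L^q(\omega^q)$ norm of $u$ itself. Although this is standard within the classical Muckenhoupt framework, one must check that the single hypothesis $\omega^q\in A_q$ (combined with $\tfrac{1}{q}=\tfrac{1}{p}-\alpha(\eta)$) is strong enough to ensure that the associated Riesz potential maps $L^p(\omega^p)$ into $L^q(\omega^q)$; otherwise an additional Sawyer--Wheeden type two-weight condition would be needed and the two duality pairings (one against $L^{p'}(\omega^{-p'})$ and one against $L^{q'}(\omega^{-q'})$) could not be coordinated without loss. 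Once this weighted embedding is in place, the remainder of the argument is a direct reassembly of the sparse bounds from Sections \ref{MSFSDP} and \ref{Bloom.estimate.}.
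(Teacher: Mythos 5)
Your proposal follows essentially the same route as the paper: split the $W^{\alpha(\eta),q}(\omega^q)$ norm, treat the seminorm via \eqref{cha.W} together with the $L^{q'}(\omega^{-q'})$-duality and the sparse bound of Theorem \ref{SDP}, treat the $L^q$ piece by writing $u=(-\Delta)^{-\alpha(\eta)/2}\big(T^{\mathbf{b},\mathbf{k}}_{\alpha(\eta),\tau}(\vec f)\big)$ and using the weighted $L^p(\omega^p)\to L^q(\omega^q)$ mapping of the Riesz potential (your weighted Gagliardo--Nirenberg--Sobolev step is exactly this estimate), then dualize against $L^{p'}(\omega^{-p'})$, pass to the $\mathcal{B}$-forms by Proposition \ref{reduce}, and obtain uniqueness via the Fourier transform, precisely as in the paper. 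The caveat you raise about whether $\omega^q\in A_q$ alone suffices for that weighted embedding applies equally to the paper's own proof, which invokes the weighted Riesz-potential estimate without further justification, so your argument matches the paper's in both structure and level of rigor.
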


\begin{proof}

By \eqref{cha.W} and Theorem \ref{SDP}, we have
$${\left[ u \right]_{{W^{{\eta},q}}({\omega ^q})}} \approx {\left\| {{{( - \Delta )}^{\frac{\eta}{2}}}u} \right\|_{{L^q}({\omega ^q})}} \lesssim \mathop {\sup }\limits_{{{\left\| g \right\|}_{{L^{q'}}\left( {{\omega ^{ - q'}}} \right)}} = 1} \mathcal{A}_{\eta, \mathcal{S}, \tau, \vec{r}, s'}^{\mathbf{b}, \mathbf{k},\mathbf{t}} (\vec{f},g).$$

It follows from the weighted estimate of Riesz potential ${{{( - \Delta )}^{ - \frac{{\eta}}{2}}}}=I_{{\eta}}$ and Theorem \ref{SDP} that
\begin{align*}
{\left\| u \right\|_{{L^q}({\omega ^q})}} =& {\left\| {{{( - \Delta )}^{ - \frac{{\eta}}{2}}}(T_{\eta,\tau}^{{\bf b, k}}(\vec f))} \right\|_{{L^q}({\omega ^q})}} \le {\left\| {T_{\eta,\tau}^{{\bf b, k}}(\vec f)} \right\|_{{L^p}({\omega ^p})}}\\
\lesssim& \mathop {\sup }\limits_{{{\left\| g \right\|}_{{L^{p'}}\left( {{\omega ^{ - p'}}} \right)}} = 1} \mathcal{A}_{\eta, \mathcal{S}, \tau, \vec{r}, s'}^{\mathbf{b}, \mathbf{k},\mathbf{t}} (\vec{f},g) .
\end{align*}
The desired result derives from Proposition \ref{reduce}. The uniqueness follows immediately from the property of Fourier transform as before. We omit it here. \qedhere

\end{proof}

\vspace{1cm}
\noindent{\bf Acknowledgements } 
The author(s) would like to thank the editors and reviewers for careful reading and valuable comments, which lead to the improvement of this paper. 

\medskip 

\noindent{\bf Data Availability} Our manuscript has no associated data.

\medskip 
\noindent{\bf\Large Declarations}
\medskip 

\noindent{\bf Conflict of interest} The author(s) state that there is no conflict of interest.

\end{document}